\newtheorem{theorem}{\bf Theorem}[subsection]
\newtheorem{prop}[theorem]{\bf Proposition}
\newtheorem{lemma}[theorem]{\bf Lemma}
\newtheorem{definition}[theorem]{\bf Definition}
\theoremstyle{remark}
\newtheorem{example}[theorem]{\bf Example}
\theoremstyle{remark}
\newtheorem{rem}[theorem]{\bf Remark}
 \numberwithin{equation}{subsection}
\newcommand{\pointir}{\unskip . ---\ignorespaces\,}
\newcommand{\ie}{{\it i.e.\;}}
\newcommand{\ad}{\operatorname{ad}}
\def\go{\mathfrak}
\def\bb{\mathbb}
\def\C{\mathbb C}
\def\cal{\mathcal}
 \def\adots{\mathinner{\mkern2mu\raise1pt\hbox{.}
\mkern3mu\raise4pt\hbox{.}\mkern1mu\raise7pt\hbox{.}}}
 \title[Decomposition of  regular Prehomogeneous Vector Spaces]{Decomposition of reductive regular Prehomogeneous Vector Spaces}
\author{Hubert Rubenthaler}
\address
{Hubert Rubenthaler\\ Institut de Recherche Math\'ematique Avanc\'ee\\
Universit\'e de Strasbourg et CNRS\\
7 rue Ren\'e Descartes\\
67084 Strasbourg Cedex\\ France\\
E-mail: {\tt rubenth@math.unistra.fr}}
\begin{document}
 
\parindent=0pt

 \maketitle

\begin{abstract} Let $(G,V)$ be a regular prehomogeneous vector space (abbreviated  to $PV$), where $G$ is a connected reductive algebraic group over $\bb C$. If $V= \oplus_{i=0}^{n}V_{i}$ is a decomposition of $V$ into irreducible representations, then, in general, the PV's  $(G,V_{i})$ are no longer regular. In this paper  we introduce  the notion of quasi-irreducible $PV$ (abbreviated to  $Q$-irreducible), and show first that for completely $Q$-reducible $PV$'s, the $Q$-isotopic components are intrinsically defined, as in ordinary representation theory. We also show that, in an appropriate sense, any regular PV  is a direct sum of quasi-irreducible $PV$'s. Finally we classify the quasi-irreducible PV's of parabolic type.
  \end{abstract}
 
\maketitle

\medskip\medskip\medskip\medskip\medskip\medskip

 \section{Introduction}
 \vskip 10pt
 \subsection{}
Let us first recall that a prehomogeneous vector space (abbreviated to $PV$)  is a triplet  $(G,\rho, V)$ where $G$ is an algebraic group over $\bb C$, and $\rho$ is a rational representation of $G$ on the finite dimensional vector space $V$, such that $G$ has a Zariski open orbit in $V$.     The theory of PV's was created by Mikio Sato in the early 70's to provide generalizations of several kinds of known local or global zeta functions satisfying a  functional equation similar to that of the Mellin transform, the Riemann zeta function,  the Epstein zeta function or the zeta function of a simple algebra \cite{Sato-M}.

For the basic results on PV's we refer the reader to \cite{Sato-Kimura} and to \cite{Kimura}.

There are   many papers concerned with  local or global zeta functions of PV's and their functional equations. Among them 
 let us mention \cite{Sato-Shintani},  \cite{F-Sato-3}, \cite{Saito}, \cite{Bopp-Rubenthaler-AMS},\cite {F-Sato-4} for example.
 
   There are also many papers concerning the classification theory of $PV$'s. Many of them are written by T. Kimura and his students. We refer to the bibliography of \cite{Kimura} and to \cite{Kimura-2}, \cite{Kimura-3}, \cite{Kimura-4}, \cite{Kimura-5} for more details.  The  regular $PV$'s of parabolic type were classified in  \cite{Mortajine}.
 

         \vskip 10pt
\subsection{} In order to associate a zeta function to a  reductive $PV$ one needs a further condition on the $PV$, namely the so-called  {\it regularity}  condition (see section  2.1) Therefore  knowledge of the structure of the reductive regular $PV$'s as well as their classification is of particular interest. Unfortunately if $(G,V)$ is a non irreducible reductive regular $PV$, it can be seen in  easy examples (see example \ref {exemple1}) that the irreducible components of the representation $(G,V)$, which are still prehomogeneous, are in general not regular.   This makes    understanding  the structure of such $PV$'s   difficult.  To get around this difficulty we introduce the notion of quasi-irreducible $PV$ (abbreviated to $Q$-irreducible) and show that, in an appropriate sense,  any reductive regular $PV$ is a sum of $Q$-irreducible $PV$'s. 

\vskip 10pt
\subsection{}  

 Let us now describe the content of the paper.

  It is worthwhile pointing out  that usually the group $G$ of a $PV $ is supposed to be connected. For our purpose we do not make this hypothesis. 
Therefore in section 2.1 we begin by giving  extensions of basic results to the case where the group is not connected. 

In section 2.2 we  give the definition of $Q$-irreducible $PV$'s and  prove that, if $G$ is reductive and if  $(G,V)$ is a regular PV which is completely Q-reducible, then  the Q-isotypic components of $(G,V)$ are intrinsically  defined.

 In section 3 we  give our structure theorem for reductive regular $PV$'s which asserts  that, if $(G,V)$ is a regular reductive $PV$,  there exists a filtration of the space $V$:

$$\{0\}=U_{k+1}\subset U_{k}\subset\dots\subset U_{2}\subset U_{1}=V,$$ and  a filtration of the group $G$
$$  G_{k}\subset G_{k-1}\subset\dots\subset G_{1}=G,$$
such that  the $G_{i}'s$ are reductive and the $U_{i}$ and $U_{i+1}$ are $G_{i}$-stable. Moreover $(G_{i}, U_{i})$ is a regular PV and $(G_{i},U_{i}/U_{i+1})$ is completely $Q$-reducible, for $i=1,\dots,k$. See Theorem \ref{th-decomposition}  below for the precise statement.

In section 4.1 we  give a brief account of the theory of parabolic $PV$'s, and in section 4.2  we  give the complete classification of regular $Q$-irreducible $PV$'s.

\subsection{Acknowledgement} I obtained the results of this paper a long time ago, but  never published them. I would like to thank Tatsuo Kimura for  the recent stimulating conversations about  classification theory of $PV$'s which convinced me to write them up.

\section{Completely $Q$-reducible regular PV's}

\subsection{The regularity for non connected reductive groups}
 As said in the Introduction a prehomogeneous vector space    is a triplet  $(G,\rho, V)$ where $G$ is an algebraic group over $\bb C$, and $\rho$ is a rational representation of $G$ on the finite dimensional vector space $V$, such that $G$ has a Zariski open orbit in $V$.   The open orbit is usually denoted by $\Omega$   and $S=V\setminus \Omega$ is the {\it singular set}. The elements in the open orbit are called ${\it generic}$. We often simply write $(G,V)$ for a $PV$ when we do not need to make the representation explicit. A {\it relative invariant} of the $PV$ $(G,V)$ is a rational function $f$ on $V$, such that there exists a rational character $\chi $ of $G$, such that for all $x\in \Omega$ and all $g \in G$, one has $f(g.x)=\chi(g)f(x)$. The character $\chi$ determines $f$ up to a multiplicative constant. The subgroups we shall consider in the sequel are isotropy subgroups. These will be reductive, but not necessarily connected. Therefore we need to extend slightly the basic results concerning the  regularity.

\begin{prop}\label{prop.PV-non-connexe} Let $(G,V)$ be a PV, where $G$ is not necessarily connected and not necessarily reductive.  Let $G^\circ$ be the connected component group of $G$. Denote by $\Omega$ the open orbit under $G^\circ$ and define $S=V\setminus \Omega$.  Let $S_{1},\dots, S_{k}$ be the irreducible components of codimension one in $S$. Let $f_{1},f_{2},\dots,f_{k}$ be irreducible polynomials such that
$$S_{i}=\{x\in V | f_{i}(x)=0\}.$$
The $f_{i}$'s are (as well known) the fundamental relative invariants of $(G^\circ, V)$. 

Then:

{\rm1)} $\Omega$ is also the open $G$-orbit.

{\rm 2)} For any $g\in G$ and for any $i \in \{1,\dots,k\}$, there exists $\sigma^g(i) \in \{1,\dots,k\}$ and a non zero contant $c(i,g)$ such that  for all $x\in V$, one has $ f_{i}(g.x)= c(i,g)f_{\sigma^g(i)}(x)$. Therefore the group $G$ acts by permutations on the set of lines  $\{{\bb C}f_{i}, i=1,\dots,k\}$.

{\rm 3)} Let $I_{1}\cup I_{2}\cup \dots\cup I_{r}=\{1,2,\dots,k\}$ be the partition defined by the $G$-action on the lines  ${\bb C}f_{i}$. Define $\varphi_{j}=\prod_{i\in I_{j}}f_{i}$. Then $\varphi_{j}$ is a relative invariant under $G$. Any relative invariant $\varphi$ under $G$ can be uniquely written in the following way:
$$\varphi= c \varphi_{1}^{m_{1}}\varphi_{2}^{m_{2}}\dots\varphi_{r}^{m_{r}}$$
where $m_{j}\in {\bb Z}$ and $c\in {\bb C}$.

\end{prop}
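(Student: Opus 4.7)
My plan for (1) is to exploit the normality of $G^\circ$ in $G$. Since $g^{-1}G^\circ g=G^\circ$ for any $g\in G$, the translate $g\cdot\Omega$ is again a Zariski-open dense $G^\circ$-orbit; by uniqueness of such an orbit, $g\cdot\Omega=\Omega$. Hence $\Omega$ is already $G$-stable, and coincides with $G\cdot x$ for any $x\in\Omega$.

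Part (2) will follow quickly from (1). Since $\Omega$ is $G$-stable, so is its complement $S$, and each $g\in G$ acts linearly on $V$, thereby sending an irreducible hypersurface component $S_i$ to an irreducible hypersurface component, which I name $S_{\sigma^g(i)}$. The polynomial $x\mapsto f_i(g\cdot x)$ is then irreducible of the same degree as $f_{\sigma^g(i)}$ and has the same zero locus, so unique factorization in $\bb C[V]$ yields $f_i(g\cdot x)=c(i,g)f_{\sigma^g(i)}(x)$ with $c(i,g)\in\bb C^{*}$. A short check on $f_i((gh)\cdot x)$ shows that $\sigma^g$ is a genuine permutation, that $\sigma^{gh}=\sigma^h\circ\sigma^g$, and that $c$ is a cocycle; this justifies the permutation action on the lines $\bb C f_i$.

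For (3), first $\varphi_j$ is a $G$-relative invariant because by (2), any $g\in G$ permutes $\{f_i\mid i\in I_j\}$ up to scalars, so $\varphi_j(g\cdot x)=\bigl(\prod_{i\in I_j}c(i,g)\bigr)\varphi_j(x)$. Conversely, any $G$-relative invariant $\varphi$ with character $\chi$ is a fortiori $G^\circ$-relative invariant, so by the fundamentality of the $f_i$ over $G^\circ$ it factors as $\varphi=c\prod_i f_i^{n_i}$ with $n_i\in\bb Z$. Computing $\varphi(g\cdot x)$ via (2) and comparing with $\chi(g)\varphi(x)$ using unique factorization forces $n_{\sigma^g(i)}=n_i$ for all $g$ and $i$; equivalently, the exponent $n_i$ is constant on each $G$-orbit $I_j$. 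Setting $m_j$ equal to this common value produces the desired formula, whose uniqueness again follows from unique factorization in $\bb C[V]$.

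The only obstacle worth mentioning is the bookkeeping in part (2): one must verify that $i\mapsto\sigma^g(i)$ is well-defined and that the scalars $c(i,g)$ assemble into a consistent cocycle, which in turn rests on the $f_i$ being pairwise non-associate irreducible polynomials, a property built into their definition as the fundamental relative invariants of $(G^\circ,V)$. Once this is in place everything else is essentially formal.
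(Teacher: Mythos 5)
Your proposal is correct and follows essentially the same route as the paper: part (1) via normality of $G^\circ$ and uniqueness of the open dense $G^\circ$-orbit, and part (3) by reducing a $G$-relative invariant to its $G^\circ$-factorization $c\prod f_i^{n_i}$ and forcing $n_{\sigma^g(i)}=n_i$. The only (cosmetic) difference is in part (2), where you obtain $f_i(g\cdot x)=c(i,g)f_{\sigma^g(i)}(x)$ from the permutation of the codimension-one components of $S$ together with unique factorization, whereas the paper observes that $x\mapsto f_i(g\cdot x)$ is an irreducible $G^\circ$-relative invariant (with character $\chi_i(g\cdot g^{-1})$) and invokes the correspondence with the fundamental relative invariants; both arguments are valid and rest on the same underlying facts.
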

\begin{proof}\hfill 

1) Let $\Omega$ be the open $G^\circ$-orbit of $V$. Let us  prove first that for any $g\in G$ the set  $g.\Omega $ is a $G^\circ$-orbit. Let $u=g.x$ and $v=g.y$ ($x,y \in \Omega$) be two elements in $g.\Omega$. By definition there exists $h\in G^\circ$ such that $x=h.y$. Therefore
$$u=g.x=gh.y=ghg^{-1}g.y= h'g.y=h'.v $$
  (where  $ h'= ghg^{-1}  \in  G^\circ$,   because $G^\circ$  is  a normal subgroup of $G$). As $g.\Omega$ is open, we have $g.\Omega=\Omega$, for all $g\in G$. Hence $\Omega  $ is also the open $G$-orbit.
  
  2) Denote by $\chi_{i}$ the $G^\circ$ character of $f_{i}$. For $g\in G$ and $x\in V$, define $\psi_{i}^g(x)=f_{i}(g.x)$. Then for $h\in G^\circ$ we have $\psi_{i}^g(h.x)=f_{i}(gh.x)=f_{i}(ghg^{-1}g.x)=\chi_{i}(ghg^{-1}) \psi_{i}^g(x)$. Therefore $\psi_{i}^g$ is an irreducible relative invariant of $G^\circ$. Hence there exists $\sigma^g(i) \in \{1,\dots,k\}$ and a non zero contant $c(i,g)$ such that  for all $x\in V$, one has $\varphi_{i}(x)= f_{i}(g.x)= c(i,g)f_{\sigma^g(i)}(x)$ .   
  
  3)   Let $\varphi_{j}$ as defined above. Let $g\in G$. One has $\varphi_{j}(g.x)= \prod_{i\in I_{j}}f_{i}(g.x)=(\prod_{i\in I_{j}}c(i,g))\varphi_{j}(x)$. Hence $\varphi_{j}$ is a relative invariant under $G$, with character $\widetilde{ \chi}_{j}(g)=(\prod_{i\in I_{j}}c(i,g))$. Let $\varphi$ be a relative invariant under $G$. Let $\chi_{\varphi}$  be the corresponding $G$ character. As $\varphi$ is a relative invariant under $G^\circ$, one has $\varphi=c\prod _{i=1}^k f_{i}^{n_{i}}$, where $c\in {\bb C}$ and where $n_{i} \in {\bb Z}$.
  We  have, for $g\in G$ and $x\in \Omega$:
  $$\varphi(g.x)=c\chi_{\varphi}(g)\prod_{i=1}^k f_{i}^{n_{i}}(x)= c\prod_{i=1}^k f_{i}^{n_{i}}(g.x) =c'\prod _{i=1}^kf_{\sigma^g(i)}^{n_{i}}(x)  \quad (c' \in {\bb C})$$
  Therefore from the uniqueness  of the decomposition for $G^\circ$ relative invariants, we obtain that for every $g\in G$ we have $n_{\sigma^g(i)}=n_{i}$. Hence the powers $n_{i}$ of the $f_{i}$'s in the same subset $I_{j}$, are the same, say  $m_{j}$. This implies that
  $$\varphi= c \varphi_{1}^{m_{1}}\varphi_{2}^{m_{2}}\dots\varphi_{r}^{m_{r}}.$$

\end{proof}
\begin{rem}

Of course all the $f_{i}$ where $i\in I_{j}$ have the same degree.
\end{rem}

\begin{definition} Let $(G, V)$  be a PV where $G$ is a reductive, non necessarily connected, algebraic group. The PV $(G,V)$ is called regular if there exists a relative invariant $f$ such that $  \frac{df}{f}=gradlog(f): \Omega \longrightarrow V^*$ is generically surjective $($i.e. has a Zariski dense image$)$. Such a relative invariant is said to be {\it nondegenerate}.
\end{definition} 

\begin{prop}  \label{prop.PV-regulier-non-connexe}{\rm (Compare with \cite{Kimura}, Th. 2.28), and \cite{Sato-Kimura}, Remark 11 p.64)}\hfill  

Let $G$ be a reductive algebraic group. Let $G^\circ$ be the connected component group of $G$ and suppose that $(G,V)$ is a  PV.  
 
 The following conditions are equivalent:

{\rm i)} $(G,V)$ is regular.

{\rm ii)} There exists a relative invariant $f$ such that   the   Hessian $H_{f}(x)=\text{Det}(\frac{\partial^2 f }{\partial x_{i}\partial x_{j}}(x) )$ is not identically zero

{\rm iii)}  The singular set  $S$ is a hypersurface.

{\rm iv)} The open orbit $\Omega= V\setminus S$ is an affine variety.

{\rm v)} Each generic isotropy subgroup is reductive.

{\rm vi)} Each generic isotropy subalgebra is reductive.

Suppose moreover that these conditions hold. Then any polynomial $f$ satisfying  $S=\{x\in V| f(x)=0\}$ is a nondegenerate relative invariant of $G^\circ$. In the notations of Proposition \ref{prop.PV-non-connexe}  the set of such   polynomials which are relative invariants under $G$ is the set of polynomials of the form
$$f= c\varphi_{1}^{m_{1}}\varphi_{2}^{m_{2}}\dots\varphi_{r}^{m_{r}}$$
where $m_{j}\in {\bb N}^*$ and $c\in {\bb C}^*$.

\end{prop}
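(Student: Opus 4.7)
The strategy is to reduce each of the six conditions for the possibly disconnected $G$ to its counterpart for the identity component $G^\circ$, for which the equivalences are classical (\cite{Kimura}, Th.~2.28, and \cite{Sato-Kimura}, Remark 11 p.~64). The bridge is Proposition \ref{prop.PV-non-connexe}: part 1 identifies the open $G$-orbit with the open $G^\circ$-orbit, and part 3 describes all $G$-relative invariants in terms of the products $\varphi_j=\prod_{i\in I_j}f_i$.

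First I would dispose of the conditions insensitive to the finite extension $G^\circ\subset G$. By Proposition \ref{prop.PV-non-connexe}(1), $G$ and $G^\circ$ share the same open orbit $\Omega$ and hence the same singular set $S$, so iii) and iv) for $G$ are verbatim the corresponding assertions for $G^\circ$. For v) and vi), at any $x\in\Omega$ the subgroup $G^\circ_x=G_x\cap G^\circ$ is normal of finite index in $G_x$ and has the same Lie algebra; since $G$ is reductive, $G_x$ is reductive if and only if $G^\circ_x$ is. Thus v), vi) for $G$ also reduce to the connected case.

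Next, for conditions i) and ii) which genuinely involve relative invariants of the whole of $G$, I would exploit the product $\Phi=\varphi_1\varphi_2\cdots\varphi_r$, which by Proposition \ref{prop.PV-non-connexe}(3) is a $G$-relative invariant whose zero locus is exactly $S$. Assuming the equivalent conditions hold for $G^\circ$, the classical theorem asserts that every polynomial vanishing exactly on $S$ is automatically a nondegenerate relative invariant of $G^\circ$, with nonvanishing Hessian. Applied to $\Phi$, this yields both i) and ii) for $G$, because nondegeneracy of $d\Phi/\Phi$ and nonvanishing of the Hessian are intrinsic properties of the polynomial $\Phi$, making no reference to the group under which it is a relative invariant. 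Conversely, any relative invariant of $G$ is a fortiori a relative invariant of $G^\circ$, so i) and ii) for $G$ each imply their $G^\circ$-analogues, completing the circle of equivalences via the connected case.

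The final characterization of polynomials $f$ with $S=\{f=0\}$ that are $G$-relative invariants follows by combining Proposition \ref{prop.PV-non-connexe}(3), which writes any such $G$-invariant as $c\,\varphi_1^{m_1}\cdots\varphi_r^{m_r}$ with $m_j\in\bb Z$, with the condition $\{f=0\}=S=\bigcup_i\{f_i=0\}$, which forces every $m_j\in\bb N^*$ and $c\in\bb C^*$. The only delicate point I anticipate is the passage from a nondegenerate $G^\circ$-invariant to a $G$-invariant one: this is handled cleanly by working with $\Phi$ from the outset, since the $\varphi_j$ are $G$-invariant by the very construction of Proposition \ref{prop.PV-non-connexe}, so the symmetrization is built in and no ad hoc averaging is required.
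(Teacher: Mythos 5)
Your proposal is correct and follows essentially the same route as the paper: both reduce everything to the connected case via Proposition \ref{prop.PV-non-connexe} (same open orbit and singular set, every $G$-relative invariant is a $G^\circ$-relative invariant, and the $G$-invariant product $\varphi_1\cdots\varphi_r$ defining $S$ supplies the nondegenerate invariant in the converse direction). The paper merely organizes the equivalences as a cycle ${\rm i)}\Rightarrow{\rm iii)}\Rightarrow{\rm iv)}\Rightarrow{\rm v)}\Rightarrow{\rm vi)}\Rightarrow{\rm i)}$ with ${\rm i)}\Leftrightarrow{\rm ii)}$ handled separately, rather than condition by condition as you do.
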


\begin{proof}\hfill 

  We will of course use the same result for connected reductive groups (\cite{Kimura}, Th. 2.28. and \cite{Sato-Kimura})
  
  First of all we remark that by the same proof as in the case where the group is connected (see \cite{Sato-Kimura}, Proposition 10 p.62 and Remark 11 p. 64) we obtain ${\rm i) \Leftrightarrow {\rm ii)}}$.
 
  i) $\Rightarrow$ iii):  If $(G,V)$ is regular, there exists a nondegenerate relative invariant $f$. This function is also a relative invariant of $(G^\circ, V)$, hence the singular set for the $G^\circ$ action is an hypersurface. But the singular set for $G$ is the same as for $G^\circ$, from  Proposition \ref {prop.PV-non-connexe}. Assertion iii) is proved.
  
  iii) $\Rightarrow$ iv): This is classical: the complementary set of a hypersurface is allways an affine variety.
  
   iv) $\Rightarrow$ v):  From \cite{Kimura}, Th. 2.28, we know that for $x\in \Omega$, the isotropy subgroup $G^\circ_{x}$ is reductive. Hence the isotropy subgroup $G_{x}$ is reductive.
   
    v) $\Rightarrow$ vi): As  the Lie algebras of $G^\circ$ and of $G$ are the same, this is obvious.
    
     vi) $\Rightarrow$ i): Let $S_{1}, \dots, S_{m}$ be the irreducible components of $S$. They correspond to irreducible polynomials $f_{1},\dots,f_{m}$    which are the fundamental relative invariants for $G^\circ$. We know from   \cite{Kimura}, Th. 2.28  that if vi) is satisfied then $(G^\circ,V)$ is regular and therefore any polynomial $f$  such that $S=\{x\in V| f(x)=0\}$ is a nondegenerate relative invariant under $G^\circ$. Among them the functions which are relative invariants under  $G$ are of the proposed form from Proposition  \ref{prop.PV-non-connexe}.  Hence $(G,V)$ is regular and i) is true.

    \end{proof}

  \begin{rem}
 Under the assumptions of the preceding Proposition, the polynomial $f=f_{1}f_{2}\dots f_{k}=\varphi_{1}\varphi_{2} \dots \varphi_{r}$ is the unique polynomial of minimal degree which defines  $S$. It is a relative invariant under $G$.
  
  \end{rem}

\subsection{Quasi-irreducible PV's and complete Q-reducibility}\hfill 

 \vskip 3pt

The following result is often very useful.
\begin{prop}\label {prop-components} Let $(G,V)$ be a PV. Here we do not  suppose that $G$ is connected and we do not  suppose that $G$ is reductive. Suppose that $V=V_{1}\oplus V_{2}$ where $V_{1}$ and $V_{2}$ are two non trivial $G$-invariant subspaces of $V$. Denote by $p_{1}$ $($resp. $p_{2}$$)$ the projections on $V_{1}$ $($resp. $V_{2}$$)$ defined by this decomposition.

{\rm 1)} The representations $(G,V_{1})$ and $(G,V_{2})$ are PV's.  Moreover the open orbits in $V_{1}$ (resp. $ V_{2}$) are the projections of $\Omega_{V}$ i.e. $\Omega_{V_{i}}=p_{i}(\Omega_{V}), i=1,2$.

{\rm 2)} Let $x_{0}+y_{0}$ be a generic element of $(G,V)$, with $x_{0}\in V_{1}$ and $y_{0 }\in V_{2}$. Let also $G_{x_{0}}$ $($resp. $G_{y_{0}}$$)$ be the isotropy subgroup of $x_{0}$ $($resp. $y_{0}$$)$. Then $(G_{y_{0}}, V_{1})$ and $(G_{x_{0}}, V_{2})$ are PV's, and $x_{0}$ is generic in $(G_{y_{0}}, V_{1})$ and $y_{0}$ is generic in $(G_{x_{0}}, V_{2})$.

{\rm 3)} One has $G_{x_{0}}\cap G_{y_{0}}=G_{x_{0}+y_{0}}$.  The open $G_{y_{0}}$-orbit in $V_{1}$ is equal to $\Omega_{V_{1}}(y_{0})=\{x\in V_{1}, x+y_{0}\in \Omega_V\}$ and the open $G_{x_{0}}$-orbit in $V_{2} $ is equal to  $\Omega_{V_{2}}(x_{0})= \{y\in V_{2}, x_{0}+y \in \Omega_V\}$.

{\rm 4)} The subgroup $\widetilde G$  generated by $G_{x_{0}}$ and $G_{y_{0}}$ is open, and hence closed, therefore we he have $\widetilde G=G$ if $G$ is connected. More precisely the subset  $G_{x_{0}}.G_{y_{0}}$ is open in $G$.

{\rm 5)} Suppose that $G$ is reductive, and that $(G,V)$ and $(G,V_{1})$ are regular. Then $(G_{x_{0}}, V_{2})$ is a regular reductive $PV$.

\end{prop}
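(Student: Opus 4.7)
The plan is to establish parts (1)--(4) from $G$-equivariance of the projections $p_1,p_2$ combined with a single Lie algebra dimension count, and then to deduce (5) directly from Proposition \ref{prop.PV-regulier-non-connexe}.

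For (1), each projection $p_i$ is $G$-equivariant and, being linear and surjective, is an open map; hence $p_i(\Omega_V)=G.p_i(x_0+y_0)$ is at once open and a single $G$-orbit in $V_i$, so it coincides with the unique open $G$-orbit $\Omega_{V_i}$. This already gives $\dim G.x_0=\dim V_1$ and $\dim G.y_0=\dim V_2$, i.e.\ $\dim\mathfrak{g}_{x_0}=\dim\mathfrak{g}-\dim V_1$ and symmetrically for $y_0$. For (2), I look at the $G$-equivariant morphism $\Omega_V\to G.y_0$ sending $g.(x_0+y_0)\mapsto g.y_0$; its fibre over $y_0$ is $\{g.x_0+y_0:g\in G_{y_0}\}$ and has dimension $\dim V-\dim V_2=\dim V_1$, so $G_{y_0}.x_0$ is open in $V_1$ and $x_0$ is generic in $(G_{y_0},V_1)$. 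The statement for $(G_{x_0},V_2)$ is symmetric.

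For (3), the $G$-invariance of each $V_i$ forces the equation $g.(x_0+y_0)=x_0+y_0$ to split as $g.x_0=x_0$ and $g.y_0=y_0$, whence $G_{x_0+y_0}=G_{x_0}\cap G_{y_0}$. The set $\Omega_{V_1}(y_0)$ is the $G_{y_0}$-stable open preimage of $\Omega_V$ under the translation $x\mapsto x+y_0$; any $x$ in it satisfies $x+y_0=g.(x_0+y_0)$ for some $g\in G$, which forces $g\in G_{y_0}$ and $g.x_0=x$, so $\Omega_{V_1}(y_0)=G_{y_0}.x_0$, and symmetrically for $V_2$. For (4), the dimension formulas from (1)--(3) give
\[
\dim\mathfrak{g}_{x_0}+\dim\mathfrak{g}_{y_0}-\dim(\mathfrak{g}_{x_0}\cap\mathfrak{g}_{y_0})=\dim\mathfrak{g},
\]
hence $\mathfrak{g}_{x_0}+\mathfrak{g}_{y_0}=\mathfrak{g}$. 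Therefore the multiplication map $G_{x_0}\times G_{y_0}\to G$ is submersive at $(e,e)$, so its image contains an open neighbourhood $U$ of $e$; for any $gh\in G_{x_0}.G_{y_0}$, the translate $gUh$ is an open neighbourhood of $gh$ contained in $G_{x_0}.G_{y_0}$, which therefore is open in $G$. The subgroup $\widetilde G$ it generates is then open, and open subgroups are automatically closed.

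Finally, for (5), I invoke Proposition \ref{prop.PV-regulier-non-connexe} twice. Since $(G,V_1)$ is regular reductive and $x_0$ is generic in it, criterion (v) gives $G_{x_0}$ reductive. By (2), $y_0$ is generic in the PV $(G_{x_0},V_2)$, and by (3) its isotropy is $G_{x_0}\cap G_{y_0}=G_{x_0+y_0}$. From the regularity of the reductive PV $(G,V)$, criterion (v) again makes $G_{x_0+y_0}$ reductive, so the generic isotropy of $(G_{x_0},V_2)$ is reductive, and (v)$\Rightarrow$(i) delivers the regularity of $(G_{x_0},V_2)$. The main obstacle I anticipate is the openness argument in (4); beyond the submersivity-plus-translation trick sketched above, everything reduces to bookkeeping with the $G$-equivariant projections and the generic-isotropy criterion of Proposition \ref{prop.PV-regulier-non-connexe}.
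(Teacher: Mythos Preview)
Your argument is correct. Parts (1), (2), (3), and (5) follow the same lines as the paper, with only cosmetic differences: for (2) you phrase the dimension count via fibres of the equivariant projection $\Omega_V\to\Omega_{V_2}$, whereas the paper subtracts $\dim G_{x_0+y_0}$ from $\dim G_{x_0}$ directly, but these are the same computation.

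The genuine divergence is in (4). The paper argues geometrically inside $V$: it introduces the open set $\mathcal{O}=(\Omega_{V_1}(y_0)\oplus\Omega_{V_2}(x_0))\cap\Omega_V$ and shows, by successively moving the $V_1$- and $V_2$-components, that every point of $\mathcal{O}$ lies in $(G_{x_0}\cdot G_{y_0})\cdot(x_0+y_0)$; openness of $G_{x_0}\cdot G_{y_0}$ in $G$ then follows by pulling back along $G\to G/G_{x_0+y_0}\simeq\Omega_V$. You instead compute $\mathfrak{g}_{x_0}+\mathfrak{g}_{y_0}=\mathfrak{g}$ from the dimension identities already established, deduce that multiplication $G_{x_0}\times G_{y_0}\to G$ is submersive at $(e,e)$, and propagate openness by left-right translation. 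Your route is shorter and is the standard transversality argument for products of algebraic subgroups; the paper's route has the advantage of exhibiting an explicit open piece of $\Omega_V$ on which $G_{x_0}\cdot G_{y_0}$ already acts transitively, which is in keeping with the orbit-theoretic flavour of the rest of the proposition.
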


\begin{proof}\hfill 
 
1) As the projections $p_{1}$ and $p_{2}$ are open maps, the sets $\Omega_{V_{i}}=p_{i}(\Omega_{V}), i=1,2$ are open. Let $x_{1}$ and $x_{2}$ be two elements in $\Omega_{V_{1}}$. From the definition there exists $y_{1}$ and $y_{2}$ in $V_{2}$ such that $x_{1}+y_{1}$ and $x_{2}+y_{2}$ belong to $\Omega_{V}$. Therefore there exists $g\in G$ such that $g.(x_{1}+y_{1})=x_{2}+y_{2}$. Hence $g.x_{1}=x_{2}$. Hence two elements in $\Omega_{V_{1}}$ are congugate. Conversely the conjugate of an element in $\Omega_{V_{1}}$ is still in $\Omega_{V_{1}}$. This proves the first assertion for $V_{1}$. The argument for $V_{2}$ is the same.

2) Define $n=\dim V$, $n_{1}=\dim V_{1}$, $n_{2}=\dim V_{2}$. As $(G,V)$ is prehomogeneous, we have $n=\dim G - \dim G_{x_{0}+y_{0}}$ and as $(G,V_{1})$ is also prehomogeneous we have $n_{1}=\dim G -\dim G_{x_{0}}$. Therefore
 $$\begin{array}{lll}
 n=n_{1}+n_{2}&=& \dim G-\dim G_{x_{0}+y_{0}}\\ 
{}&=&\dim G-\dim G_{x_{0}}+\dim G_{x_{0}}-\dim G_{x_{0}+y_{0}}\\
&=&  n_{1}+\dim G_{x_{0}}-\dim G_{x_{0}+y_{0}}.
\end{array}$$

Therefore $n_{2}=\dim G_{x_{0}}-\dim G_{x_{0}+y_{0}}$ and as $G_{x_{0}+y_{0}}=(G_{x_{0}})_{y_{0}}$ is the isotropy subgroup of $y_{0}$ in $G_{x_{0}}$, the representation $(G_{x_{0}}, V_{2})$ is prehomogeneous, and $y_{0}$ is generic for this space. 

3)  The assertion $G_{x_{0}}\cap G_{y_{0}}=G_{x_{0}+y_{0}}$ is obvious. It is clear that $\Omega_{V_{2}}(x_{0})= \{y\in V_{2}, x_{0}+y \in \Omega_V\}$ is stable under $G_{x_{0}}$. Moreover if  $y_{1}, y_{2 }\in \Omega_{V_{2}}(x_{0})$, then $x_{0}+y_{1}, x_{0}+y_{2}\in \Omega_{V}$ and there exists $g\in G$ such that $g(x_{0}+y_{1})=x_{0}+y_{2}$, and hence $g\in G_{x_{0}}$ and $g y_{1}=y_{2}$. This proves that the open $G_{x_{0}}$-orbit in $V_{2}$ is $\Omega_{V_{2}}(x_{0)}$.  The proof for the space $(G_{y_{0}},V_{2})$ is symmetric.

4) Consider the set ${\cal O}=( \Omega_{V_{1}}(y_{0})\oplus \Omega_{V_{2}}(x_{0})\cap \Omega_{V}$. This set is nonempty ($x_{0}+y_{0}\in {\cal O})$ and open. Let $x+y\in {\cal O}$. Then $x\in \Omega_{V_{1}}(y_{0})$ and we know from the third assertion that there exists $g_{1}\in G_{y_{0}}$ such that $g_{1}x=x_{0}$. Hence $g_{1}(x+y)=x_{0}+g_{1}y$. As $x+y \in \Omega_{V}$, we have also $x_{0}+g_{1}y\in \Omega_{V}$. Hence $g_{1}y\in \Omega_{V_{2}}(x_{0})$. Then we know that there exists $g_{2}\in G_{x_{0}}$ such that $g_{2}g_{1}y=y_{0}$. Hence $g_{2}g_{1}(x+y)=x_{0}+y_{0}$. Therefore the elements of ${\cal O}$ are conjugate under the set $G_{x_{0}}.G_{y_{0}}$. Hence $G_{x_{0}}.G_{y_{0}}/G_{x_{0}+y_{0}}\simeq {\cal O}$ is an open subset of $G/G_{x_{0}+y_{0}}\simeq \Omega$. This implies that $G_{x_{0}}.G_{y_{0}}$ is open in $G$. Therefore the group $\widetilde G$ generated by $G_{x_{0}}$ and $G_{y_{0}}$  is open and hence closed. If G is connected, then $\widetilde G= G$

5) From Proposition \ref{prop.PV-regulier-non-connexe} we know that $G_{x_{0}}$ is reductive and from Proposition \ref{prop-components} we know that $(G_{x_{0}},V_{2})$ is a $PV$. As $(G_{x_{0}})_{y_{0}}= G_{x_{0}+y_{0}}$, using again Proposition \ref {prop.PV-regulier-non-connexe},  we obtain that $(G_{x_{0}},V_{2})$ is regular.

\end{proof}
Unfortunately the irreducible components of a reductive regular $PV$ are in general not regular as shown by the following example.
\begin{example}\label{exemple1}\hfill{}

\noindent Let $G= {\bb C}^*\times SL_{n}\times {\bb C}^*$, let $V={\bb C}^n\times {\bb C}^n$ and define $\rho$ as follows:
$$\rho(x,g,y)(v,w)=(x{^t}vg^{-1},y^{-1}gw)$$
where $x,y \in {\bb C}*$, $g\in SL_{n}$, where $v,w \in {\bb C}^n$ are considered as column matrices and where ${^t}v$  is the transpose of the vector $v$.    A simple computation shows that if $v_{0}=w_{0}= {^t}(1,0,\dots,0)$ then the isotropy subgroup is the set of triplets $(x,\begin{pmatrix}x&0\\
0&A
\end{pmatrix}, x)$, where $A\in GL_{n-1}$, and such that $x. {\text {Det}}A=1$, and this proves that $(G,\rho,V)$ is a regular PV. In fact the scalar product $Q(v,w)={^t}v.w$ of $v$ and $w$ is the unique relative invariant. The irreducible components are $V_{1}={\bb C}^n\times \{0\}$ and $V_{2}=\{0\}\times {\bb C}^n$, and the PV's $(G, \rho_{|_{V_{i}}},V_{i})$ ($i=1,2$) are obviously not regular.
\end{example}
\vskip 5pt

The following lemma is also useful in the sequel.
\begin{lemma}\label{lemma-extension-restr-invariants}
Let $(G,V)$ be a $PV$ where $G$ is not necessarily connected and not  necessarily reductive and suppose that $V=V_{1}\oplus V_{2}$ where $V_{1}$ and $V_{2}$ are $G$-invariant subspaces.  

{\rm a)} Let $f$ be a relative invariant of $(G,V_{1})$. Then the function $\tilde f$ defined by $\tilde f (x+y)=f(x)$ $(x\in V_{1}, Y\in V_{2})$ is a relative invariant of $(G,V)$ with the same character as $f$.

{\rm b)} Let $f$ be a relative invariant of $(G,V)$ which is defined and nonzero on an open subset of $V_{1}$, then for $x\in V_{1}, y\in V_{2} $, we have $f(x+y)=f(x)$.
\end{lemma}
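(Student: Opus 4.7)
For part (a), I would argue almost directly from the definitions. Since both $V_{1}$ and $V_{2}$ are $G$-stable, the linear projection $p_{1}: V\to V_{1}$ is $G$-equivariant, and $\tilde f = f\circ p_{1}$. Rationality of $\tilde f$ is automatic, and for $g\in G$ and $v=x+y$ with $x\in V_{1}$, $y\in V_{2}$, one has $g.v = g.x+g.y$ with $g.x\in V_{1}$, so $\tilde f(g.v)=f(g.x)=\chi(g)f(x)=\chi(g)\tilde f(v)$, where $\chi$ is the character of $f$. So $\tilde f$ is a relative invariant of $(G,V)$ with the same character as $f$.

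For part (b), my plan is to compare $f$ with the extension of its own restriction to $V_{1}$. Let $\chi$ denote the character of $f$. Because $V_{1}$ is $G$-stable and $f$ is defined and nonzero on an open subset $U$ of $V_{1}$, the restriction $f_{1}:=f|_{V_{1}}$ is a well-defined nonzero rational function on $V_{1}$, and for generic $x\in V_{1}$ and $g\in G$ one still has $f_{1}(g.x)=\chi(g)f_{1}(x)$ (the relation holds on $V$, and both sides lie in $V_{1}$ by $G$-stability). Hence $f_{1}$ is a relative invariant of $(G,V_{1})$ with character $\chi$. Applying part (a) to $f_{1}$, I obtain a relative invariant $\tilde f_{1}$ of $(G,V)$ with the same character $\chi$, explicitly $\tilde f_{1}(x+y)=f_{1}(x)$.

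Now consider the ratio $h:=f/\tilde f_{1}$. Since $f$ and $\tilde f_{1}$ transform under $G$ by the same character $\chi$, the function $h$ is a $G$-invariant rational function on $V$. The decisive step is then: since $(G,V)$ is a PV, its open $G^{\circ}$-orbit $\Omega$ is Zariski dense, so any $G^{\circ}$-invariant rational function is constant on $\Omega$ and therefore globally constant as a rational function. On the open subset $U\subset V_{1}$ one has $\tilde f_{1}|_{U}=f_{1}=f|_{U}$, so $h\equiv 1$ on $U$, which forces the constant value to be $1$. Therefore $f=\tilde f_{1}$ as rational functions on $V$, which is exactly the desired identity $f(x+y)=f(x)$ for $x\in V_{1}$, $y\in V_{2}$. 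The only mildly delicate point is the assertion that a $G$-invariant rational function on a PV is constant; this is standard and follows from the density of the open orbit, so I do not expect it to be a genuine obstacle.
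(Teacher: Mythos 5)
Your proposal is correct and follows essentially the same route as the paper: part (a) is the same direct computation, and in part (b) you extend the restriction $f|_{V_{1}}$ by part (a) and compare it with $f$; your ratio argument $h=f/\tilde f_{1}$ is just the standard proof of the fact (already recorded in Section 2.1 and invoked by the paper) that a relative invariant is determined by its character up to a multiplicative constant, with the constant pinned down to $1$ on the open set $U\subset V_{1}$ exactly as in the paper.
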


\begin{proof}\hfill 

a) Let $\chi_{f}$ be the character of $f$. For $g\in G$, we have: 
$$\tilde f(g.x+g.y)=f(g.x)=\chi_{f}(g)f(x)=\chi_{f}(g)\tilde f(x+y).$$

b) For $x\in V_{1}, y\in V_{2} $ let us set $\tilde f (x+y)=f(x)$. From a) we know that $\tilde f$ is a relative invariant of $(G,V)$ with the character as $f$. Therefore there exists a constant $c\in {\bb C}$ such that $\tilde f= c.f$. But as $\tilde f=f$ on $U$, we   have necessarily $c=1$

\end{proof}

\begin{definition}\label{def-irreductibles}

Let $G$ be a    reductive group  $($not necessarily connected$)$  and let $(G,V) $ be a regular $PV$.

{\rm a)}The prehomogeneous vector space $(G,V)$ is called $1$-irreducible if the singular set $S=V\setminus \Omega$ is an irreducible hypersurface.  $($According to Proposition \ref{prop.PV-non-connexe}, this is equivalent to the fact that there exists only one fundamental relative invariant under $G^{\circ}$, up to constants$)$.

{\rm b)}The prehomogeneous vector space $(G,V)$ is called  $2$-irreducible if for any proper invariant subspace $U\subset V$, the prehomogeneous vector space $(G,U)$ has no nontrivial relative invariant.

{\rm c)}The prehomogeneous vector space $(G,V)$ is called quasi-irreducible (abbreviated  Q-irreducible) if for any proper invariant subspace $U\subset V$, the prehomogeneous vector space $(G,U)$ is not regular.

\end{definition}

\begin{rem} It is well known that if $(G,V)$ is irreducible, than there exists at most one fundamental relative invariant. Therefore the irreducible regular $PV$'s with a reductive group are $1$-irreducible. The $PV$ from Example \ref{exemple1} is 1-irreducible but not irreducible.
\end{rem}

\begin{prop}\label{prop-irreductibilite-implications} Let $(G,V)$ be a regular $PV$ where $G$ is reductive. Among the various definitions of irreducibility, we have the following implications:
$$(G,V)  \text{is } 1-\text{irreducible} \Rightarrow (G,V)  \text{ is } 2-\text{irreducible}    \Rightarrow (G,V) \text{ is } Q-\text{irreducible}.$$ 

\end{prop}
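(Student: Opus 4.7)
My plan splits the statement into the two implications. The implication $2$-irreducible $\Rightarrow$ $Q$-irreducible is essentially a one-liner: if some proper $G$-invariant subspace $U\subsetneq V$ turned $(G,U)$ into a regular $PV$, then by definition $(G,U)$ would carry a nondegenerate---in particular non-constant---relative invariant, directly contradicting $2$-irreducibility.

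For the harder direction $1$-irreducible $\Rightarrow$ $2$-irreducible, I will argue by contradiction. Let $f_{1}$ be the unique fundamental relative invariant of $(G^{\circ},V)$. Suppose there exists a proper $G$-invariant subspace $U\subsetneq V$ and a non-constant $G$-relative invariant $f$ on $U$. Reductivity of $G$ yields a $G$-stable complement $W$, so $V=U\oplus W$, and Lemma~\ref{lemma-extension-restr-invariants}(a) produces a $G$-relative invariant $\tilde f$ on $V$ defined by $\tilde f(u+w)=f(u)$, with the same character as $f$.

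The crux is to feed this back through the $1$-irreducibility hypothesis. By Proposition~\ref{prop.PV-non-connexe}(3), every rational relative invariant of $G^{\circ}$ on $V$ has the form $cf_{1}^{m}$ with $c\in\mathbb{C}^{*}$ and $m\in\mathbb{Z}$. Applied to $\tilde f$, which is non-constant and hence forces $m\neq 0$, this gives $cf_{1}(u+w)^{m}=f(u)$. For generic $u$ the polynomial $w\mapsto f_{1}(u+w)$ is then a polynomial map from the connected affine space $W$ into the finite set of $m$-th roots of $f(u)/c$, hence constant; equivalently, $f_{1}$ factors through the projection $V\to U$. I expect this passage from the multiplicative identity $cf_{1}(u+w)^{m}=f(u)$ to the factorization of $f_{1}$ itself to be the one step that requires some care, but it is mild.

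The contradiction with regularity is then immediate. If $f_{1}$ depends only on the $U$-coordinate, its differential $df_{1}$, and therefore $d(cf_{1}^{m})/(cf_{1}^{m})=m\,df_{1}/f_{1}$, takes values in $U^{*}\subsetneq V^{*}$ at every point of $\Omega$. Since every rational relative invariant of $(G^{\circ},V)$ is a power of $f_{1}$, none of them is nondegenerate, contradicting the regularity of $(G,V)$ and closing the argument.
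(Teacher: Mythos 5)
Your proof is correct and follows essentially the same route as the paper: extend the relative invariant of $(G,U)$ to all of $V$ via Lemma~\ref{lemma-extension-restr-invariants}, observe that its logarithmic differential lands in $U^{*}\subsetneq V^{*}$, and use $1$-irreducibility to conclude that no relative invariant can be nondegenerate, contradicting regularity. The only cosmetic difference is that the paper argues directly with $\tilde f$ (every relative invariant being proportional to a power of the unique fundamental invariant, its logarithmic differential is a scalar multiple of $d\tilde f/\tilde f$ and hence degenerate), whereas you insert the extra, correct but not strictly necessary, step of showing that $f_{1}$ itself factors through the projection onto $U$.
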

\begin{proof} Suppose that $(G,V)$ is not 2-irreducible. Then it exists   a proper invariant subspace $U\subset V$ such that $(G,U) $ has a non trivial relative invariant $f$. Let $W$ be a $G$-invariant supplementary subspace to $U$. Then according to Lemma \ref{lemma-extension-restr-invariants} the fonction $\tilde{f}$ defined by $\tilde{f}(x+y)=f(x)\,\, (x\in U,y\in W)$ is a relative invariant on $V$ depending only on $x$. Therefore the map $\frac{d\tilde{f}}{\tilde{f}}$ cannot be generically surjective. But as $(G,V)$ is regular there exists a relative invariant $\varphi$  such that $\frac{d\varphi}{\varphi}$ is generically surjective. This is not the case if $\varphi= c \tilde{f}^k \, (c\in {\bb C}).$ Therefore there exists another fundamental relative invariant, and hence $(G,V)$ is not $1$-irreducible.  

Suppose now that $(G,V)$ is not  Q-irreducible. Then it exists a proper invariant subspace $U\subset V$ such that $(G,U)$ is regular. Hence $(G,U)$ has a non trivial relative invariant. Therefore $(G,V)$ is not $2$-irreducible.

\end{proof}  
\begin{prop}\label{prop-somme-reg}Let $(G,V)$ be a   $PV$ where $G$ is reductive. Suppose that
$\displaystyle{V=\oplus_{i=1}^{n} V_{i}}$ where each $V_{i}$ is a $G$-invariant subspace such that $(G,V_{i})$ is regular. Let $\Omega$ and $\Omega_{i}$ be the open orbits of $(G,V)$ and $(G,V_{i})$ respectively $(i=1,\dots,n)$. Then $(G,V)$ is regular and $\displaystyle{\Omega= \oplus _{i=1}^{n}\Omega_{i}}$. Moreover any polynomial relative invariant of $(G,V)$ is a product of relative invariants of the spaces $(G,V_{i})$.
\end{prop}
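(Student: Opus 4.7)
The plan is to construct an explicit nondegenerate relative invariant $f$ on $V$ from the nondegenerate invariants on the $V_{i}$, use it to establish regularity, and then exploit the one-dimensionality of character eigenspaces in a PV to show every polynomial relative invariant factors through the summands. Concretely, using the Remark following Proposition \ref{prop.PV-regulier-non-connexe} I choose for each $i$ the minimal-degree polynomial $f_{i}\in\mathbb{C}[V_{i}]$ defining $S_{i}=V_{i}\setminus\Omega_{i}$, so that $f_{i}$ is a nondegenerate relative invariant of $(G,V_{i})$ and $\{f_{i}\neq 0\}=\Omega_{i}$. By Lemma \ref{lemma-extension-restr-invariants}(a) the pullback $\tilde f_{i}$ along the projection $p_{i}$ is a relative invariant of $(G,V)$ with the same character, so $f:=\tilde f_{1}\cdots\tilde f_{n}$ is a relative invariant of $(G,V)$. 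To see $f$ is nondegenerate, write $df/f=\sum_{i}d\tilde f_{i}/\tilde f_{i}$; since each $\tilde f_{i}$ depends only on the $V_{i}$-component, this is a block-diagonal map $V=\oplus_{j}V_{j}\to V^{*}=\oplus_{j}V_{j}^{*}$ whose $i$-th block is $df_{i}/f_{i}:V_{i}\to V_{i}^{*}$, generically surjective onto $V_{i}^{*}$ by regularity of $(G,V_{i})$. Hence $df/f$ is generically surjective onto $V^{*}$, so $f$ is nondegenerate and $(G,V)$ is regular by Proposition \ref{prop.PV-regulier-non-connexe}.

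Next I would show that every polynomial relative invariant $\psi$ of $(G,V)$ factors as $\psi=c\,\tilde\psi_{1}\cdots\tilde\psi_{n}$ with each $\psi_{i}$ a relative invariant of $(G,V_{i})$. The key input is PV uniqueness: in any PV the $\chi$-eigenspace in the polynomial ring is at most one-dimensional, since a relative invariant is determined by its value at a single point of the open orbit. The multigrading on $\mathbb{C}[V]=\bigotimes_{i}\mathbb{C}[V_{i}]$ is preserved by $G$ because each $V_{i}$ is $G$-stable, so the multi-homogeneous components of $\psi$ are again relative invariants of the same character $\chi$; by the one-dimensionality only one is nonzero and $\psi$ is multi-homogeneous of some multi-degree $(d_{1},\ldots,d_{n})$. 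Combining this with the analogous one-dimensionality $\dim\mathbb{C}[V_{i}]^{(\chi_{i})}\leq 1$ for each PV $(G,V_{i})$ and analyzing the $G$-module structure of $\bigotimes_{i}\text{Sym}^{d_{i}}(V_{i}^{*})$, one identifies $\psi$ up to scalar with a tensor of weight vectors, i.e.\ with a product of relative invariants of the summands.

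Applying this factorization to each fundamental relative invariant of $(G,V)$ shows they are all pullbacks of fundamental relative invariants of individual $(G,V_{i})$. Combined with the Remark after Proposition \ref{prop.PV-regulier-non-connexe}, which identifies the singular set as the zero locus of the product of all fundamental relative invariants, this yields $S=\{f=0\}=\bigcup_{i}(V_{1}\oplus\cdots\oplus S_{i}\oplus\cdots\oplus V_{n})$, whence $\Omega=V\setminus S=\Omega_{1}\oplus\cdots\oplus\Omega_{n}$. The factorization statement for an arbitrary polynomial relative invariant is exactly what was established in the previous step.

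The main obstacle is the factorization step: for a torus $G$ it follows immediately from the weight decomposition, but for general non-abelian reductive $G$ the $\chi$-eigenspace of a tensor product $A\otimes B$ may in principle receive contributions from Clebsch--Gordan pairings of higher-dimensional irreducible $G$-subrepresentations which need not be rank-one tensors. The PV one-dimensionality $\dim\mathbb{C}[V]^{(\chi)}\leq 1$, together with the reductivity of $G$ and the regularity of each $(G,V_{i})$, is the crucial input that rules out such contributions and forces $\psi$ to be proportional to a genuine tensor product of invariants of the summands.
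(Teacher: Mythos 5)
Your first step (constructing $f=\prod_i \tilde f_i$, observing that $df/f$ is block-diagonal with generically surjective blocks, and concluding regularity of $(G,V)$ from Proposition \ref{prop.PV-regulier-non-connexe}) is exactly the paper's argument and is fine. The gap is in the factorization step, and it is a genuine one: the one-dimensionality of the $\chi$-eigenspace in $\mathbb{C}[V]$, which you name as the crucial input, does \emph{not} rule out the Clebsch--Gordan contributions you yourself worry about. The paper's own Example \ref{exemple1} defeats your mechanism: there $(G,V_1\oplus V_2)$ is a regular PV, so every character eigenspace of $\mathbb{C}[V]$ is at most one-dimensional, and yet the unique relative invariant $Q(v,w)={}^t v\,w=\sum_k v_kw_k$ is precisely a pairing of two $n$-dimensional irreducible summands and is not a product $\psi_1(v)\psi_2(w)$. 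What fails in that example is the regularity of the summands $(G,V_i)$; so the regularity hypothesis must enter the factorization argument in an essential way, and your proposal never says how. The sentence ``analyzing the $G$-module structure \dots one identifies $\psi$ with a tensor of weight vectors'' is exactly the assertion to be proved, not a proof of it.

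The paper closes this gap by reversing your order of deduction. It first proves $\Omega=\oplus_i\Omega_i$ directly, using the Hessian identity $H_f(x)=(1-r)\det d\varphi(x)\,f(x)^k$ together with the fact that for a reductive regular PV one has $\Omega=\{x\mid f(x)H_f(x)\neq 0\}$ (this is also why the paper replaces $f_i$ by $f_i^2$ when necessary, to force $r=\partial^{\circ}(f)>1$ so that the factor $(1-r)$ does not kill the identity); the reverse inclusion $\Omega\subset\oplus_i\Omega_i$ comes from Proposition \ref{prop-components}. Once $\Omega=\oplus_i\Omega_i$ is known, $S=V\setminus\Omega$ is the zero set of $\prod_iP_i(x_i)$ with $P_i$ defining $S_i$, so the fundamental relative invariants of $(G^{\circ},V)$ are the irreducible factors of the $P_i$, each depending on a single block of variables, and Proposition \ref{prop.PV-non-connexe} then shows that every relative invariant is a monomial in these, hence a product of relative invariants of the $(G,V_i)$. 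You would need either to adopt this route or to supply an actual argument showing how regularity of each $(G,V_i)$ excludes the higher-dimensional pairings.
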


\begin{proof} Let us make the usual identification $\displaystyle{V^*=\oplus_{i=1}^{n}V_{i}^*}$. Let $f_{i}$ be a relatively invariant polynomial  of $(G,V_{i})$ such that $\varphi_{i}=\frac{df_{i}}{f_{i}}: \Omega_{i}\longrightarrow V_{i}^*$ is generically surjective. Replacing eventually $f_{i}$ by its square, we can suppose that $\partial ^{\circ}(f_{i})>1$ ($\partial ^{\circ}(f_{i})$ denotes the degree of $f_{i}$). Define a relative invariant of $(G,V)$ by:
$$f(x_{1},x_{2},\dots,x_{n})=f_{1}(x_{1})f(x_{2})\dots f_{n}(x_{n})\quad (x_{i}\in V_{i}).$$
Then $\displaystyle \varphi(x_{1},x_{2},\dots,x_{n})=\frac{df(x_{1},x_{2},\dots,x_{n})}{f(x_{1},x_{2},\dots,x_{n})}= \varphi_{1}(x_{1})\oplus \varphi_{2}(x_{2})\oplus\dots\oplus \varphi_{n}(x_{n}).$
As the map $x_{i}\longrightarrow \varphi_{i}(x_{i})$ is generically surjective from $\Omega_{i} $ to $V_{i}^*$, we see that $\varphi$ is generically surjective from $\oplus _{i=1}^{n}\Omega_{i}$ to $V^*$. Then from Proposition \ref{prop.PV-regulier-non-connexe} we obtain that $(G,V)$ is regular. Moreover we have $\displaystyle \det d\varphi (x_{1},x_{2},\dots,x_{n})=\prod_{i=1}^{n}\det d\varphi_{i}(x_{i})$ and we know from \cite{Sato-Kimura} p.63 that the Hessian $H_{f}$ is given by
$$H_{f}(x)=(1-r)\det d\varphi(x). f(x)^k$$
where $r= \partial ^{\circ}(f)$ and where $k=\dim V$. Hence $H_{f}\neq 0$ on $\displaystyle \oplus_{i=1}^{n}\Omega_{i}$. On the other hand it is known (\cite{Sato-Kimura}, p.70\footnote{In the paper by M. Sato and T. Kimura, it is written that if $(G,V)$ is a regular $PV $ with $G$ reductive, and if $f$ is a relative invariant with $H_{f}\neq0$, then $\Omega=\{x\,|\,H_{f}(x)\neq0\}$, but analyzing their proof it is easy to see that in fact $ \Omega=\{x\,|\,f(x)H_{f}(x)\neq0\}$ (the first assertion would be wrong if $\partial^{\circ}f=2$)}, \cite{rub-bouquin-PV} p. 22-23$)$,  that if $H_{f}\neq 0$ then $ \Omega=\{x\,|\,f(x)H_{f}(x)\neq0\}$. This implies that $\displaystyle \oplus_{i=1}^{n}\Omega_{i}\subset \Omega$. The reverse inclusion is a consequence of Proposition \ref {prop-components}. The set $S_{i}=V_{i}\setminus \Omega_{i}$ is a hypersurface defined by an equation $P_{i}=0$ where $P_{i} $ is a relatively invariant polynomial on $V_{i}$ (Proposition \ref{prop.PV-regulier-non-connexe}). We will choose $P_{i}$ of minimal degree among the polynomials defining $S_{i}$. Then $P_{i}=f_{i, 1}\dots f_{i,l_{i}}$ where the $f_{i,j}'s$  are irreducible  relatively invariant polynomials   under $G^{\circ }$ on $V_{i}$, which are algebraically independant. From  Proposition \ref{prop.PV-non-connexe} we know that we can write $P_{i}=\varphi_{i,1}\dots \varphi_{i,m_{i}}$, where the $\varphi_{i,j}$'s are polynomials on $V_{i} $ which are relatively invariant under $G$. As  $\displaystyle \Omega= \oplus_{i=1}^{n}\Omega_{i}  $ we obtain that
$$S=V\setminus \Omega=\{ (x_{1},x_{2},\dots,x_{n})\in V\,|\, P(x_{1},x_{2},\dots,x_{n})=\prod_{i=1}^{n}P_{i}(x_{i})=0\}.$$

Using again Proposition \ref{prop.PV-non-connexe}, we obtain that any $G$-relatively invariant polynomial on $V$ is a product of polynomials of the form $\varphi_{i,j}^{\alpha_{i,j}}$, where $\alpha_{i,j}\in {\bb N}$.

\end{proof}

\begin{definition}\label{def-completement-Q-red}  Let $G$ be a reductive group $($not necessarily connected $)$ and let $(G,V)$ be a $PV$. The $PV$ $(G,V)$ is called completely $Q$-reducible if there exists a decomposition $\displaystyle V=\oplus_{i=1}^{n}V_{i}$ where the $V_{i}$'s are $G$-invariant subspaces such that $(G,V_{i})$ is $Q$-irreducible. The spaces $V_{i}$ are then called $Q$-irreducible components of $(G,V)$.
\end{definition}

\begin{rem} We know from Proposition \ref{prop-somme-reg} that a completely $Q$-irreducible $PV$ is regular.
\end{rem}

It is well known that for ordinary finite dimensional completely reducible representations of a group $G$ the equivalence classes occuring in any decomposition into irreducibles are uniquely determined, as well as the isotypic components. Our next aim is to prove analogous results for completely $Q$-reducible regular $PV$'s where the irreducible components are replaced by the $Q$-irreducible components and the isotypic components are replaced by the $Q$-isotypic components.

\begin{theorem}\hfill{}

Let $(G,V)$ be a completely $Q$-reducible $PV$. Let $\displaystyle V=\oplus_{i=1}^{n}V_{i}$ be a decomposition of $V$ into $Q$-irreducible components. Let $W\subset V$ be an invariant subspace such that $(G,W)$ is regular. Then $(G,W)$ is a completely $Q$-reducible $PV$. Moreover if $W_{j}$ is a $Q$-irreducible component of $(G,W)$, there exists an integer $\ell(j) \in \{1,2,\dots,n\}$ such that the representation $(G,W_{j})$ is equivalent to $(G,V_{\ell(j)})$.

The equivalence classes of the $Q$-irreducible components arising in $(G,V)$ are uniquely determined.

Let $\delta$ be an equivalence class of one of  $Q$-irreducible components arising in $\displaystyle V=\oplus_{i=1}^{n}V_{i}$ $($i.e. an equivalence class of one of the representations $(G,V_{j})$$)$. Let $I(\delta)=\{i\,|\, (G,V_{i})\in \delta \}$ and let $m(\delta)={\rm Card}I(\delta)$ be the multiplicity of $\delta$. Let also $\displaystyle V(\delta)= \oplus_{i\in I(\delta)}V_{i}$ be the so-called $Q$-isotypic component of $\delta$. Then $m(\delta)$ does not depend on the decomposition of $V$ into $Q$-irreducible subspaces. Moreover if $U\subset V$ is an invariant subspace of type $\delta$ $($this means that $U$ is a direct sum of $Q$-irreducible invariant subspaces which are all of type $\delta$$)$, then $U$ is a subspace of $V(\delta)$. In other words the $Q$-isotypic components are uniquely determined.
\end{theorem}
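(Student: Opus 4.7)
I would prove the theorem in three stages: existence of a $Q$-irreducible decomposition of $W$, identification of its components with the $V_i$'s up to representation equivalence, and the uniqueness of equivalence classes, multiplicities, and $Q$-isotypic components.

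\textbf{Existence.} Argue by induction on $\dim W$. Pick a $G$-invariant subspace $W_1 \subseteq W$ of \emph{minimal} dimension among those with $(G,W_1)$ regular (the family is nonempty since $W$ itself belongs to it). By minimality, no proper invariant subspace of $W_1$ yields a regular PV, so $(G,W_1)$ is $Q$-irreducible. Using reductivity of $G$, pick a $G$-invariant complement $W_1'$ with $W = W_1 \oplus W_1'$. The delicate point is to arrange that $(G, W_1')$ is itself regular, so that induction applies. From Proposition~\ref{prop-components}(5), $(G_{x_0}, W_1')$ is regular for generic $x_0 \in W_1$; I would then use the isotropy-based characterization of regularity in Proposition~\ref{prop.PV-regulier-non-connexe} (generic isotropy subalgebra reductive) together with minimality of $W_1$ to promote this to regularity of $(G, W_1')$, possibly adjusting the complement by adding $G$-equivariant maps $W_1' \to W_1$ when the naïve complement fails.

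\textbf{Identification.} For a $Q$-irreducible component $W_j$ of $W$, consider the $G$-equivariant projection $\pi_i : V \to V_i$ restricted to $W_j$. As $W_j \neq 0$, at least one $\pi_i|_{W_j}$ is nonzero. Its kernel is a proper $G$-invariant subspace of the $Q$-irreducible $W_j$, hence $(G, \ker \pi_i|_{W_j})$ is not regular; its image is a nonzero $G$-invariant subspace of the $Q$-irreducible $V_i$. Using the regularity of $(G,W_j)$ and Lemma~\ref{lemma-extension-restr-invariants} to transport relative invariants back and forth between $W_j$, $\pi_i(W_j)$, and $V_i$, I would rule out both a proper kernel and a proper image, concluding that $\pi_i|_{W_j}$ is an isomorphism $W_j \cong V_i$ of $G$-representations, giving $\ell(j) = i$.

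\textbf{Uniqueness.} Apply the identification to $W = V$ relative to two different $Q$-irreducible decompositions $V = \bigoplus V_i = \bigoplus V_j'$; this yields a bijection between equivalence classes with matching multiplicities, which pins down $m(\delta)$ independently of the decomposition. For the isotypic component, I would characterize $V(\delta)$ intrinsically as the sum of all $G$-invariant subspaces $U \subseteq V$ such that $(G,U)$ is regular with every $Q$-irreducible constituent of $U$ equivalent to $\delta$. Applying the existence/identification result to any such $U$ shows $U \subseteq V(\delta)$ (in a fixed decomposition), while $V(\delta)$ itself clearly belongs to the family, giving intrinsic uniqueness.

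\textbf{Main obstacle.} The hardest step is ensuring, in the existence stage, that the inductive complement $W_1'$ can be arranged to inherit regularity from $W$: a naïve $G$-invariant complement may fail to be regular, and I expect the proof to hinge on a careful interplay between Proposition~\ref{prop-components}(5), Proposition~\ref{prop.PV-regulier-non-connexe}, and the minimality of $W_1$. A secondary subtlety is the rigidity in the identification stage, where one must exclude partial (non-zero, non-isomorphic) projections $\pi_i|_{W_j}$; this uses in an essential way that both source and target are $Q$-irreducible, not merely that $Q$-irreducibility is preserved under iso.
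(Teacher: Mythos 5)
Your existence stage contains the fatal gap, and it is visible from the fact that the argument never uses the hypothesis that the ambient $V$ is completely $Q$-reducible: as written it would prove that \emph{every} regular reductive $PV$ is completely $Q$-reducible, which is false (Example \ref{example2}: $V=S(n,\bb C)\oplus{\bb C}^n$ is regular, the minimal regular invariant subspace is the $Q$-irreducible $S(n,\bb C)$, and its unique invariant complement ${\bb C}^n$ is not regular; no ``adjustment by equivariant maps ${\bb C}^n\to S(n,\bb C)$'' exists because the two irreducibles are inequivalent). Proposition \ref{prop-components}(5) only gives regularity of $(G_{x_0},W_1')$ for the \emph{isotropy group} $G_{x_0}$, and minimality of $W_1$ gives no mechanism to promote this to regularity of $(G,W_1')$; that promotion is precisely what fails in general. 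The paper's proof runs on an entirely different engine that you never invoke: decompose each $V_j$ into ordinary irreducibles, observe that up to equivalence $W$ is a sum of full $V_j$'s plus \emph{proper} partial sums of irreducible constituents of the remaining $V_j$'s, and then use Proposition \ref{prop-somme-reg} to factor a nondegenerate relative invariant $f$ of $(G,W)$ as a product of relative invariants of the individual $V_j$'s. Generic surjectivity of $df/f$ on a direct sum forces generic surjectivity of each factor's logarithmic differential, which by Proposition \ref{prop.PV-regulier-non-connexe} would make a proper partial sum inside some $V_j$ regular, contradicting its $Q$-irreducibility. This is the step that uses complete $Q$-reducibility of $V$, and without it the induction you propose cannot start over again on the complement.

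The later stages have secondary gaps of the same flavour. In the identification step, a $Q$-irreducible $W_j$ need not be irreducible as a representation, so there is no reason a priori that any single coordinate projection $\pi_i|_{W_j}$ is injective (e.g. $W_j$ could meet several $V_i$'s through different ordinary-irreducible constituents); the paper sidesteps this by rerunning the relative-invariant factorization with $W_j$ in place of $W$, getting $(G,W_j)\simeq(G,\oplus_{k\in B}V_k)$ and then $|B|=1$ from $Q$-irreducibility. For the isotypic component, knowing the equivalence class of $U$ (which is all the identification result delivers) does not locate $U$ inside $V$, so ``$U\subseteq V(\delta)$'' does not follow as you claim; the paper needs a genuinely separate argument — the injective equivariant map $\varphi:S\to V'$ from a complement $S$ of $U\cap V(\delta)$ in $U$, the auxiliary subspace $U'=(U\cap V(\delta))\oplus\varphi(S)$, and one more application of the factorization of a nondegenerate relative invariant to force $(G,U\cap V(\delta))$ to be regular and hence equal to $U$.
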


\begin{proof} \hfill{}

Let $V_{j}=\oplus_{i=1}^{\ell(j)}U_{j}^{i}$ be a decomposition of $V_{j}$ into irreducible components in the ordinary sense. As we are only interested in equivalence classes of representations we can assume that $W= (\oplus_{j\in A}V_{j})\oplus (\oplus _{j\in A^{c}}\oplus_{i\in I_{j}}U_{j}^{i})$, where $A$ is a subset of $\{1,2,\dots,n\}$ and where $I_{j}$ is a proper subset of $\{1,2,\dots,\ell(j)\}$. After renumbering, we can  suppose that $I_{j}=\{1,2,\dots,m(j)\}$ where $m(j)<\ell(j)$. Let us denote by $x_{j}$ the variable in $V_{j}$ and by $x_{j}^{i}$ the variable in $U_{j}^{i}$. Hence $x_{j}=(x_{j}^1,x_{j}^2,\dots,x_{j}^{\ell(j)})$. Let $j_{1},j_{2},\dots,j_{k}$ be the elements of $A$, and $j_{k+1},\dots,j_{n}$ be the elements in $A^{c}$. 

Let $f$ be a relative invariant   of $(G,W)$ such that $\displaystyle \frac{df}{f}$ is generically surjective. Then  $f$ is a function in the variables:
$$(x_{j_{1}},\dots, x_{j_{k}}; x_{j_{k+1}}^{1},\dots, x_{j_{k+1}}^{m(j_{k+1})};\dots; x_{j_{n}}^{1},\dots,  x_{j_{n}}^{m(j_{n})}).$$
We know from Proposition \ref{prop-somme-reg} that $f$ is a product of relative invariants of the $V_{j}$'s. Hence
$$\displaylines{f(x_{j_{1}},\dots, x_{j_{k}}; x_{j_{k+1}}^{1},\dots, x_{j_{k+1}}^{m(j_{k+1})};\dots; x_{j_{n}}^{1},\dots,  x_{j_{n}}^{m(j_{n})})\cr
=f_{j_{1}}(x_{j_{1}})\dots f_{j_{k}}(x_{j_{k}})f_{j_{k+1}}(x_{j_{k+1}}^1,\dots,x_{j_{k+1}}^{\ell_{j_{k+1}}})\dots f_{j_{n}}(x_{j_{n}}^1,\dots,x_{j_{n}}^{\ell_{j_{n}}})\cr
}$$
where each $f_{j_{r}}$ is a relative invariant of $(G,V_{j_{r}})$.
Therefore:
$$\begin{array}{ccc}
f_{j_{k+1}}& \text{ depends only on the variables }& x_{j_{k+1}}^{1},\dots, x_{j_{k+1}}^{m(j_{k+1})}\\
\vdots& \vdots &\vdots\\
f_{j_{n}}& \text{ depends only on the variables }& x_{j_{n}}^{1},\dots, x_{j_{n}}^{m(j_{n})}.\\
\end{array}$$
But as $\displaystyle \frac{df}{f}=\frac{df_{j_{1}}}{f_{j_{1}}}\oplus \dots \oplus \frac{df_{j_{k}}}{f_{j_{k}}}\oplus \frac{df_{j_{k+1}}}{f_{j_{k+1}}}\oplus \dots \oplus \frac{df_{j_{n}}}{f_{j_{n}}}$ is generically surjective, each $\displaystyle\frac{df_{j_{r}}}{f_{j_{r}}}$ must be generically surjective. For example $\displaystyle \frac{df_{j_{k+1}}}{f_{j_{k+1}}}$ will be generically surjective from an open set of $U_{j_{k+1}}^1\oplus \dots\oplus U_{j_{k+1}}^{m(j_{k+1})}$ to its dual. Therefore, from  Proposition \ref{prop.PV-regulier-non-connexe} we know that $(G, U_{j_{k+1}}^1\oplus \dots\oplus U_{j_{k+1}}^{m(j_{k+1})})$ would be regular. But this is impossible, since $(G,V_{k+1})$ is $Q$-irreducible. Hence $(G,W)\simeq (G,\oplus_{j\in A}V_{j})$, and this shows that $(G,W)$ is completely $Q$-reducible.

Let $W=\oplus_{j=1}^{k}W_{j}$ be a decomposition of $W$ into $Q$-irreducible components. Then the same proof as above, applied to $W_{j}$ instead of $W$ shows that $(G,W_{j})$ is equivalent to $(G,\oplus_{k\in B}V_{k})$, where $B\subset \{1,\dots,n\}$. But as $(G,W_{j})$ is $Q$-irreducible the set $B$ is a single element. The same proof applied to $V$ shows that any $Q$-irreducible component of $V$ is equivalent to some $V_{i}$. Hence the equivalence classes of the $Q$-irreducible components are uniquely determined.

Let us now prove the assertion concerning the multiplicities. Let $V=\oplus_{k=1}^{r}U_{k}$ be another decomposition of $V$ into $Q$-irreducible components. We can suppose that $r\leq n$. From above we know that $(G,U_{1})\simeq (G,V_{i_{1}})$ where $i_{1}\in \{1,\dots,n\}$. Then by a classical argument $(G,\oplus_{k=2}^{n}U_{k})\simeq (G,\oplus_{i\neq i_{1}}V_{i})$. Then inductively one proves that $r=n$ and that there exists a permutation $\sigma$ of $\{1,\dots,n\}$ such that $(G,U_{i})\simeq (G,V_{\sigma(i)})$. Therefore the multiplicity does not depend on the decomposition into $Q$-irreducibles.

Let now $U\subset V$ be an invariant $Q$-irreducible subspace of type $\delta$ and define $V'=\oplus_{i\notin I(\delta)}V_{i}$. Let $S$ be a $G$-invariant supplementary space of $U\cap V(\delta)$ in U. Hence we have:
$$U= U\cap V(\delta)\oplus S,  \quad V=V(\delta)\oplus V'.$$
For $s\in S$  let us write $s=v_{1}+v_{2}$ with $v_{1}\in V(\delta)$ and $v_{2}\in V'$. The linear mapping $\varphi: S \longrightarrow V'$ defined by $\varphi(s)=v_{2}$ is injective, because if $\varphi(s)=v_{2}=0$, then $s=v_{1}\in U\cap V(\delta)\cap S=\{0\}$. Moreover $\varphi$ is $G$-equivariant. Suppose that $U\cap V(\delta)=\{0\}$. If this is the case, we   have $S=U$, and then $S'=\varphi(S)$ is a subspace of type $\delta$ of $V'$. This is not possible from the definition of $V'$. Therefore $U\cap V(\delta)\neq\{0\}$.  Define $U'=U\cap V(\delta) \oplus S'$. As $\varphi$ is $G$-equivariant, the subspace $U'$ is invariant of type $\delta$. Let $f$ be a relative invariant of $(G,U')$ such that $\displaystyle \frac{df}{f}$ is generically surjective. From Proposition \ref{prop-somme-reg} we know that 
$f(x,s')=\varphi_{1}(x)\varphi_{2}(s')$, where $x\in U\cap V(\delta), s'\in S'$, and where $\varphi_{1}$ and $\varphi_{2}$ are relative invariants of $(G,V(\delta))$ and $(G,V')$ respectively. As $\displaystyle \frac{df}{f}=\frac{d\varphi_{1}}{\varphi_{1}}\oplus \frac{d\varphi_{2}}{\varphi_{2}}$, we obtain that $\displaystyle \frac{d\varphi_{1}}{\varphi_{1}}$ and $\displaystyle \frac{d\varphi_{2}}{\varphi_{2}}$ are generically surjective. This implies  that $(G,U\cap V(\delta))$ is regular and this is  possible if and only if $U\cap V(\delta)=U$, because $(G,U)$ is $Q$-irreducible. Hence $U \subset V(\delta)$.

\end{proof}
\section{The decomposition theorem for reductive regular PV's}\hfill 

\subsection{}Of course, reductive regular $PV$'s are not necessarily completely $Q$-reducible as shown by  the following example.
\begin{example}\label{example2}  
 
 Let $n\geq 2$ be an integer and let $G=GL(n,{\bb C})\times {\bb C}^*$ and $V= S(n,{\bb C})\times {\bb C}^{n} $ where $S(n, {\bb C})$ is the space of complex $n$ by $n$ symmetric matrices. The action of $G$ on $V$ is given by
 $$(g,a)(X,v)= (gX {^tg}, a{^tg} ^{-1}v),  \quad g\in GL(n,{\bb C}), a\in {\bb C}^*, X\in S(n,{\bb C}), v\in {\bb C}^n.$$
 The isotropy subgroup of $(I_{n},e_{1})$, where $I_{n}$ is the identity matrix and where $e_{1}$ is the first vector of the canonical basis of ${\bb C}^n$, is easily seen to be isomorphic to the orthogonal group $O(n-1)$. This proves that $(G,V)$ is a reductive regular $PV$. As the irreducible components are $S(n,{\bb C})$ and ${\bb C}^n$, and as $(G,{\bb C}^n)$ is not regular, the $PV$ $(G,V)$ is not completely $Q$-reducible.  
 
   \end{example}
   
   \subsection{Structure of reductive regular PV's}\hfill{}
   
   The following theorem shows the structure of reductive  regular $PV$'s.
   
   \begin{theorem}\label{th-decomposition}\hfill
   
   Let $(G,V)$ be a reductive regular $PV$ and let $x$ be a generic element of $V$. Denote by $G_{x}$ the isotropy subgroup of $x$. There exist a sequence of subspaces $V_{1},V_{2},\dots,V_{n}$ such that $V=V_{1}\oplus V_{2}\oplus\dots\oplus V_{n}$,  a sequence of integers $i_{1}=1<i_{2}<\dots<i_{k}\leq n$ and a sequence of reductive subgroups
   $$G_{x}=G_{k+1}\subset G_{k}\subset\dots\subset G_{1}=G$$
   with the following properties:

{\rm1)} If $x=x_{1}+x_{2}+\dots+x_{n}$ with $x_{j}\in V_{j}$, then
   $$G_{\ell +1}=(G_{\ell})_{x_{i_{\ell}}+\dots+x_{i_{\ell +1}-1}}$$
   
   {\rm 2)} For $\ell \in \{1,\dots,k\}$ the space $V_{i_{\ell}}\oplus\dots\oplus V_{n}$ is $G_{\ell}$-invariant and $(G_{\ell},V_{i_{\ell}}\oplus\dots\oplus V_{n})$ is a regular $PV$.
   
    {\rm 3)} If $i_{\ell}\leq j \leq i_{\ell +1}-1$, then $V_{j}$ is $G_{\ell}$-invariant , $(G_{\ell},V_{j})$ is a $Q$-irreducible $PV$ and  $(G_{\ell}, V_{i_{\ell}}\oplus \dots \oplus V_{i_{\ell +1}-1})$ is a maximal completely $Q$-reducible $PV$ in $V_{i_{\ell}}\oplus \dots \oplus V_{i_{n}}$  . Moreover $V_{i_{\ell +1}}\oplus \dots \oplus V_{n}$ is $G_{\ell}-invariant$ but does not contain any subspace $U\neq\{0\}$ such that $(G_{\ell}, U)$ is regular.
   
   \end{theorem}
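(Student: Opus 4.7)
The plan is to build the filtration inductively on $\dim V$, at each stage splitting off a maximal completely $Q$-reducible invariant subspace and then passing to the isotropy at its generic component.

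Before beginning, I would establish the following auxiliary fact: every reductive regular PV $(H,W)$ contains an $H$-invariant subspace $W'$ such that $(H,W')$ is $Q$-irreducible. Indeed, the family of $H$-invariant subspaces $W''\subseteq W$ with $(H,W'')$ regular is nonempty; any member of minimal positive dimension must be $Q$-irreducible, since a proper regular invariant subspace would contradict minimality.

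Now, for the main construction, assume inductively that the theorem holds for every reductive regular PV of dimension strictly less than $\dim V$. At stage $\ell=1$ set $G_{1}=G$, $U_{1}=V$ and at stage $\ell$ assume we have produced $G_{\ell}$ reductive and a $G_{\ell}$-invariant subspace $U_{\ell}=V_{i_{\ell}}\oplus\dots\oplus V_{n}$ such that $(G_{\ell},U_{\ell})$ is a regular PV. Choose a $G_{\ell}$-invariant subspace $M_{\ell}\subseteq U_{\ell}$ of maximal dimension such that $(G_{\ell},M_{\ell})$ is completely $Q$-reducible; by the auxiliary fact we have $M_{\ell}\neq\{0\}$. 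By reductivity of $G_{\ell}$, pick a $G_{\ell}$-invariant complement $N_{\ell}$, write $M_{\ell}=V_{i_{\ell}}\oplus\dots\oplus V_{i_{\ell+1}-1}$ as a sum of $Q$-irreducibles (this defines the next index $i_{\ell+1}$), and let $y_{\ell}=x_{i_{\ell}}+\dots+x_{i_{\ell+1}-1}$ be the $M_{\ell}$-component of the generic element $x$. Set $G_{\ell+1}=(G_{\ell})_{y_{\ell}}$. By Proposition \ref{prop-components} part 2, $y_{\ell}$ is generic in $(G_{\ell},M_{\ell})$, so by the remark after Definition \ref{def-completement-Q-red} together with Proposition \ref{prop.PV-regulier-non-connexe}, $G_{\ell+1}$ is reductive; and by Proposition \ref{prop-components} part 5 applied to the splitting $U_{\ell}=M_{\ell}\oplus N_{\ell}$, $(G_{\ell+1},N_{\ell})$ is a regular reductive PV. If $N_{\ell}=\{0\}$ we stop, set $k=\ell$ and $G_{k+1}=G_{x}$; otherwise, since $\dim N_{\ell}<\dim U_{\ell}$ we iterate (or invoke the inductive hypothesis on $(G_{\ell+1},N_{\ell})$).

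The main obstacle is verifying the last assertion of property 3: that $N_{\ell}$ contains no nontrivial $G_{\ell}$-invariant subspace $U$ with $(G_{\ell},U)$ regular. Suppose such $U$ existed. Applying the auxiliary fact to $(G_{\ell},U)$ yields a $Q$-irreducible $G_{\ell}$-invariant subspace $U'\subseteq U\subseteq N_{\ell}$. Since $U'\cap M_{\ell}\subseteq N_{\ell}\cap M_{\ell}=\{0\}$, the sum $M_{\ell}\oplus U'$ is direct, and concatenating the $Q$-irreducible decomposition of $M_{\ell}$ with $U'$ shows that $(G_{\ell},M_{\ell}\oplus U')$ is completely $Q$-reducible. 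This contradicts the maximality of $\dim M_{\ell}$. The same argument also shows that $(G_{\ell},V_{i_{\ell}}\oplus\dots\oplus V_{i_{\ell+1}-1})$ is a \emph{maximal} completely $Q$-reducible PV inside $V_{i_{\ell}}\oplus\dots\oplus V_{n}$.

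Properties 1-3 then follow directly from the construction: property 1 holds by definition of $G_{\ell+1}$; the first part of property 2 is the invariance of the complement $N_{\ell}$ combined with regularity of $(G_{\ell},U_{\ell})$ (established at the previous stage); property 3 gathers the maximality just verified, the $Q$-irreducibility of each $V_{j}$ with $i_{\ell}\leq j\leq i_{\ell+1}-1$ built into the construction, and the impossibility of a regular invariant subspace inside $N_{\ell}$. Termination of the induction is guaranteed because $M_{\ell}\neq\{0\}$ forces $\dim N_{\ell}<\dim U_{\ell}$ at each stage.
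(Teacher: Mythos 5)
Your proof is correct and follows essentially the same route as the paper's: induction on dimension, splitting off a maximal completely $Q$-reducible invariant subspace, passing to the isotropy subgroup of its generic component, and applying Proposition \ref{prop-components} 5) to obtain regularity of the isotropy action on the invariant complement. The only notable difference is that you isolate as an explicit auxiliary fact the existence of a $Q$-irreducible invariant subspace inside any reductive regular $PV$ (via a minimal-dimension argument), a point the paper uses only implicitly when it deduces from the maximality of $V'$ that the complement $V''$ contains no nonzero regular invariant subspace.
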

   
   \begin{proof} \hfill{}
   
   The proof goes by induction on $\dim V$. There is nothing to prove if $\dim V=1$. Suppose that the theorem is proved for all reductive regular $PV$'s such that $\dim V\leq r$. Let then $(G,V)$ be a reductive regular $PV$ such that $\dim V=r+1$. Let $V'\subset V$ be an invariant subspace such that $(G,V')$ is completely $Q$-reducible and maximal in $V$ for this property. Denote by 
   $$V'= V_{1}\oplus V_{2}\oplus \dots \oplus V_{i_{2}-1}$$
a decomposition of $V'$ into $Q$-irreducible components. Let $V''$ be an invariant supplement  of $V'$. If $V''=\{0\}$ the $PV$ $(G,V)$ is completely $Q$-reducible and the proof is finished. From the maximality of $V'$ and Proposition \ref{prop-somme-reg}, we know that $(G,V'')$ does not contain any subspace $U\neq\{0\}$ such that $(G,U)$ is regular.

Let $x $ be a generic element in $V$. Let us write:
$$x=x_{1}+x_{2}+\dots+x_{i_{2}-1}+x'' \text{ where } x_{j}\in V_{j} \text{ and where } x''\in V''.$$
Define $G_{2}= G_{x_{1}+\dots+x_{i_{2}-1}}$. From Proposition  \ref {prop.PV-regulier-non-connexe} we know that  $G_{2}$ is reductive and from Proposition \ref{prop-components} ${\rm 5)}$ we know that $(G_{2},V'')$ is regular. As $\dim V''\leq r$, we know by induction  that there exists   a sequence of integers $i_{2}<i_{3}<\dots<i_{k}\leq n$  and a sequence of reductive subgroups 
$$G_{x}=(G_{2})_{x''}=G_{k+1}\subset G_{k}\subset \dots \subset G_{2}$$
which have the required properties for the triplet $(G_{2}, V'', x'')$. Then the sequences $i_{1}<i_{2}<i_{3}<\dots<i_{k}\leq n$  and 
$$G_{x}=(G_{2})_{x''}=G_{k+1}\subset G_{k}\subset \dots \subset G_{2}\subset G_{1}=G$$
have the required properties for the triplet $(G,V,x)$.

   \end{proof}
   
   Let us give three examples of the kind of decompositions arising in the preceding Theorem.
   
   \begin{example} Let us return to Example \ref{example2}. In the notations of the preceding Theorem, we take  for $G_{2} $  the isotropy of $I_{n}\in S(n,{\bb C})$, namely $O(n,{\bb C})\times {\bb C}^*$, and $V_{1}=S(n,{\bb C})$ and $V_{2}=V_{i_{2}}={\bb C}^{n}$.
   
   \end{example}
   \begin{example}\label{example-Sato}(Example of the "descending chains" of F. Sato \cite{F-Sato-3})    
   
 Let $V_{m}=M(m+1,m)$ be the space of complex $(m+1)\times m$ matrices.
Define $V=V_{n}\oplus V_{n-1}\oplus \dots \oplus V_{1}$ and let $G=SO(n+1)\times GL(n)\times GL(n-1) \times \dots \times GL(1)$. The group $G$ acts by
$$(g_{n+1}, g_{n},\dots,g_{1})(x_{n},\dots,x_{1})=(g_{n+1}x_{n}g_{n}^{-1}, g_{n}x_{n-1}g_{n-1}^{-1}, \dots, g_{2}x_{1}g_{1}^{-1})$$
where $g_{n+1}\in SO(n+1)$, $g_{i}\in GL(i)$, $x_{i}\in V_{i}$ for $i=1,\dots,n$.
This representation is a regular $PV$ and the fundamental relative invariants are given by 
$$P_{k}(x_{n},x_{n-1},\dots, x_{1})=\det(^t\hskip -2ptx_{k}\hskip 1pt^t \hskip -2ptx_{k-1}\dots^t \hskip -2ptx_{n} x_{n}\dots x_{k-1}x_{k}). $$
This $PV$ is called the $PV$ of descending chains of size $n$ (see \cite{F-Sato-3} for the details). It is then easily seen that $(G,V_{n})$ is a maximal $Q$-completely reducible subspace (in fact it is irreducible regular). Taking $x^0_{n}=\left[\hskip-5pt \begin{array}{c}I_{n}\\0\end{array}\hskip -5pt \right]$ as regular element of $(G,V_{n})$, a simple computation shows that its  isotropy subgroup $G_{x_{n}^0}$ is equal to $D(SO(n)\times SO(n))\times GL(n-1)\times\dots\times GL(1) $ where  $D(SO(n)\times SO(n))$ stands for the diagonal subgroup of $SO(n)\times SO(n)$, the first factor being diagonally embedded in $SO(n+1)$. Therefore the regular $PV$ $(G_{x_{n}^0},V_{n-1}\oplus \dots \oplus V_{1})$ is essentially  the $PV$ of descending chains of size $n-1$. Therefore the sequence of completely $Q$-reducible spaces (under the successive isotropy subgroups) appearing in Theorem \ref{th-decomposition} is $V_{n}, V_{n-1},\dots,V_{1}$.

\end{example}

\begin{example} \label{example-Mortajine} Let $G=GL(2)\times Spin(10)\times {\bb C}^*$ where $Spin(10)$ is the Spin group in dimension $10$. Consider the representation $[\Lambda_{1}\otimes \rho \otimes Id]\oplus [Id\otimes Spin\otimes \Box]$ of $G$ where $\Lambda_{1}$ is the natural $2$-dimensional representation of $GL(2)$, where $\rho$ is the vector representation of $Spin(10)$, where $Spin$ is the half-spin representation of $Spin(10)$, and where $\Box $ is the natural representation  by multiplication of ${\bb C}^*$ on ${\bb C}$. 
 
This representation is a $PV$ whose generic isotropy subgroup is isomorphic to the exceptional simple Lie group $G_{2}$ (see (42) p. 397 of \cite{Kimura-3}). Another argument to prove the prehomogeneity and the regularity is to remark that it corresponds to a $PV$ of parabolic type in $E_{8}$ (see section 4) and that the corresponding grading element is the semi-simple element of an ${\go{sl}}_{2}$-triple (see \cite{Mortajine}, case $E_{8}^3$ in Proposition 6.2.4 a) p.134).
The irreducible subspace $V_{2}$ corresponding to the Spin representation is not regular (Proposition 31 p. 121 in \cite{Sato-Kimura}). The irreducible subspace $V_{1}\simeq {\bb C}^{20}$ of the representation $[\Lambda_{1}\otimes \rho \otimes Id]$ is well known to be regular. Its generic isotropy subgroup is locally isomorphic to $SO(2)\times SO(8)\times {\bb C}^*$ (\cite{Sato-Kimura}, (15) p.145) and the representation $(G_{2}=SO(2)\times SO(8)\times {\bb C}^*, V_{2})$ is regular by Proposition \ref{prop-components}.

\end{example}

 \section{Classification of $Q$-irreducible reductive regular PV's  of parabolic type}
 
 \subsection{PV's of parabolic type} At this point a brief summary of
the theory of $PV$'s of Parabolic type is needed.
 
 The $PV$'s of parabolic type where introduced by the author in \cite{rub-note-PV}, \cite{rub-kyoto}  (see also \cite{rub-these-etat} and \cite{rub-bouquin-PV})

Let ${\go g}$ be a simple complex Lie algebra. Let ${\go  h}$ be a Cartan
subalgebra of ${\go g}$ and denote by $\Sigma$ the set of roots
of $({\go g},{\go  h})$. As usually, for $\alpha\in \Sigma$, we
denote by $H_\alpha$ the corresponding co-root in ${\go  h}$.  We fix once
and for all a system of simple roots
$    \Psi $ for $\Sigma$. We denote by $\Sigma^+$ 
(resp. $\Sigma^-$) the corresponding set of positive (resp. negative)
roots in $\Sigma  $. Let
$\theta$ be a
subset of
$\Psi$ and let us   make the standard construction of the   parabolic subalgebra ${\go 
p}_\theta
\subset
{\go g}$ associated to $\theta$. As usual we   denote by $\langle
\theta\rangle$ the set of all roots which are linear combinations of elements in
$\theta$, and put $\langle \theta\rangle^\pm=\langle \theta\rangle\cap \Sigma^\pm$.

Set
\begin{align*} {\go  h}_\theta=\theta^\bot=\{X\in {\go  h}\,|\,\alpha(X)=0
\;\; \forall \alpha\in \theta\},\qquad   &{\go  h}(\theta) =\sum_{\alpha\in \theta} {\bb 
C}H_\alpha\\
  {\go  l_\theta}={\go  z}_{{\go g}}({\go  h}_\theta)= {\go  h}\oplus
\sum_{\alpha\in \langle \theta\rangle} {\go g}^\alpha,\qquad  \qquad  \qquad \qquad  &{\go 
n}_\theta^\pm=\sum_{\alpha\in \Sigma^\pm\setminus
\langle\theta\rangle^\pm} {\go g}^\alpha
\end{align*}

Then ${\go  p}_\theta= {\go  l}_\theta \oplus {\go  n}_\theta^+$ is called the standard
parabolic subalgebra  associated to $\theta$. There is also a standard ${\bb Z}$-grading
of ${\go g}$ related to these data. Define $H_\theta$ to be the unique element
of ${\go  h}_\theta$ satisfying the linear equations
$$\alpha(H_\theta)=0 \quad \forall \alpha\in \theta\quad {\rm and}  $$
$$\alpha(H_\theta)=2 \quad \forall \alpha\in \Psi\setminus \theta.\quad    $$
The before mentioned grading is just the grading obtained from the eigenspace
decomposition of $\ad H_\theta$:
$$d_p(\theta)=\{X\in {\go g} \,|\,[H_\theta,X]=2pX\}.$$
Then we obtain easily:
$${\go g}=\oplus_{p\in{\bb Z}}d_p(\theta),\quad{\go 
l}_\theta=d_0(\theta),\quad{\go  n}_\theta^+=\sum_{p\geq 1} d_p(\theta),\quad{\go 
n}_\theta^-=\sum_{p\leq -1} d_p(\theta).$$
It  is known that $({\go  l}_\theta,d_1(\theta))$
is a prehomogeneous vector space (in fact all the spaces $({\go 
l}_\theta,\,d_p(\theta))$ with ${p\not =0}$ are prehomogeneous, but there is no loss of
generality if we only consider $({\go  l}_\theta,\,d_1(\theta)))$. These  spaces have
been called prehomogeneous vector spaces of parabolic type (\cite{rub-note-PV}). There are in general
neither irreducible nor regular. But they are of particular interest, because in the
parabolic context, the group (or more precisely its Lie algebra ${\go  l}_\theta$) and
the space (here $d_1(\theta))$ of the $PV$ are embedded into  a rich structure, namely the
simple Lie algebra
${\go g}$. For example the derived representation of the $PV$ is just the   adjoint
representation  of
${\go  l}_\theta$ on $d_1(\theta)$. Moreover the Lie algebra ${\go g}$ also
contains the dual $PV$, namely $({\go  l}_\theta,d_{-1}(\theta))$.

It may be worthwhile  noticing also that 
$d_1(\theta)=\sum_{\beta\in\sigma_1}{\go g}^\beta$, where $\sigma_1$ is the set
of roots which belong to the set $(\Psi \setminus \theta)\,+\,{\bb Z} \theta $, where ${\bb Z}\theta$ is the ${\bb Z}$--span
of $\theta$.

As these $PV$'s are in one to one correspondence with the subsets $\theta \subset \Psi$, we
make the convention to describe them by the mean of the following weighted Dynkin diagram:

\begin{definition}\label{def-diagramme}  
 The diagram of the $PV$ $({\go  l}_\theta,d_1(\theta))$
is the Dynkin diagram of $({\go g},{\go  h})$ $($or $\Sigma$ $)$, where the
vertices corresponding to the simple roots of $\Psi\setminus \theta$ are circled $($see an example
below$)$.

\end{definition}

 This very simple classification  by means of diagrams contains nevertheless some
immediate and  interesting informations concerning the $PV$ $({\go  l}_\theta,d_1(\theta))$ (for all these
facts, see \cite{rub-note-PV}, \cite{rub-kyoto} or \cite{rub-these-etat}):

 \noindent $\bullet$ The Dynkin diagram of $ {\go  l}_\theta'=[{\go 
l}_\theta,{\go  l}_\theta]   $ (\ie the semi-simple part of the Lie algebra of the group)
is the Dynkin diagram of ${\go g}$ where we have removed the circled vertices
and the edges connected to these vertices.

\noindent $\bullet$ In fact as a Lie algebra ${\go  l_\theta ={\go 
l}_\theta}'\oplus{\go  h}_\theta$ and $\dim {\go  h}_\theta=$ the number of circled
vertices.

\noindent $\bullet$ The number of irreducible components of the representation $({\go l
}_\theta,\,d_1(\theta))$ is also equal to the number of circled roots. More precisely, if
$\alpha$ is a (simple) circled root, then any nonzero root vector $X_\alpha\in 
{\go g}^\alpha$ generates an irreducible ${\go  l}_\theta$--module $V_\alpha$,
and $d_1(\theta)=\oplus_{\alpha\in\Psi\setminus \theta} V_\alpha$ is the decomposition of
$d_1(\theta)$ into irreducibles.

In fact  the decomposition of the representation (${\go  l}_\theta ,d_1(\theta)$) into
irreducibles can also be described by using the eigenspace  decomposition with respect to
$\ad({\go  h}_\theta)$. Let me explain this. For each $\alpha\in {\go  h}^*$, let
$\overline{\alpha}$ be the restriction of $\alpha$ to ${\go  h}_\theta$ and define 
$${\go g}^{\overline{\alpha}}=\{X\in {\go g}\,|\, \forall H \in {\go 
h}_\theta ,\, [H,X]= \overline{\alpha}(H)X\}.$$
Then ${\go g}^{\overline 0}={\go  l}_\theta$ and for $\alpha \in \Psi \setminus
\theta$, we have $V_\alpha = {\go g}^{\overline{\alpha}}$. Hence we can write
$$d_1(\theta)=\oplus_{\alpha\in \Psi\setminus \theta}{\go g}^{\overline{\alpha}}.$$

 Moreover one can notice (always for $\alpha \in \Psi\setminus \theta$) that
$V_\alpha={\go g}^{\overline{
\alpha}}=\sum_{\beta\in\sigma_1^\alpha}{\go g}^\beta$, where
$\sigma_1^\alpha$ is the set of roots which belong to $\alpha\,+\,\langle \theta\rangle$.

\noindent $\bullet$ Moreover one can directly read the highest weight of $V_\alpha$ from
the diagram. The highest weight of $V_\alpha$ relatively to the Borel sub-algebra ${\go 
b}_\theta^-={\go  h}\oplus\sum_{\alpha\in\langle \theta\rangle^-}{\go g}^\alpha$ is $\overline{\alpha}=\alpha_{|_{{\go  h}(\theta)}}$. Let $\omega_\beta$
$(\beta\in \theta)$ be the fundamental weights of ${\go  l}_\theta'$ (\ie the dual basis
of $(H_\beta)_{\beta\in \theta}$). For each circled  root $\alpha$ (\ie for
each
$\alpha\in \Psi\setminus \theta $\,), let $J_\alpha=\{(\beta_i)\} $ be the set of roots
in $\theta$ (= non-circled) which are connected to $\alpha$ in the diagram. From
elementary diagram considerations we know that $J_\alpha$ may be empty and that there are
always less than
$3$ roots in
$J_\alpha$.

          If $J_\alpha=\emptyset$, then $V_\alpha$ is the trivial one dimensional
representation of ${\go  l}_\theta$. 
  
If $J_\alpha\not =\emptyset$, then
$\overline{\alpha}=\sum_{i\in J_\alpha} c_i\omega_{\beta_i}$ where
$c_i=\alpha(H{_{\beta_i}})$ and $  \alpha( H_{\beta_i})$ can be computed as follows:

$$ (R)
\begin{cases}
\text{ if  } ||\alpha||\leq||\beta_i||  , \text{ then }  \alpha(H_{\beta_i})=-1\ ;\\

\text{ if  }   ||\alpha||>||\beta_i||\text{  and if } \alpha \text{ and  } \beta_i \text{ are
connected by }
 j \text{ arrows }  ( 1\leq j\leq 3), \\
  \quad\text{ then }  \alpha(H_{\beta_i})=-j\ . 
\end{cases}
$$

\medskip\medskip

Let us illustrate this with an example.

\begin{example} \label{exdiagram}  Consider the following diagram:
\begin{align*}\hbox{\unitlength=0.5pt
\begin{picture}(200,30)
\put(10,10){\circle*{10}}
\put(10,10){\circle{18}}
\put(7,-15){$\alpha_1$}
\put(15,10){\line (1,0){30}}
\put(50,10){\circle*{10}}
\put(50,10){\circle{18}}
\put(47,-15){$\alpha_2$}
\put(54,12){\line (1,0){41}}
\put(54,8){\line (1,0){41}}
\put(66,5.5){$>$}
\put(100,10){\circle* {10}}
\put(95,-15){$\beta_1$}
\put(105,10){\line (1,0){30}}
\put(140,10){\circle*{10}}
\put(135,-15){$\beta_2$}
\end{picture}} 
\end{align*}
 
\medskip
The preceding diagram is the diagram of a $PV$ of parabolic type inside ${\go g}\simeq F_4$.
 Here we have $\theta=\{\beta_1,\beta_2\}$ and $\Psi\setminus
\theta=\{\alpha_1,\alpha_2\}$. The Lie algebra
${\go  l}_\theta$ is isomorphic to
$A_2\oplus{\go  h}_\theta$ where $\dim {\go  h}_\theta$ = number of circled roots = $2$.
As $J_{\alpha_1}=\emptyset$, the representation of ${\go l }_\theta'$ on $V_{\alpha_1}$
is the trivial representation. Hence the action of $ {\go  l}_\theta$ on $V_{\alpha_1}$
reduces to the character of ${\go  h}_\theta$ given by the restriction of the root
$\alpha_1$ to ${\go  h}_\theta$. On the other hand we have $J_{\alpha_2}=\{\beta_1\}$.
Therefore, applying the rules $(R)$ above, we see that  $V_{\alpha_2}$ is the irreducible $ A_2 $--module with highest
weight $-2\omega_1$, where $\{\omega_1,\omega_2\}$ is the set of fundamental weights of
$A_2$ corresponding to $\beta_1$ and $\beta_2$. Again the action of ${\go  h}_\theta$ on
$V_{\alpha_2}$ is scalar with eigenvalue the restriction of $\alpha_2$ to ${\go 
h}_\theta$.
\end{example}

\medskip\medskip\medskip

One can prove (\cite {rub-note-PV}) that the $PV$ of parabolic type $({\go  l}_\theta,d_1(\theta))$
is irreducible if and only if ${\go  p}_\theta$ is a maximal parabolic subalgebra, {\it
i.e.} if and only if $\Psi \setminus \theta$ is reduced to  a single root $\alpha_1$.

  The $PV$'s of parabolic type which are
irreducible and regular were classified by the list of the    "weighted" Dynkin
diagram of
${\go g}$, where the root $\alpha_1$ in the discussion above is circled.
This classification was announced  first in \cite{rub-note-PV}  and then given explitly  in \cite{rub-kyoto} and \cite{rub-these-etat} (see also the book \cite{rub-bouquin-PV}).

\begin{rem}
Of course the irreducible regular $PV$'s of parabolic type are $Q$-irreducible. Therefore in order to complete the classification of the $Q$-irreducible $PV$'s of parabolic type, it is enough to consider only $PV$'s which are reducible. This will be done in the sequel of the paper.
\end{rem} 

For further use we need also to introduce the notion of subdiagram of the weighted Dynkin diagram associated to $(\Psi,\theta)$. Let $\Gamma$ be a subset of $\Psi\setminus \theta$, that is a subset of the circled roots. For $\alpha \in \Gamma$ define $\Psi_{\alpha}$ to be the connected component of $\theta\cup \{\alpha\}$ containing $\alpha$. Define then
$$\Psi_{\Gamma}=\cup_{\alpha\in \Gamma}\Psi_{\alpha}\text{ and }\theta_{\Gamma}=\theta\cap \Psi_{\Gamma}.$$

\begin{definition}\label{def-sous-diag} \hfill{}

 The weighted Dynkin diagram associated to the pair $(\Psi_{\Gamma},\theta_{\Gamma})$ is called a subdiagram of the diagram associated to $(\Psi,\theta)$.
 \end{definition}
 
 It can be noticed that a subdiagram is just a subset $\Gamma$ of the circled roots togeteher with the non-circled roots which are connected to a root in $\Gamma$ (through a path in the non-circled roots). It may also be noticed that the subdiagrams of a connected diagram are not necessarily connected. Let us give an example.
 
 \begin{example}\label{examplesubdiagrams}: Consider the following weighted diagram in
$D_9$

\medskip\medskip 
  \hbox{\unitlength=0.5pt
\begin{picture} (0,0) (-150,20)
\put(-50,5){$D=$}\put(10,10){\circle*{10}}
\put(5,-15){$\beta_1$}

\put(15,10){\line (1,0){30}}
\put(50,10){\circle*{10}}
\put(45,-15){$\alpha_1$}
\put(50,10){\circle{18}}
\put(55,10){\line (1,0){30}}
\put(90,10){\circle*{10}}
\put(85,-15){$\alpha_2$}
\put(90,10){\circle {18}}
\put(95,10){\line (1,0){30}}
\put(130,10){\circle*{10}}
\put(125,-15){$\beta_2$}
\put(135,10){\line (1,0){30}}
\put(170,10){\circle*{10}}
 
\put(165,-15){$\alpha_3$}
\put(170,10){\circle {18}}
\put(175,10){\line (1,0){30}}
\put(210,10){\circle*{10}}
\put(205,-15){$\beta_3$}
\put(215,10){\line (1,0){30}}
\put(240,10){\circle*{10}}
 
\put(240,14){\line (1,1){20}}
\put(265,36){\circle*{10}}
\put(280,36){$\alpha_4$}
\put(265,36){\circle {18}}
\put(250,7){$\beta_4$}
\put(240, 6){\line(1,-1){20}}
\put(265,-16){\circle*{10}}
\put(275,-20){$\beta_5$}
\end{picture}
} 
\medskip\medskip\medskip\medskip\medskip
  
where $\theta=\{\beta_1,\beta_2,\beta_3,\beta_4,\beta_5\}$ and $\Psi\setminus
\theta=\{\alpha_1,\alpha_2,\alpha_3,\alpha_4\}$.

We have:
\hbox{\unitlength=0.5pt
\begin{picture}(0,0)(-90,20)
\put(-50,-10){$ \theta\cup\{\alpha_1\}=$ 
 \put(10,5){\circle*{10}}
\put(5,-20){$\beta_1$}

\put(15,5){\line (1,0){30}}
\put(50,5){\circle*{10}}
\put(45,-20){$\alpha_1$}
\put(50,5){\circle{18}}}
 
\put(145,-5){\circle*{10}}
\put(140,-35){$\beta_2$}
\put(180,-5){\circle*{10}}
\put(175,-35){$\beta_3$}
\put(185,-5){\line (1,0){30}}
 \put(220,-5){\circle*{10}}
\put(215,-35){$\beta_4$}
\put(225,-5){\line (1,0){30}}
\put(260,-5){\circle*{10}}
\put(255,-35){$\beta_5$}
\end{picture}}

 \vskip 40pt
Therefore the irreducible subdiagram associated to $\{\alpha_1\}$ is given by:
\hbox{\unitlength=0.5pt
\begin{picture}(0,0)(-90,20)
\put(-500,-36){$D_{\{\alpha_1\}}=$}
 \put(-395,-30){\circle*{10}}
\put(-400,-60){$\beta_1$}
\put(-390,-30){\line (1,0){30}}
\put(-355,-30){\circle*{10}}
\put(-360,-60){$\alpha_1$}
\put(-355,-30){\circle{18}} 
\end{picture} }
\vskip 50pt

Similarly the   subdiagrams of $D$ corresponding to $\Gamma=\{\alpha_{1}, \alpha_{4}\}$ and $\Gamma=\{\alpha_{3},\alpha_{4}\}$ are respectively: 

  \hbox{\unitlength=0.5pt
\begin{picture} (0,0) (-150,20)
\put(-40,5){$D_{\{\alpha_{1},\alpha_{4}\}}=$}
\put(75,10){\circle*{10}}
\put(65,-15){$\beta_1$}

\put(80,10){\line (1,0){30}}
\put(120,10){\circle*{10}}
\put(105,-15){$\alpha_1$}
\put(120,10){\circle{18}}
 
\put(205,10){\circle*{10}}
\put(195,-15){$\beta_3$}
\put(210,10){\line (1,0){30}}
\put(240,10){\circle*{10}}
 
\put(240,14){\line (1,1){20}}
\put(265,36){\circle*{10}}
\put(280,36){$\alpha_4$}
\put(265,36){\circle {18}}
\put(250,7){$\beta_4$}
\put(240, 6){\line(1,-1){20}}
\put(265,-16){\circle*{10}}
\put(275,-20){$\beta_5$}
\end{picture}
} 
\vskip 30pt

  \hbox{\unitlength=0.5pt
\begin{picture} (0,0) (-150,20)
\put(15,5){$D_{\{\alpha_{3},\alpha_{4}\}}=$}
  
\put(130,10){\circle*{10}}
\put(125,-15){$\beta_2$}
\put(135,10){\line (1,0){30}}
\put(170,10){\circle*{10}}
 
\put(165,-15){$\alpha_3$}
\put(170,10){\circle {18}}
\put(175,10){\line (1,0){30}}
\put(210,10){\circle*{10}}
\put(205,-15){$\beta_3$}
\put(215,10){\line (1,0){30}}
\put(240,10){\circle*{10}}
 
\put(240,14){\line (1,1){20}}
\put(265,36){\circle*{10}}
\put(280,36){$\alpha_4$}
\put(265,36){\circle {18}}
\put(250,7){$\beta_4$}
\put(240, 6){\line(1,-1){20}}
\put(265,-16){\circle*{10}}
\put(275,-20){$\beta_5$}
\end{picture}
}

\vskip 30pt

 \end{example}

\begin{definition} A weighted Dynkin diagram will be called {\it regular} (resp. {\it Q-irreducible}) if the corresponding $PV$ of parabolic type is regular (resp. $Q$-irreducible).
\end{definition}

 
 \subsection{Classification of classical $Q$-irreducible reductive regular PV's of parabolic type}\hfill{}
 
 We adopt the following numbering of the roots for classical simple Lie algebras

\vskip 20pt

\hbox{\unitlength=0.5pt
\begin{picture}(300,30)
\put(90,10){\circle*{10}}
\put(85,-10){$\alpha_1$}
\put(95,10){\line (1,0){30}}
\put(130,10){\circle*{10}}
\put(140,10){\circle*{1}}
\put(145,10){\circle*{1}}
\put(150,10){\circle*{1}}
\put(155,10){\circle*{1}}
\put(160,10){\circle*{1}}
\put(165,10){\circle*{1}}
\put(170,10){\circle*{1}}
\put(175,10){\circle*{1}}
\put(180,10){\circle*{1}}
 \put(195,10){\circle*{10}}
 \put(195,10){\line (1,0){30}}
\put(230,10){\circle*{10}}

\put(235,10){\line (1,0){30}}
\put(270,10){\circle*{10}}
\put(260,-10){$\alpha_n$}

\put(430,10){$A_{n}$}
\end{picture}
}

\medskip\medskip

\medskip
\hbox{\unitlength=0.5pt
\begin{picture}(300,30)(-82,0)
\put(10,10){\circle*{10}}
\put(0,-10){$\alpha_1$}
 
\put(15,10){\line (1,0){30}}
\put(50,10){\circle*{10}}
\put(55,10){\line (1,0){30}}
\put(90,10){\circle*{10}}
\put(95,10){\line (1,0){30}}
\put(130,10){\circle*{10}}
\put(135,10){\circle*{1}}
\put(140,10){\circle*{1}}
\put(145,10){\circle*{1}}
\put(150,10){\circle*{1}}
\put(155,10){\circle*{1}}
\put(160,10){\circle*{1}}
\put(165,10){\circle*{1}}
\put(170,10){\circle*{10}}
\put(174,12){\line (1,0){41}}
\put(174,8){\line (1,0){41}}
\put(190,5.5){$>$}
\put(220,10){\circle*{10}}
\put(210,-10){$\alpha_n$}
\put(355,10){$B_n$ }
\end{picture}
}

\medskip\medskip

 \hbox{\unitlength=0.5pt
\begin{picture}(300,30)(-82,0)
\put(10,10){\circle*{10}}
\put(0,-10){$\alpha_1$}

\put(15,10){\line (1,0){30}}
\put(50,10){\circle*{10}}
\put(55,10){\line (1,0){30}}
\put(90,10){\circle*{10}}
\put(95,10){\circle*{1}}
\put(100,10){\circle*{1}}
\put(105,10){\circle*{1}}
\put(110,10){\circle*{1}}
\put(115,10){\circle*{1}}
\put(120,10){\circle*{1}}
\put(125,10){\circle*{1}}
\put(130,10){\circle*{1}}
\put(135,10){\circle*{1}}
\put(140,10){\circle*{10}}
\put(145,10){\line (1,0){30}}
\put(180,10){\circle*{10}}
\put(184,12){\line (1,0){41}}
\put(184,8){\line(1,0){41}}
\put(190,5.5){$<$}
\put(220,10){\circle*{10}}
\put(210,-10){$\alpha_n$}
 
\put(355,10){$C_{n}$} 
\end{picture}
}

\medskip\medskip\medskip

\hbox{\unitlength=0.5pt
\begin{picture}(300,40)(-82,-10)
\put(10,10){\circle*{10}}
 \put(0,-10){$\alpha_1$}
\put(15,10){\line (1,0){30}}
\put(50,10){\circle*{10}}
\put(55,10){\line (1,0){30}}
\put(90,10){\circle*{10}}
\put(95,10){\line (1,0){30}}
\put(130,10){\circle*{10}}
\put(140,10){\circle*{1}}
\put(145,10){\circle*{1}}
\put(150,10){\circle*{1}}
\put(155,10){\circle*{1}}
\put(160,10){\circle*{1}}
\put(170,10){\circle*{10}}
 
\put(175,10){\line (1,0){30}}
\put(210,10){\circle*{10}}
\put(217,5){$\alpha_{n-2}$}
\put(215,14){\line (1,1){20}}
\put(240,36){\circle*{10}}
\put(247,34){$\alpha_{n-1}$}
\put(215,6){\line (1,-1){20}}
\put(240,-16){\circle*{10}}
\put(247,-20){$\alpha_{n}$}
\put(355,10){$D_n$ }
\end{picture}
}
\vskip 20pt 

The classification of $Q$-irreducible $PV$'s in the classical simple Lie algebras need now some technical lemmas. If $\omega_{i}$ is the fundamental weight corresponding to the root $\alpha_{i}$, we denote by $\Lambda_{i}({\go g})$ the corresponding representation of ${\go g}$. If this representation can be lifted to a group $G$ with Lie algebra ${\go g}$, we will denote by $\Lambda_{i}(G)$ the lifted representation of $G$. For example   we will denote by $\Lambda_{1}(GL(n))$ (resp. by $\Lambda_{n}(GL(n))$) the natural representation of $GL(n)$ on ${\bb C}^n$ (resp. the dual of the natural representation of $GL(n)$ on ${\bb C}^n$).

\begin{lemma}\label{lemme-GXGL} Let $G$ be a simple classical group. Let $d_{1}=\dim \Lambda_{1}(G)$. Let $n\leq d_{1}$ and consider the $PV$ $(G\times GL(n),\Lambda_{1}(G)\otimes \Lambda_{n}(GL(n)))$ $($it is a $PV$ because it is parabolic$)$. Then either this $PV$ is regular, or there exists a normal unipotent subgroup of the generic isotropy subgroup which is included in $G$.
\end{lemma}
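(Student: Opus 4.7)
The plan is to compute the generic isotropy of $(G\times GL(n),\Lambda_{1}(G)\otimes\Lambda_{n}(GL(n)))$ and then, according to the type of $G$, either verify that it is reductive (and conclude regularity via Proposition \ref{prop.PV-regulier-non-connexe}) or exhibit a nontrivial unipotent subgroup of the isotropy that is normal in it and is contained in $G\times\{I\}$.

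First I would identify $V$ with the space $M(d_{1},n)$ of $d_{1}\times n$ matrices, on which $(g,h)\in G\times GL(n)$ acts by $X\mapsto gXh^{-1}$. A generic element $X_{0}$ has rank $n$; set $W_{0}=X_{0}({\bb C}^n)\subset{\bb C}^{d_{1}}$, an $n$-dimensional subspace in generic position with respect to the defining bilinear form of $G$ (if any). The relation $gX_{0}=X_{0}h$ identifies the isotropy via the first projection with $G_{W_{0}}=\{g\in G\,|\,g(W_{0})=W_{0}\}$, the coordinate $h$ being recovered from $g|_{W_{0}}$ via the isomorphism $X_{0}\colon{\bb C}^n\to W_{0}$.

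I would then treat the classical families in turn. For $G=SL(d_{1})$, $G_{W_{0}}$ is the maximal parabolic with Levi $S(GL(W_{0})\times GL(W_{0}^{c}))$ and abelian unipotent radical $\Hom(W_{0}^{c},W_{0})$; when $n<d_{1}$ this radical is nontrivial, sits inside $G\times\{I\}$, and is normal in the isotropy, while when $n=d_{1}$ the isotropy is the reductive diagonal $GL(W_{0})$ and the PV is regular. For $G=SO(d_{1})$, generically the symmetric form restricts non-degenerately to $W_{0}$, so $G_{W_{0}}\simeq S(O(W_{0})\times O(W_{0}^{\perp}))$ is reductive. For $G=Sp(d_{1})$ with $n$ even, the symplectic form again restricts non-degenerately and $G_{W_{0}}\simeq Sp(W_{0})\times Sp(W_{0}^{\perp})$ is reductive. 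Finally, for $G=Sp(d_{1})$ with $n$ odd, the restricted form has a canonical one-dimensional radical ${\bb C}v\subset W_{0}$; the symplectic transvections $T_{t}\colon u\mapsto u+t\omega(u,v)v$ $(t\in{\bb C})$ lie in $G$, fix $W_{0}$ pointwise (since $W_{0}\subset v^{\perp}$), and are normalized by $G_{W_{0}}$ (any $g\in G_{W_{0}}$ preserves the intrinsic line ${\bb C}v$: if $gv=\lambda v$ then $gT_{t}g^{-1}=T_{\lambda^{2}t}$).

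The main obstacle I anticipate is the symplectic case with $n$ odd: one must identify the radical line of the restricted form intrinsically and check directly that the associated transvections lie in the isotropy and are normal there. Assembling the four cases then gives the dichotomy asserted by the lemma.
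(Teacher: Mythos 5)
Your proposal is correct, and it reaches the same dichotomy as the paper, but by a genuinely more self-contained route. The paper's own proof is essentially a proof by citation: for type $A_k$ it invokes an ``obvious calculation'', for types $B_k$, $D_k$ and $C_k$ with $n$ even it appeals to Table 1 of the author's earlier classification of irreducible regular parabolic $PV$'s to conclude regularity, and for $C_k$ with $n$ odd it refers to the explicit matrix computations on p.~102 of Sato--Kimura to produce the normal unipotent subgroup. You instead identify the generic isotropy uniformly as the stabilizer $G_{W_{0}}$ of a generic $n$-plane $W_{0}=X_{0}({\bb C}^n)$ (the projection to $G$ being an isomorphism onto $G_{W_{0}}$ since $X_{0}$ is injective), and then read off the answer case by case: a parabolic with unipotent radical $\Hom(W_{0}^{c},W_{0})$ for $SL(d_{1})$ with $n<d_{1}$, reductive stabilizers $S(O(W_{0})\times O(W_{0}^{\perp}))$ and $Sp(W_{0})\times Sp(W_{0}^{\perp})$ in the orthogonal and even symplectic cases, and the one-parameter group of transvections along the radical line of $\omega|_{W_{0}}$ in the odd symplectic case, whose normality you verify via $gT_{t}g^{-1}=T_{\lambda^{2}t}$. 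This is exactly the computation the paper outsources, done uniformly; what your version buys is independence from the classification tables and from Sato--Kimura's matrix manipulations, at the cost of a slightly longer argument. The only points worth making explicit to be fully rigorous are (i) that a rank-$n$ matrix whose image is in general position really lies in the open orbit (immediate from transitivity via Witt's theorem plus the fact that this set is open and dense, or from a dimension count), and (ii) the appeal to Proposition \ref{prop.PV-regulier-non-connexe} to pass between reductivity of the generic isotropy and regularity -- both routine.
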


\begin{proof}

If $G$ is of type $A_{k}$ then an obvious calculation shows the Lemma.

If $G$ is of type $B_{k}$ or $D_{k}$, then we know from table $1$ in \cite{rub-kyoto}, that the given $PV$ is always regular.

The same argument holds if $G$ is of type $C_{k}$ and if $n$ is even.

If $G$ is of type $C_{k}$ and if $n$ is odd, the space is not regular and the calculations made at p. $102$ of \cite{Sato-Kimura} show the assertion concerning the normal unipotent subgroup.

\end{proof}

\begin{lemma}\label{lemme-GLXGLXG}Let $G$ be a reductive algebraic group and let $\Lambda$ be an  representation of $G$ of dimension $r$. Let $p$ and $q$ be two integers such that $p<q$ and $r<q$. Suppose that the representation $[\Lambda_{p-1}(GL(p))\otimes   \Lambda_{1}(GL(q))]\oplus [\Lambda_{q-1}(GL(q))\otimes \Lambda]$ of the group $GL(p)\times GL(q)\times G$ is prehomogeneous $($this is automatically the case if $p\geq r$$)$. Then:

$1)$ If $p\neq r$, the preceding $PV$ is not regular and there exists a non-trivial normal unipotent subgroup of the generic isotropy subgroup which is included in $GL(q)$.

$2)$ If $p=r$, the preceding $PV$ is regular and $1$-irreducible $($hence $Q$-irreducible from PropositionÊ $\ref{prop-irreductibilite-implications})$.
\end{lemma}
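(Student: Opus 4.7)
The plan is to identify $V_{1}\simeq\Hom(\bb C^p,\bb C^q)$ and $V_{2}\simeq\Hom(\bb C^q,\bb C^r)$ as spaces of matrices, so that $(A,B,g)\in GL(p)\times GL(q)\times G$ acts, up to character twists irrelevant to the orbit structure, by $(X,Y)\mapsto(BXA^{-1},\rho(g)YB^{-1})$, where $\rho$ realizes $\Lambda$. The composite $YX\in\Hom(\bb C^p,\bb C^r)$ then transforms as $YX\mapsto\rho(g)(YX)A^{-1}$. In each part I first normalize a generic $X_{0}=\bigl(\begin{smallmatrix}I_{p}\\ 0\end{smallmatrix}\bigr)$; its stabilizer inside $GL(p)\times GL(q)$ is the parabolic $P\subset GL(q)$ with Levi $GL(p)\times GL(q-p)$ and unipotent radical $U_{P}\cong M_{p,q-p}$ acting on $Y_{2}$ by the shear $Y_{2}\mapsto Y_{2}-Y_{1}B_{12}$, together with the identification $A=B_{11}$.

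For part $2)$ ($p=r$), the polynomial $f(X,Y)=\det(YX)$ is a nonzero relative invariant of degree $2p$. Using the $G\times GL(p)$-action on the first $p$ columns $Y_{1}$ of $Y$, I would normalize $Y_{1}=I_{p}$ and then use $U_{P}$ to set the remaining block $Y_{2}=0$. A direct computation shows the isotropy of the resulting $(X_{0},Y_{0})$ to be isomorphic to $G\times GL(q-p)$, hence reductive, so Proposition \ref{prop.PV-regulier-non-connexe}\,v) yields regularity. For $1$-irreducibility, I would observe that the open orbit is characterized by the two conditions $\rank(X)=p$ (codimension $q-p+1\geq 2$) and $\det(YX)\neq 0$ (codimension one), so $\{\det(YX)=0\}$ is the unique irreducible codimension-one component of the singular set. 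The $Q$-irreducibility then follows from Proposition \ref{prop-irreductibilite-implications}.

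For part $1)$ ($p\neq r$), my plan is to exhibit a nontrivial normal unipotent subgroup of the isotropy contained in $GL(q)$; non-regularity then follows from Proposition \ref{prop.PV-regulier-non-connexe}\,v). When $p>r$, the matrix $Y_{1}$ has rank at most $r<p$, so after normalizing $Y_{1}=(I_{r}\mid 0)$ the shear kernel $\{B_{12}\in M_{p,q-p}:Y_{1}B_{12}=0\}$ has positive dimension $(p-r)(q-p)$. This gives a unipotent subgroup of the isotropy sitting inside $GL(q)$, and its normality follows from the conjugation formula $B_{12}\mapsto B_{11}B_{12}B_{22}^{-1}$ together with the block-lower-triangular shape of $B_{11}$ forced by the stabilization of $Y_{1}$. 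When $p<r$ the naive unipotent vanishes because $Y_{1}$ has full column rank, and the construction is more delicate: I would first use $U_{P}$ to bring $Y_{2}$ to the form $\bigl(\begin{smallmatrix}0\\ Y_{2}^{(2)}\end{smallmatrix}\bigr)$ with $Y_{2}^{(2)}\in M_{r-p,q-p}$, reducing the analysis to a residual PV on $Y_{2}^{(2)}$ acted on by $GL(q-p)$ together with a subgroup $G'\subset G$ stabilizing the image of $Y_{1}$. This residual PV is again of the same rectangular-matrix type but with strictly smaller parameters; its failure to be regular (by induction on $p+r$, or by a direct construction inside $G'$ using the parabolic arising from the $\Lambda$-stabilizer) produces a nontrivial unipotent in $GL(q-p)\subset GL(q)$, whose normality in the full isotropy is verified by computing the conjugation action of the Levi and of $U_{P}$ on it.

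The main obstacle will be the case $p<r$: the required unipotent is not visible from the shear action alone, and one must descend to the residual PV on $Y_{2}^{(2)}$ and carefully track which pieces of $GL(q)$ survive in the isotropy. The case $p>r$ is comparatively straightforward, yielding to the direct computation with the kernel of the shear described above.
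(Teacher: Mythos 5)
Your normalization $X_{0}=\bigl(\begin{smallmatrix}I_{p}\\ 0\end{smallmatrix}\bigr)$, your treatment of the case $p>r$, and your regularity argument for $p=r$ all coincide with the paper's proof: the paper takes the same generic representatives, and your ``kernel of the shear'' is exactly its normal unipotent block $D_{2}\in M(p-r,q-p)$. Two points, however, are genuine gaps.

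First, the case $p<r$, which you yourself flag as the main obstacle, is not actually carried out, and the inductive scheme you propose does not apply: the residual PV on $Y_{2}^{(2)}\in M(r-p,q-p)$ is a single tensor product for $G'\times GL(q-p)$, with $G'$ a (possibly non-reductive) stabilizer inside $G\times GL(p)$, not another two-block chain of the form treated by the lemma, so ``induction on $p+r$'' has nothing to recurse on. No induction is needed, though: since $r-p<q-p$, a generic $Y_{2}^{(2)}$ has kernel $K\subset\C^{q-p}$ of dimension $q-r>0$, and $\{D\in GL(q-p):\ (D-I)(\C^{q-p})\subseteq K,\ (D-I)|_{K}=0\}\cong M(q-r,r-p)$ is a nontrivial normal unipotent subgroup of the residual isotropy, hence (after adding back the uniquely determined shear $B$ with $Y_{1}B=Y_{2}(I-D)$) of the full generic isotropy, sitting inside $GL(q)$. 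This is precisely what the paper does directly: it exhibits the generic representative $Y_{0}=(y_{0}\mid 0)$ with $y_{0}\in GL(r)$ and reads off the normal unipotent block $D_{2}\in M(q-r,r-p)$ in lower-triangular position.

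Second, in the case $p=r$ your argument for $1$-irreducibility is incomplete. The codimension count shows that the singular set equals $\{\det(YX)=0\}$ and is a hypersurface, but not that this hypersurface is \emph{irreducible}; that amounts to $\det(YX)$ being (a power of) an irreducible polynomial for $p<q$, which is essentially the content of $1$-irreducibility and does not follow from $\{\rank X<p\}$ having codimension $\geq 2$. The paper instead counts fundamental relative invariants by a character computation: the subgroup $G_{1}$ generated by the generic isotropy $\{(g_{3},\operatorname{diag}(g_{3},D),g_{3})\}$ and by the commutator subgroup $SL(p)\times SL(q)\times G'$ has codimension one, so the character lattice of relative invariants has rank one and there is exactly one fundamental relative invariant. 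You should either reproduce that computation or supply a separate proof that $\det(YX)$ is irreducible.
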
  

\begin{proof} As $G$ only acts through its representation $\Lambda(G)$, we can assume that $G\subset GL(r)$. The space of the representation is  $M(q,p)\oplus M(r,q)$ (where $M(u,v)$ stands for the space of $u\times v$ matrices), and the group $GL(p)\times GL(q)\times G$ acts by
$$(g_{1},g_{2},g_{3})(X,Y)=(g_{2}Xg_{1}^{-1}, g_{3}Yg_{2}^{-1})$$
where $g_{1}\in GL(p), g_{2}\in GL(q), g_{3}\in G, X\in M(q,p), Y\in M(r,q).$

As usually we denote by $\Omega$ the open orbit in $M(q,p)\oplus M(r,q)$.
\vskip 5pt 
$ \bullet$  {\it Suppose first that $p<r$}.

As the representation is supposed to be prehomogeneous, we know from Proposition \ref{prop-components} that the open orbits of the components are the matrices of maximal rank in $M(q,p)$ and $M(r,q)$ respectively. Let $X_{0}=\left[\hskip -5pt\begin{array}{c}
I_{p}\\
0\end{array}\hskip -5pt\right] \in M(q,p)$ where $I_{p}$ is the identity matrix of size $p$. An easy calculation shows that the isotropy subgroup of $(X_{0},0)\in M(q,p)\oplus M(r,q)$ is the set of matrices of the form:
$$(g_{1}, \left [\hskip -5pt\begin{array}{cc}g_{1}&B\\
0&D\end{array}\hskip -5pt \right], g_{3}),\,\,  g_{1}\in GL(p), D\in GL(q-p), B\in M(p,q-p), g_{3}\in G.$$
It can also be easily seen that that the set $\cal O$ of matrices of the form
$$\left[\hskip -5pt\begin{array}{c} u\,|\,0
 \end{array}\hskip -5pt \right] . \left [\hskip -5pt\begin{array}{cc}g_{1}&B\\
0&D\end{array}\hskip -5pt\right]$$
where $ g_{1}\in GL(p), D\in GL(q-p), B\in M(p,q-p), u\in GL(r), \left[\hskip -5pt\begin{array}{c} u\,|\,0
 \end{array}\hskip -5pt \right]\in M(r,q)$ contains a  Zariski open subset  of  $M(r,q)$.
 
 Therefore  ${\cal O}\cap \{m \in M(r,q)\,|\, (X_{0},m)\in \Omega\}\neq \emptyset$

This implies  that there exists a generic element of the form $(X_{0},Y_{0})$ where $Y_{0}=(y_{0},0)$ with $y_{0}\in GL(r)$.
Again a simple calculation shows that the isotropy subgroup of $(X_{0},Y_{0})$ is the set of triplets  of the form:
$$(g_{1}, \begin{array}{c}\left[\hskip -3pt\begin{tabular}{l|c}
$\begin{array}{cc}g_{1}&B_{1}\\
0&D_{1}\end{array}$&0\\
\hline
\begin{tabular}{c c}
\hskip 3pt 0&$D_{2}$
\end{tabular}&$D_{3}$
\end{tabular} \hskip -3pt\right ]
\end{array}, g_{3})$$
where $g_{1}\in GL(p), D_{1}\in GL(r-p), D_{2}\in M(q-r,r-p), D_{3}\in GL(q-r), g_{3}\in G\subset GL(r)$ and where 
$$y_{0}.\left [\hskip -5pt\begin{array}{cc}g_{1}&B_{1}\\
0&D_{1}\end{array}\hskip -5pt\right ]=g_{3}.y_{0}.$$
It is now clear that the set of triplets  of the form
$$(I_{p}, \begin{array}{c}\left[\hskip -3pt\begin{tabular}{c|c}
 $I_{r}$&0\\
\hline
\begin{tabular}{c c}
\hskip 3pt 0&$D_{2}$
\end{tabular}&$I_{q-r}$
\end{tabular}\hskip -3pt\right ]
\end{array}, I_{r})$$
is a unipotent normal subgroup of the (generic)  isotropy subgroup of $(X_{0},Y_{0})$.

 \vskip 5pt
 
 $ \bullet$  {\it Suppose  that $p> r$}.
 
 Let $X_{0}=\left[\hskip -5pt\begin{array}{c}
I_{p}\\
0\end{array}\hskip -5pt\right] \in M(q,p)$ and let  $Y_{0}=\left[\hskip -5pt \begin{array}{rl}
I_{r}&\hskip -5pt 0 \end{array}\hskip -5pt\right] \in M(r,q)$. The isotropy subgroup of $(X_{0},Y_{0})$ is the set of triples of matrices of the form 

$$(\left[\hskip -5pt \begin{array}{cc}g_{3}&0\\
C_{1}&D_{1}\end{array}\hskip -5pt\right], \hskip -5pt\begin{array}{c}\left[\hskip -3pt\begin{tabular}{l|c}
 $\begin{array}{cc}g_{3}&0\\
C_{1}&D_{1}\end{array}$&\begin{tabular}{c}0\\
 
$D_{2}$\end{tabular}\\
\hline
\begin{tabular}{c c}
\hskip 3pt 0&\hskip 7pt $0$
\end{tabular}&$D_{3}$
\end{tabular}\hskip -3pt\right ]  \end{array}\hskip -3pt, g_{3}),$$

where $g_{3}\in G\subset GL(r), D_{1}\in GL(p-r), C_{1}\in M(p-r,r), D_{3}\in GL(q-p), D_{2}\in M(p-r, q-p)$.

A simple calculation of dimensions shows now that  the representation is prehomogeneous and that $(X_{0},Y_{0})$ is generic. Of course the set of triplets of the form 

$$(I_{p},\left[\hskip -3pt\begin{tabular}{r|l}$I_{p}$&\begin{tabular}{c}0\\
$D_{2}$\end{tabular}\\
\hline
0&$I_{q-p}$\end{tabular}\hskip -3pt\right],I_{r})$$
is a unipotent normal subgroup of the (generic)  isotropy subgroup of $(X_{0},Y_{0})$.

 \vskip 5pt
 
 $ \bullet$  {\it  Finally suppose  that $p= r$}.
 
  Let $X_{0}=\left[\begin{array}{c}
I_{p}\\
0\end{array}\right] \in M(q,p)$ and let  $Y_{0}=\left[\hskip -5pt \begin{array}{rl}
I_{p}&\hskip -5pt 0 \end{array}\hskip -5pt\right] \in M(p,q)$. The isotropy subgroup of $(X_{0},Y_{0})$ is the set of triplets of the form 
$$(g_{3}, \left[\hskip -5pt\begin{array}{cc}g_{3}&0\\
0&D
\end{array}\hskip -5pt\right],g_{3}) \eqno (*)$$ 

where $ g_{3}\in G \subset GL(p), \, D\in GL(q-p)$.

Again an easy computation of dimensions shows that this representation is prehomogeneous. As the generic isotropy subgroup is reductive,  this $PV$ is regular. 

Let $G_{1}$ be the subgroup of $GL(p)\times GL(q)\times G$ generated by a generic isotropy subgroup and by the commutator subgroup $SL(p)\times SL(q)\times G'$. The characters of the relative invariants are exactly those characters which are trivial on $G_{1}$ (this is true for any PV). From $(*)$ it is easy to see that $G/G_{1}$ is always a one dimensional torus, hence there exists only one fundamental relative invariant. One can remark that this invariant is given by $f(X,Y)=\det(YX), \, X\in M(p,q), Y\in M(q,p)$.

 \end{proof}
 
 \begin{lemma}\label{lemme-GLXGL-in-D(1)} Consider the representation 
 $$[\Lambda_{p-1}(GL(p))\otimes \Lambda_{r-1}(GL(r))]\oplus [Id(GL(p)\otimes \Lambda_{2}(GL(r))]$$
 of the group $GL(p)\times GL(r)$, with $r\geq 3$. Note that this representation is prehomogeneous since it is infinitesimally equivalent to the $PV$ of parabolic type associated to the diagram
 $$ {
\hbox{\unitlength=0.5pt
\begin{picture}(240,35)(0,-15)
\put(10,0){\circle*{10}}
\put(15,0){\line (1,0){30}}
\put(50,0){\circle*{10}}
\put(55,0){\line (1,0){30}}
\put(90,0){\circle*{10}}
\put(100,0){\circle*{1}}
\put(105,0){\circle*{1}}
\put(110,0){\circle*{1}}
\put(115,0){\circle*{1}}
\put(120,0){\circle*{1}}
\put(130,0){\circle*{10}}
\put(135,0){\line (1,0){30}}
\put(170,0){\circle*{10}}
\put(170,0){\circle{16}}
\put(165,-25){$\alpha_p$}
\put(175,0){\line (1,0){30}}
\put(210,0){\circle*{10}}
\put(220,0){\circle*{1}}
\put(225,0){\circle*{1}}
\put(230,0){\circle*{1}}
\put(235,0){\circle*{1}}
\put(240,0){\circle*{1}}
\put(250,0){\circle*{10}}
\put(255,0){\line (1,0){30}}
\put(285,0){\circle*{10}}
\put(290,4){\line (1,1){20}}
\put(312,27){\circle*{10}}
\put(312,27){\circle{16}}
\put(328,27){$\alpha_{p+r}$}
\put(290,-4){\line (1,-1){20}}
\put(312,-26){\circle*{10}}
\end{picture}\hskip 70pt\raisebox {8pt}{$D_{p+r}$}
}}
$$
\vskip 5pt
$\rm 1)$ If $r$ is odd and if $p=r-1$, this space is regular and $1$-irreducible    $($hence $Q$-irreducible from PropositionÊ $\ref{prop-irreductibilite-implications})$.

$2)$ If $r$ is odd and $p\leq r-2$, this space is not regular and there exists a non-trivial normal unipotent subgroup of the generic isotropy subgroup which is included in $SL(r)$.
 
 \end{lemma}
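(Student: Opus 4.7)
Realize the representation as $V_1 \oplus V_2$, where $V_1 = M(p,r)$ carries the action $(g_1,g_2)\cdot X = g_1 X g_2^{-1}$, and $V_2$ is the space of skew-symmetric $r\times r$ matrices with action $(g_1,g_2)\cdot Y = g_2 Y g_2^T$. Since $r$ is odd, the maximal rank of an element of $V_2$ is $r-1$, and I may take
\[
Y_0 = \begin{pmatrix} J & 0 \\ 0 & 0 \end{pmatrix}
\]
with $J$ a non-degenerate skew $(r-1)\times(r-1)$ form. A direct check shows that the stabilizer of $Y_0$ in $GL(r)$ is
\[
H = \Bigl\{ \begin{pmatrix} A & b \\ 0 & d \end{pmatrix} : A \in Sp(r-1),\ b\in\bb C^{r-1},\ d\in\bb C^* \Bigr\},
\]
with non-trivial unipotent radical $U$ corresponding to $A = I_{r-1}$, $d = 1$.

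\textbf{Proof of (1), case $p = r-1$.}
Take $X_0 = [\,I_{r-1}\ |\ 0\,] \in M(r-1, r)$. A direct block computation of $g_1 X_0 g_2^{-1} = X_0$ for $g_2 \in H$ forces $b = 0$ and $g_1 = A$, so the isotropy of $(X_0, Y_0)$ in $G = GL(r-1)\times GL(r)$ is
\[
\bigl\{\bigl(A,\,\mathrm{diag}(A, d)\bigr) : A \in Sp(r-1),\ d \in \bb C^*\bigr\} \cong Sp(r-1) \times \bb C^*,
\]
which is reductive with the correct dimension, so Proposition \ref{prop.PV-regulier-non-connexe} gives prehomogeneity and regularity. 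Since $r-1$ is even, $f(X, Y) := \mathrm{Pf}(X Y X^T)$ is a well-defined polynomial, and it transforms as $f\bigl((g_1, g_2)\cdot(X, Y)\bigr) = \det(g_1)\, f(X, Y)$. Evaluating the character $\det(g_1)^a \det(g_2)^b$ on the generic isotropy gives $d^b$ (since $\det A = 1$ on $Sp$), so the character lattice of relative invariants is $\bb Z$-rank one, generated by $\det(g_1)$. Hence $f$ is the unique fundamental relative invariant, $(G, V)$ is $1$-irreducible, and $Q$-irreducibility follows from Proposition \ref{prop-irreductibilite-implications}.

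\textbf{Proof of (2), case $p \leq r-2$.}
Using the $U$-translation and the scaling by $d$, for a generic $X$ I can normalize the last column to zero and take $X_0 = [\,I_p\ |\ 0_{p \times (r-p)}\,]$ as a generic element. Decomposing $A \in Sp(r-1)$ in the $p + (r-1-p)$ block form and solving $g_1 X_0 g_2^{-1} = X_0$ for $g_2 \in H$ gives that $A$ is block lower-triangular with $g_1 = A_{11}$ and $A_{12} = 0$, and $b = (0,\,b_2)^T$ with $b_2 \in \bb C^{r-1-p}$ arbitrary. In particular the abelian subgroup
\[
N = \biggl\{ \Bigl(I_p,\, \begin{pmatrix} I_{r-1} & \binom{0}{b_2} \\ 0 & 1 \end{pmatrix}\Bigr) : b_2 \in \bb C^{r-1-p}\biggr\}
\]
lies in the generic isotropy, is unipotent of positive dimension $r-1-p\geq 1$, and is contained in $\{I_p\}\times SL(r)$. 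A short conjugation computation using
\[
\begin{pmatrix} M & b' \\ 0 & d \end{pmatrix} \begin{pmatrix} I_{r-1} & v \\ 0 & 1 \end{pmatrix} \begin{pmatrix} M & b' \\ 0 & d \end{pmatrix}^{-1} = \begin{pmatrix} I_{r-1} & Mv/d \\ 0 & 1 \end{pmatrix},
\]
together with the fact that a block-lower-triangular $M$ maps $\{\binom{0}{b_2}\}$ into itself, shows $N$ is normal in the generic isotropy. The isotropy therefore fails to be reductive, and by Proposition \ref{prop.PV-regulier-non-connexe} the $PV$ is not regular.

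The main obstacle will be choosing $J$ compatibly with the $p + (r-1-p)$ block decomposition of $\bb C^{r-1}$ underlying the normalization of $X_0$, and verifying that the resulting intersection of $Sp(r-1)$ with the block-lower-triangular subgroup has the structure that makes the normality argument for $N$ go through in the full generic isotropy.
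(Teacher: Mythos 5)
Your argument follows essentially the same route as the paper's proof: first stabilize a maximal\--rank skew form $Y_0=\begin{pmatrix}J&0\\0&0\end{pmatrix}$, obtaining the subgroup $H$ with a symplectic block and a one\--dimensional unipotent tail, then compute the isotropy of a rank\--$p$ matrix inside $GL(p)\times H$; in case (1) you find the reductive group $Sp(r-1)\times{\bb C}^*$ and the Pfaffian invariant with a rank\--one character lattice exactly as the paper does, and in case (2) your $N$ is precisely the paper's normal unipotent subgroup. Concerning the obstacle you flag at the end, it splits into two issues of unequal weight: normality of $N$ is in fact independent of the choice of $J$, because the condition $g_1X_0=X_0g_2$ already forces the $Sp(J)$\--block $M$ of every isotropy element to be block lower triangular, hence to preserve $W=\{0\}\oplus{\bb C}^{\,r-1-p}$, and your conjugation identity then closes the argument; what genuinely depends on $J$ is only the genericity of $(X_0,Y_0)$, which by the count $\dim G-\dim G_{(X_0,Y_0)}=\dim V$ reduces to requiring that the $Sp(J)$\--orbit of $W$ in the Grassmannian be open, i.e.\ that $J|_W$ have maximal rank. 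Choosing $J$ (equivalently, the coordinates of $X_0$) with this property, which is always possible, settles the point; the paper handles the same issue implicitly by importing the explicit $Y_0$ and isotropy structure from Sato--Kimura.
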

 \begin{proof} \hfill{}
 
 The space of the representation is $V=M(r,p)\oplus Skew(r)$, where $Skew(r)$ denotes the spaces of skew-symmetric  matrices of size $r$, and the action of the group $GL(p)\times GL(r)$ is given by
 $$(g_{1},g_{2})(X,Y)=  ({^t\hskip -2ptg_{2}^{-1}}Xg_{1}^{-1}, g_{2}Y  {^t\hskip -2ptg_{2}}),$$ where $g_{1}\in GL(p), g_{2}\in GL(r), X\in M(r,p), Y\in Skew(r)$.
 From the computations in \cite{Sato-Kimura}, p. 75-76, we know that if $r=2m+1$, there exists a generic element $Y_{0}\in Skew(r)$, such that the isotropy subgroup of $(0,Y_{0})\in V$ is the set of pairs of the form
 $$(g_{1}, \left[\hskip -5pt\begin{array}{cc}A&B\\
 0&D\end{array}\hskip -5pt\right])$$
 where $g_{1}\in GL(p)$, $A\in Sp(m)$, $B\in M(2m,1)$, $D\in GL(1)$, and where $Sp(m)$ denotes the symplectic group inside $GL(2m)$.
 \vskip 5pt
  $ \bullet$  {\it Suppose  that $p= r-1$}.
  
  One shows easily that if $X_{0}=\left[\hskip -5pt\begin{array}{c}
  I_{r-1}\\
  0\end{array}\hskip -5pt\right]\in M(r,r-1)$, the isotropy subgroup of $(X_{0},Y_{0})$ is the set of pairs of matrices of the form
  $$(g_{1}, \left[\hskip -5pt\begin{array}{cc}{^tg_{1}^{-1}}&0\\
 0&D\end{array}\hskip -5pt\right]),$$ 
 where $ g_{1}\in Sp(m), D\in GL(1)$.   
 
 A simple calculation of dimensions proves then that $(X_{0},Y_{0})$ is generic. As the preceding isotropy subgroup is reductive, this $PV$ is regular. The normal subgroup $G_{1}$ of $GL(r-1)\times GL(r)$ generated by this isotropy subgroup and the commutator subgroup $SL(r-1)\times SL(r)$ is of codimension one. Therefore this $PV$ is $1$-irreducible. The fundamental relative invariant is given by $f(X,Y)= Pf(^tX.Y.X)$  $(X\in M(r,r-1), Y\in Skew(r)$), where $Pf(Z) $ denotes the Pfaffian of the skew-symmetric matrix $Z$.

  \vskip 5pt
  $ \bullet$  {\it Suppose  that $p\leq r-2$}.
  
  Set $X_{0}=\left[\hskip -5pt\begin{array}{c}
  I_{p}\\
  0\end{array}\hskip -5pt\right]\in M(r,p)$. Then the isotropy subgroup of $(X_{0},Y_{0})$ is the set of pairs of matrices of the form
  
 $$(g_{1}, \left[\hskip -3pt\begin{array}{ccc}^tg_{1}^{-1}&0&0\\
 
 X&Y&B\\
 
 0&0&D
 \end{array}\hskip -3pt\right]),$$ 
where  $X\in M(r-1-p,p), Y\in GL(r-1-p), D\in GL(1), B\in M(r-1-p,1)$, and where  
$$\left[\hskip -3pt\begin{array}{cc}^tg_{1}^{-1}&0 \\
 
 X&Y

 \end{array}\hskip -3pt\right]\in Sp(m).$$
 Then the set of matrices $\left[\hskip -5pt\begin {array}{ccc}I_{p}&0&0\\
 0&I_{r-1-p}&B\\
 0&0&1\end{array}\hskip -5pt\right]$ is a normal unipotent subgroup in $SL(r)$.

  \end{proof}
  
  \begin{rem}\label{exemple1-invariant-nonreg}   If  $r$ is odd and $p$ is even ($p<r-2$) the function $(X,Y)\longmapsto Pf(^tX.Y.X)$ is a non-trivial relative invariant of the $PV$  considered in Lemma \ref{lemme-GLXGL-in-D(1)}, which is non regular for these values of $p$ and $r$. Hence the result from \cite{rub-note-PV} which asserts that an irreducible $PV$   of parabolic type is regular if and only if there exists a non-trivial relative invariant is no longer true if the representation is not irreducible.(See also Remark \ref{exemple2-invariant-nonreg} for another example).
  \end{rem}

 \begin{lemma}\label{lemme-GLXGL-in-D(2)} Let $D_{2}$ be the group $({{\bb C}^*})^2$ identified with the $2\times 2$ diagonal matrices, and denote by $\Delta$ the natural representation of $D_{2}$ on ${\bb C}^2$. Consider the representation
 $$[\Lambda_{p-1}(GL(p))\otimes \Lambda_{1}(SL(q))\otimes Id(D_{2})]\oplus [Id(GL(p))\otimes\Lambda_{q-1}(SL(q))\otimes \Delta]$$
 of the group $GL(p)\times SL(q)\times D_{2}$. Note that this representation is prehomogeneous since it is infinitesimally equivalent to the $PV$ of parabolic type associated to the diagram
 $$ {
\hbox{\unitlength=0.5pt
\begin{picture}(240,35)(0,-15)
\put(10,0){\circle*{10}}
\put(5,-25){$\alpha_1$}
\put(15,0){\line (1,0){30}}
\put(50,0){\circle*{10}}
\put(55,0){\line (1,0){30}}
\put(90,0){\circle*{10}}
\put(100,0){\circle*{1}}
\put(105,0){\circle*{1}}
\put(110,0){\circle*{1}}
\put(115,0){\circle*{1}}
\put(120,0){\circle*{1}}
\put(130,0){\circle*{10}}
\put(118,-25){$\alpha_{p-1}$}
\put(135,0){\line (1,0){30}}
\put(170,0){\circle*{10}}
\put(170,0){\circle{16}}
\put(175,0){\line (1,0){30}}
\put(210,0){\circle*{10}}
\put(205,-25){$\beta_1$}
\put(220,0){\circle*{1}}
\put(225,0){\circle*{1}}
\put(230,0){\circle*{1}}
\put(235,0){\circle*{1}}
\put(240,0){\circle*{1}}
\put(250,0){\circle*{10}}
\put(255,0){\line (1,0){30}}
\put(285,0){\circle*{10}}
\put(295,-3){$\beta_{q-1}$}
\put(290,4){\line (1,1){20}}
\put(312,27){\circle*{10}}
\put(312,27){\circle{16}}
 \put(290,-4){\line (1,-1){20}}
\put(312,-26){\circle*{10}}
\put(312,-26){\circle{16}}
\end{picture}\hskip 70pt\raisebox {8pt}{$D_{p+q+1}$}
}}
$$
\vskip 5pt
$1)$ If $q>p$ and $p=2$ this $PV$ is regular and $1$-irreducible  $($hence $Q$-irreducible from PropositionÊ $\ref{prop-irreductibilite-implications})$.

$2)$ If $q>p$ and $p\neq2$, then this $PV$ is not regular and there exists a non-trivial normal unipotent subgroup of the generic isotropy subgroup which is included in $SL(q)$.
 \end{lemma}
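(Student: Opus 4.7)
The plan is to mirror the proofs of Lemmas~\ref{lemme-GLXGLXG} and~\ref{lemme-GLXGL-in-D(1)}: realize the representation concretely, choose a candidate generic point, compute its isotropy, and then split according to whether the isotropy is reductive. I would work on $V=M(q,p)\oplus M(2,q)$ with the action
$$(g_{1},g_{2},d)(X,Y)=(g_{2}Xg_{1}^{-1},\,dYg_{2}^{-1}),$$
where $d=\mathrm{diag}(d_{1},d_{2})\in D_{2}$ and $Y\in M(2,q)$ is viewed as two row vectors stacked.

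Following the earlier lemmas I would start with $X_{0}=\left[\hskip -5pt \begin{array}{c}I_{p}\\0\end{array}\hskip -5pt\right]\in M(q,p)$; its isotropy consists of triples with $g_{2}=\left[\hskip -5pt \begin{array}{cc}g_{1}&B\\0&D\end{array}\hskip -5pt\right]$, subject to $\det g_{1}\det D=1$. Splitting $Y=(Y^{(1)}\,|\,Y^{(2)})$ with $Y^{(1)}\in M(2,p)$, the $B$-parameter translates $Y^{(2)}$ freely while $g_{1}$ acts on the right on $Y^{(1)}$, with open orbit the rank-$2$ matrices since $p\geq 2$. Hence $Y_{0}=(I_{2}\,|\,0\,|\,0)$ with block widths $(2,p-2,q-p)$ is a natural candidate generic element. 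Solving $dY_{0}g_{2}^{-1}=Y_{0}$ should force the top-left $2\times 2$ block of $g_{1}$ to be the diagonal matrix $d$, the top-right $2\times(p-2)$ block of $g_{1}$ to vanish, and the top strip of $B$ to vanish. The joint isotropy of $(X_{0},Y_{0})$ should thus consist of triples
$$\Bigl(g_{1},\,\left[\hskip -5pt \begin{array}{ccc}d&0&0\\ C&E&B'\\ 0&0&D\end{array}\hskip -5pt\right],\,d\Bigr),\quad g_{1}=\left[\hskip -5pt \begin{array}{cc}d&0\\ C&E\end{array}\hskip -5pt\right],$$
with $d\in D_{2}$, $C\in M(p-2,2)$, $E\in GL(p-2)$, $B'\in M(p-2,q-p)$, $D\in GL(q-p)$ and $\det d\det E\det D=1$.

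For case $(1)$, specializing $p=2$ makes the $(p-2)$-rows disappear and reduces the joint isotropy to $\{(d,\mathrm{diag}(d,D),d):\det d\det D=1\}$, which is reductive; Proposition~\ref{prop.PV-regulier-non-connexe} then yields regularity. A dimension count, $\dim G-(q^{2}-4q+5)=4q=\dim V$, confirms the orbit is open. For $1$-irreducibility I would project the isotropy to $G/[G,G]\simeq({\bb C}^{*})^{3}$ via $(g_{1},g_{2},d)\mapsto(\det g_{1},d_{1},d_{2})$ and observe its image is the $2$-dimensional subtorus $\{(d_{1}d_{2},d_{1},d_{2})\}$, so the subgroup $G_{1}$ generated by isotropy and $[G,G]$ has codimension one in $G$. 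There is therefore exactly one fundamental relative invariant, namely $(X,Y)\mapsto\det(YX)$ (a $2\times 2$ determinant), and the $PV$ is $1$-irreducible.

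For case $(2)$ with $p\geq 3$, the block $B'\in M(p-2,q-p)$ is nontrivial. Setting $d=1$, $C=0$, $E=I_{p-2}$, $D=I_{q-p}$ inside the joint isotropy carves out the abelian subgroup
$$N=\Bigl\{\bigl(I_{p},\,\left[\hskip -5pt \begin{array}{ccc}I_{2}&0&0\\ 0&I_{p-2}&B'\\ 0&0&I_{q-p}\end{array}\hskip -5pt\right],\,1\bigr):B'\in M(p-2,q-p)\Bigr\},$$
which is unipotent, contained in the $SL(q)$-factor, and of positive dimension $(p-2)(q-p)$. The only point requiring verification is normality of $N$ in the full joint isotropy: an explicit block computation using the shape of $g_{2}^{-1}$ obtained above should give the conjugation rule $B'\mapsto EB'D^{-1}$, which preserves $N$. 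This block-matrix bookkeeping, mirroring the analogous step in Lemma~\ref{lemme-GLXGL-in-D(1)}, is the main (though still routine) technical obstacle of the argument.
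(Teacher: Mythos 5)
Your handling of the sub-cases $p=2$ and $p\geq 3$ coincides with the paper's own proof: same generic points $X_{0}=\left[\begin{smallmatrix}I_{p}\\ 0\end{smallmatrix}\right]$, $Y_{0}=(I_{2}\,|\,0)$, the same isotropy computations, the same codimension-one argument (isotropy together with the commutator subgroup) for $1$-irreducibility when $p=2$, and the same normal unipotent subgroup in the $B'$-block when $p\geq 3$. The conjugation rule $B'\mapsto EB'D^{-1}$ that you flag as the remaining verification does hold, so those two sub-cases are complete.

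The genuine gap is the sub-case $p=1$. Statement 2) assumes only $p\neq 2$, so it includes $p=1$, and this value is actually used later (diagram $(7)$ in the proof of Theorem \ref{th-Q-irred}, where the lemma's $p$ equals $p_{1}+1$ and $p_{1}=0$ can occur). Your entire setup presupposes $p\geq 2$: the block widths $(2,p-2,q-p)$ make no sense for $p=1$, and the assertion that the rank-$2$ matrices form the open orbit of $Y^{(1)}\in M(2,p)$ fails since a $2\times 1$ matrix has rank at most $1$. More seriously, the obvious analogue of your candidate point is not generic there: for $p=1$ the isotropy of $\bigl(e_{1},(I_{2}\,|\,0)\bigr)$ has dimension $q^{2}-3q+3$, one more than the value $q^{2}-3q+2$ required for an open orbit, so no isotropy computation at that point can conclude anything. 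The paper instead takes $X_{0}={}^t(1,1,0,\dots,0)$; the two conditions then force the two diagonal entries of $d$ to coincide, and the resulting isotropy contains the normal unipotent subgroup of matrices $\left[\begin{smallmatrix}1&0&0\\ 0&1&0\\ \gamma&-\gamma&I_{q-2}\end{smallmatrix}\right]$ inside $SL(q)$. This separate argument is missing from your proposal and cannot be obtained by specializing your block computation.
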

 
 \begin{proof}\hfill{}
 
  The space of the representation is $M(q,p)\oplus M(2,q)$ and the action of  $GL(p)\times SL(q)\times D_{2}$ is given by
  $$(g_{1},g_{2},g_{3})(X,Y)=(g_{2}Xg_{1}^{-1},g_{3}Yg_{2}^{-1}),$$
  where $g_{1}\in GL(p), g_{2}\in SL(q),g_{3}\in D_{2}, X\in M(q,p), Y\in M(2,q).$
  \vskip 5pt
  $ \bullet$  {\it Suppose  that $q>p$ and $p= 2$}.
  
  Let $X_{0}=\left[\hskip -5pt\begin{array}{c}I_{2}\\
  0
  \end{array}\hskip -5pt\right]\in M(q,2)$ and let $Y_{0}= \left[\hskip -5pt\begin{array}{cc}I_{2}&\hskip -5pt 0   \end{array}\hskip -5pt\right]\in M(2,q)$. A computation shows that the isotropy subgroup of $(X_{0},Y_{0})$ is the set of triplets of the form
  $(d, \left[\hskip -5pt\begin{array}{cc}d&0\\
  0&g
  \end{array}\hskip -5pt\right],d)$, where $d\in D_{2}$ and $g\in GL(q-2)$. From the dimensions of the full group and of the isotropy subgroup, we see that $(X_{0},Y_{0})$ is generic. Moreover as the isotropy subgroup is reductive, the $PV$ is regular. The subgroup $G_{1}$ generated by the commutator subgroup $(\simeq SL(2)\times SL(q))$ and the generic isotropy is the subgroup of triples $(g_{1},g_{2},g_{3})$  with $\det g_{1}=\det g_{3}$.  Hence $G/G_{1}$ is one dimensional, therefore the $PV$ is $1$-irreducible. It is easy to see that the function $(X,Y) \longmapsto \det (YX)$ is the fundamental relative invariant.
  
   \vskip 5pt
  $ \bullet$  {\it Suppose  that $q>p$ and $p> 2$}.

Let $X_{0}=\left[\hskip -5pt\begin{array}{c}I_{p}\\
  0
  \end{array}\hskip -5pt\right]\in M(q,p)$ and let $Y_{0}= \left[\hskip -5pt\begin{array}{cc}I_{2}&\hskip -5pt 0   \end{array}\hskip -5pt\right]\in M(2,q)$. Then again one proves that $(X_{0},Y_{0})$ is generic and one shows that its isotropy subgroup is the set of triples of the form
  $$(\left[\hskip -5pt\begin{array}{cc}d&0\\
  0&D
  \end{array}\hskip -5pt\right], \left[\hskip -5pt\begin{array}{ccc}d&0&0\\
  C&D&B\\
  0&0&D'
  \end{array}\hskip -5pt\right],d),$$
  where $d\in D_{2}, D\in GL(p-2), B\in M(p-2, q-p), D'\in GL(q-p)$.
  The set of matrices of the form $ \left[\hskip -5pt\begin{array}{ccc}I_{2}&0&0\\
  0&I_{p-2}&B\\
  0&0&I_{q-p}
  \end{array}\hskip -5pt\right]$ is a normal unipotent subgroup of $SL(q)$.
  
   \vskip 5pt
  $ \bullet$  {\it Suppose  that $q>p$ and $p=1$}.
  
  Let $X_{0}=\left[\hskip -5pt\begin{array}{c}1\\
  1\\
  0
  \end{array}\hskip -5pt\right]\in M(q,1)$ and let $Y_{0}= \left[\hskip -5pt\begin{array}{cc}I_{2}&\hskip -5pt 0   \end{array}\hskip -5pt\right]\in M(2,q)$.  It is easy to verify that $(X_{0},Y_{0})$ is generic and that its isotropy subgroup is the set of triples of the form
  $(\lambda,  \left[\hskip -5pt\begin{array}{ccc}\lambda&0&0\\
  0&\lambda&0\\
  \gamma&-\gamma&D
  \end{array}\hskip -5pt\right],\left[\hskip -5pt\begin{array}{cc}\lambda&0\\
  0&\lambda 
  \end{array}\hskip -5pt\right])$, where $\lambda\in {\bb C}^*$, $D\in GL(q-2),  \lambda^2\det D =1$, $\gamma\in M(q-2,1)$. The subset of matrices of the form
  $\left[\hskip -5pt\begin{array}{ccc}1&0&0\\
  0&1&0\\
  \gamma&-\gamma&I_{q-2}
  \end{array}\hskip -5pt\right]$ is a normal unipotent subgroup of $SL(q)$.

 \end{proof}
 
 \begin{rem}\label{exemple2-invariant-nonreg} If $\left[\hskip -5pt\begin{array}{c}x_{1}\\
  x_{2}
  \end{array}\hskip -5pt\right]$ is a vector in ${\bb C}^2$, let $f_{1}$ and $f_{2}$ be the two projections defined by $f_{i}(\left[\hskip -5pt\begin{array}{c}x_{1}\\
  x_{2}
  \end{array}\hskip -5pt\right])=x_{i}, i=1,2$. It is quite obvious that if $q>p$, and $p=1$, the mappings $(X,Y)\longmapsto f_{i}(Y.X)$ are relative invariants which are algebraically independant. This gives another example of a parabolic $PV$ having nontrivial relative invariants and which is nonregular (see Remark \ref{exemple1-invariant-nonreg}).
   \end{rem}
   
   \begin{lemma}\label{lemme-arete-gras} Let $({\go l}_{\theta},d_{1}(\theta))$ be a $PV$ of parabolic type in a simple Lie algebra ${\go g}$. Suppose that its diagram is of the following type
  $$ {
\hbox{\unitlength=0.5pt
\begin{picture}(240,35)(0,-15)\put(72,0){\circle*{1}}
\put(77,0){\circle*{1}}
\put(82,0){\circle*{1}}
\put(87,0){\circle*{1}}
\put(92,0){\circle*{1}}
 \put(97,0){\circle*{1}}
\put(102,0){\circle*{1}}
\put(107,0){\circle*{1}}
\put(112,0){\circle*{1}}
\put(117,0){\circle*{1}}
\put(130,0){\circle*{10}}
\put(130,0){\circle{16}}
\put(118,-25){$\alpha_{1}$}
\put(136,-1){\line (1,0){30}}
\put(136,0){\line (1,0){30}}
\put(136,1){\line (1,0){30}}
\put(136,2){\line (1,0){30}}
\put(172,0){\circle*{10}}
\put(172,0){\circle{16}}
\put(160,-25){$\alpha_{2}$}
\put(183,0){\circle*{1}}
\put(188,0){\circle*{1}}
\put(193,0){\circle*{1}}
\put(198,0){\circle*{1}}
\put(203,0){\circle*{1}}
\put(208,0){\circle*{1}}
\put(213,0){\circle*{1}}
\put(218,0){\circle*{1}}
\put(223,0){\circle*{1}}
\put(227,0){\circle*{1}}
 
\end{picture} 
}}
$$
\vskip 5pt
where the boldface line  stands for one or more edges in the Dynkin diagram. In other words, in the notation of section $4.1$,  we suppose that $\Psi\setminus \theta$ contains two roots $\alpha_{1}$ and $\alpha_{2} $ $($but possibly others$)$  with $(\alpha_{1}|\alpha_{2})\neq 0$. Let $\Psi_{1}$ be the connected component of $\Psi\setminus \{\alpha_{2}\}$ containing $\alpha_{1}$ and let $\Psi_{2}$ be the connected component of $\Psi\setminus \{\alpha_{1}\}$ containing $\alpha_{2}$. Set $\theta_{1}=\theta\cap \Psi_{1}$ and $\theta_{2}=\theta\cap \Psi_{2}$. Define
$$D^1(\theta)=\bigoplus_{\alpha\in \Psi_{1}\setminus \theta_{1}}{\go g}^{\overline{\alpha}},\, \,D^2(\theta)=\bigoplus_{\alpha\in \Psi_{2}\setminus \theta_{2}}{\go g}^{\overline{\alpha}}$$
$($For the notations see section $4.1$, $D^1(\theta)$ $($resp.  $D^2(\theta)$$)$ is just the sum of the irreducible components of $d_{1}(\theta)$ arising from the left of the root $\alpha_{1}$ $($resp. from the right of the root $\alpha_{2}$$)$. Then:
$$(L_{\theta},d_{1}(\theta)) \text { is regular }\Longleftrightarrow (L_{\theta},D^1(\theta)) \text { and } (L_{\theta},D^2(\theta)) \text{ are regular. }$$
   \end{lemma}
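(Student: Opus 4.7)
The plan is to exhibit $({\go l}_\theta, d_1(\theta))$ as an external direct product of the two PVs $({\go l}_\theta, D^1(\theta))$ and $({\go l}_\theta, D^2(\theta))$, after which the equivalence claimed in the lemma will follow from the behaviour of generic isotropy under products, combined with Proposition~\ref{prop.PV-regulier-non-connexe}. The first step is purely diagrammatic: since the Dynkin diagram of a simple Lie algebra is a tree and the circled roots $\alpha_{1}, \alpha_{2}$ are joined by the boldface edge, removing either vertex disconnects the diagram in such a way that $\Psi = \Psi_{1} \sqcup \Psi_{2}$ as a disjoint union (a common vertex $v \in \Psi_1 \cap \Psi_2$ would yield a cycle through the edge $\alpha_1\alpha_2$). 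Correspondingly $\theta = \theta_{1} \sqcup \theta_{2}$, $\Psi \setminus \theta = (\Psi_{1} \setminus \theta_{1}) \sqcup (\Psi_{2} \setminus \theta_{2})$ and $d_{1}(\theta) = D^{1}(\theta) \oplus D^{2}(\theta)$, with no root of $\theta_{1}$ connected in the diagram to any root of $\theta_{2}$.

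Next I would decompose ${\go l}_\theta = {\go l}_\theta' \oplus {\go h}_\theta$ further. The absence of edges between $\theta_{1}$ and $\theta_{2}$ gives ${\go l}_\theta' = {\go l}_{\theta_{1}}' \oplus {\go l}_{\theta_{2}}'$ as a semisimple Lie algebra. The restrictions $\{\overline{\alpha} : \alpha \in \Psi \setminus \theta\}$ form a basis of ${\go h}_\theta^{*}$, so setting
\[
{\go h}_{i} = \bigcap_{\beta \in \Psi_{3-i} \setminus \theta_{3-i}} \ker \overline{\beta}
\]
one gets $\dim {\go h}_{i} = |\Psi_{i} \setminus \theta_{i}|$ and ${\go h}_\theta = {\go h}_{1} \oplus {\go h}_{2}$. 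Because $\alpha(H) = 0$ whenever $\alpha \in \langle \theta \rangle$ and $H \in {\go h}_\theta$, the subalgebras $\widetilde{{\go l}}_{i} := {\go l}_{\theta_{i}}' \oplus {\go h}_{i}$ commute, and hence ${\go l}_\theta = \widetilde{{\go l}}_{1} \oplus \widetilde{{\go l}}_{2}$ as Lie algebras.

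I would then check that $\widetilde{{\go l}}_{3-i}$ acts trivially on $D^{i}(\theta)$. For $\alpha \in \Psi_{i} \setminus \theta_{i}$, the set $J_\alpha$ of roots of $\theta$ adjacent to $\alpha$ is contained in $\theta_{i}$: the only possible neighbour of $\alpha$ outside $\Psi_{i}$ would be $\alpha_{3-i}$, which is circled and hence not in $\theta$. By the highest-weight description preceding the rules $(R)$ in section~4.1, it follows that ${\go l}_{\theta_{3-i}}'$ acts trivially on $V_{\alpha} = {\go g}^{\overline{\alpha}}$; and ${\go h}_{3-i}$ annihilates $V_\alpha$ by its very definition. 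Consequently $({\go l}_\theta, d_{1}(\theta))$ is the external direct product of the PVs $(\widetilde{{\go l}}_{1}, D^{1}(\theta))$ and $(\widetilde{{\go l}}_{2}, D^{2}(\theta))$.

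The lemma then follows at once. In such an external product, the generic isotropy subgroup is the product of the generic isotropies of the two factors, and a direct product of algebraic groups is reductive if and only if each factor is. By the equivalence ${\rm i)} \Leftrightarrow {\rm v)}$ of Proposition~\ref{prop.PV-regulier-non-connexe}, regularity of $({\go l}_\theta, d_{1}(\theta))$ is thus equivalent to regularity of both $(\widetilde{{\go l}}_{i}, D^{i}(\theta))$; and since $\widetilde{{\go l}}_{3-i}$ is reductive and acts trivially on $D^{i}(\theta)$, the latter coincides with regularity of $({\go l}_\theta, D^{i}(\theta))$. The hard part will be the careful verification of the product decomposition ${\go l}_\theta = \widetilde{{\go l}}_{1} \oplus \widetilde{{\go l}}_{2}$ with trivial cross-actions---especially the splitting of ${\go h}_\theta$ via the linear independence of the characters $\overline{\alpha}$---after which the conclusion is automatic.
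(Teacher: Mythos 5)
Your argument is correct; it reaches the equivalence by a route organized differently from the paper's. You make the product structure fully explicit: after checking that the tree property of the Dynkin diagram forces $\Psi=\Psi_{1}\sqcup\Psi_{2}$, you split not only the semisimple part ${\go l}_{\theta}'={\go l}_{\theta_{1}}'\oplus{\go l}_{\theta_{2}}'$ but also the torus ${\go h}_{\theta}={\go h}_{1}\oplus{\go h}_{2}$ (using that the $\overline{\alpha}$, $\alpha\in\Psi\setminus\theta$, form a basis of ${\go h}_{\theta}^{*}$), so that $({\go l}_{\theta},d_{1}(\theta))$ becomes an external product of $(\widetilde{{\go l}}_{1},D^{1}(\theta))$ and $(\widetilde{{\go l}}_{2},D^{2}(\theta))$; both implications then drop out of the criterion ``regular $\Leftrightarrow$ generic isotropy reductive'' of Proposition \ref{prop.PV-regulier-non-connexe}, the generic isotropy of the product being the product of the generic isotropies. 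The paper never splits ${\go h}_{\theta}$: it gets $\Leftarrow$ from Proposition \ref{prop-somme-reg}, and proves $\Rightarrow$ by contraposition, transporting a nontrivial normal unipotent subgroup $U\subset (L_{\theta})_{X_{1}}$ into $(L_{\theta})_{X_{1}+X_{2}}$ after observing that ${\go u}$ must lie in ${\go l}_{\theta_{1}}'$ (nilpotency kills its ${\go h}_{\theta}$-component) and therefore annihilates $X_{2}$. Both proofs rest on the same structural fact, namely that ${\go l}_{\theta_{2}}$ acts trivially on $D^{1}(\theta)$ and symmetrically, because $\alpha_{1}$ and $\alpha_{2}$ are adjacent circled roots in a tree. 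Your version is more symmetric and exhibits the mechanism (an external product of $PV$'s) cleanly, at the price of the torus bookkeeping; the paper's version is shorter on that point and rehearses the ``normal unipotent subgroup'' technique that it reuses in Lemmas \ref{lemme-GXGL}, \ref{lemme-GLXGLXG}, \ref{lemme-GLXGL-in-D(1)} and \ref{lemme-GLXGL-in-D(2)}. One cosmetic caveat: the identification of the generic isotropy as a direct product is safest at the Lie-algebra level (criterion {\rm vi)} of Proposition \ref{prop.PV-regulier-non-connexe}), since the connected group $L_{\theta}$ is a priori only isogenous to $\widetilde{L}_{1}\times\widetilde{L}_{2}$; this does not affect your argument.
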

 \begin{proof}\hfill{}

 Suppose first that $(L_{\theta},D^1(\theta))$ and $(L_{\theta},D^2(\theta))$ are regular. Then, as $d_{1}(\theta)= D^1(\theta)\oplus  D^2(\theta)$, we know from Proposition \ref{prop-somme-reg} that $(L_{\theta},d_1(\theta))$ is regular.
 
 Conversely suppose that $(L_{\theta},D^{1}(\theta))$ is not regular (for example). Let $X_{1}+X_{2}$ $(X_{i}\in D^i(\theta))$ a generic element in $d_{1}(\theta).$ Then $X_{1} $ is generic in $(L_{\theta},D^{1}(\theta))$ (see Proposition \ref{prop-components}). From the hypothesis we know that the isotropy subgroup $(L_{\theta})_{X_{1}}$ is not reductive (Proposition \ref{prop.PV-regulier-non-connexe}), hence $(L_{\theta})_{X_{1}}$ contains a nontrivial normal unipotent subgroup $U$. The Lie algebra ${\go u}$ of $U$ is a nonzero ideal in $({\go l}_{\theta})_{X_{1}}$. Let ${\go l}_{1}$ (resp ${\go l}_{2}$)  be the semi-simple subalgebra of ${\go g}$ corresponding to $\theta_{1}$ (resp. $\theta_{2}$). One has ${\go l}_{\theta}={\go h}_{\theta}\oplus {\go l}_{1}\oplus {\go l}_{2}$. From the hypothesis on $\alpha_{1} $ and  $\alpha_{2}$, we have $[{\go l}_{2}, X_{1}]=\{0\}$. Therefore $({\go l}_{\theta})_{X_{1}}=({\go h}_{\theta}\oplus {\go l}_{1})_{X_{1}}\oplus {\go l}_{2}$, and hence ${\go u}\in ({\go h}_{\theta}\oplus {\go l}_{1})_{X_{1}}$. But as ${\go u}$ is the Lie algebra of a unipotent subgroup we have  ${\go u}\in   {\go l}_{1} $. But as $[{\go l}_{1},X_{2}]=\{0\}$, we obtain that $U$ stabilizes also $X_{2}$. As $({\go l}_{\theta})_{X_{1}+X_{2}}=({\go l}_{\theta})_{X_{1}}\cap ({\go l}_{\theta})_{X_{2}}$, we see that ${\go u}$ is an ideal in $({\go l}_{\theta})_{X_{1}+X_{2}}$. Hence $U$ is a normal subgroup of $(L_{\theta})_{X_{1}+X_{2}}$. Therefore $(L_{\theta},d_{1}(\theta))$ is not regular.

 \end{proof}
 
 \begin{theorem}\label{th-Q-irred} The $Q$-irreducible $PV$'s of parabolic type which are not irreducible regular are exactly the $PV$'s from Table 1 at the end of the paper $($where the numbers $p_{i}$ are the number of roots in the connected components of $\theta$$)$.

  \end{theorem}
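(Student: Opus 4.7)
The proof would proceed by a case analysis on the type of the simple Lie algebra $\go g$ and the configuration of the subset $\Psi \setminus \theta$ of circled roots. The first structural observation is that, in a $Q$-irreducible PV of parabolic type which is not irreducible, no two circled roots can be adjacent in the Dynkin diagram. Indeed, if $\alpha_1, \alpha_2 \in \Psi \setminus \theta$ satisfy $(\alpha_1 | \alpha_2) \neq 0$, then Lemma~\ref{lemme-arete-gras} forces both $D^1(\theta)$ and $D^2(\theta)$ to be regular sub-PV's; since each of them is then a proper invariant subspace on which the PV is regular, the full PV fails to be $Q$-irreducible. This narrows the admissible diagrams drastically: between any two circled roots there must lie at least one root of $\theta$.

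Next, for each such admissible diagram, I would identify $L_\theta$ concretely as a product of $GL$-factors (one per connected component of $\theta$) times possibly a classical or exceptional group and a central torus. The irreducible components $V_\alpha$ of $d_1(\theta)$ are tensor products determined by the zero, one or two components of $\theta$ to which $\alpha$ is connected, and can be read off the diagram by the rules $(R)$ stated in Section~4.1. Lemmas~\ref{lemme-GXGL}, \ref{lemme-GLXGLXG}, \ref{lemme-GLXGL-in-D(1)} and \ref{lemme-GLXGL-in-D(2)} are tailored to the exact shapes of sub-PV's that arise here: each gives, for the relevant configuration, either a statement of regularity and $1$-irreducibility, or an explicit normal unipotent subgroup inside the generic isotropy group, which by Proposition~\ref{prop.PV-regulier-non-connexe} rules out regularity.

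The algorithm for each candidate diagram is then: (i) verify that $(L_\theta, d_1(\theta))$ itself is regular, combining the classification of irreducible regular parabolic PV's from \cite{rub-note-PV,rub-kyoto,rub-bouquin-PV} with repeated use of Lemma~\ref{lemme-arete-gras} to reduce to the irreducible building blocks; (ii) check, for every proper subset $\Gamma \subsetneq \Psi \setminus \theta$ of circled roots, that the sub-PV $(L_\theta, \bigoplus_{\alpha \in \Gamma} V_\alpha)$ is \emph{not} regular, by exhibiting a normal unipotent subgroup of its generic isotropy via the appropriate lemma above. A diagram passes both tests precisely when it appears in Table~1. I would carry this out type by type: $A_n$ first (which is very restrictive because of Lemma~\ref{lemme-GLXGLXG}), then $B_n$ and $C_n$, then $D_n$ (where the forked end introduces several subfamilies controlled by Lemmas~\ref{lemme-GLXGL-in-D(1)} and~\ref{lemme-GLXGL-in-D(2)}), and finally the five exceptional algebras, where the analysis is finite.

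The hard part is not conceptual but combinatorial bookkeeping: for each candidate diagram one must enumerate every proper $\Gamma$ and locate its defining configuration inside the list of technical lemmas. For $D_n$ in particular, the trivalent node forces a careful separation of cases according to which of the two spin-type roots lies in $\Gamma$ and to the parity of the sizes $p_i$ of the connected components of $\theta$, and one must verify that the normal unipotent subgroups produced by Lemmas~\ref{lemme-GLXGL-in-D(1)} and~\ref{lemme-GLXGL-in-D(2)} indeed remain normal after passing from the local $GL \times GL$ picture back to the full $L_\theta$-action. For the exceptional types, whenever a sub-PV is itself of exceptional parabolic type the $GL$-based lemmas do not apply directly; one must then fall back on the irreducible classification or on a direct computation of the generic isotropy algebra to confirm (non-)regularity of the sub-PV. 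Assembling the surviving diagrams yields exactly the list of Table~1.
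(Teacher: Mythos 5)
Your proposal follows essentially the same route as the paper: exclude adjacent circled roots via Lemma \ref{lemme-arete-gras}, then run the type-by-type dichotomy of Lemmas \ref{lemme-GXGL}--\ref{lemme-GLXGL-in-D(2)} (each subdiagram is either regular, killing $Q$-irreducibility, or contributes a normal unipotent subgroup that persists in the generic isotropy of the full diagram, killing regularity), and finish the exceptional types by a finite enumeration with direct isotropy computations. The only imprecision is your suggestion of using Lemma \ref{lemme-arete-gras} to verify regularity of the surviving candidates --- it only applies to adjacent circled roots, which you have already excluded; in the paper regularity of the Table~1 diagrams comes directly from the explicit reductive generic isotropy groups computed in the technical lemmas.
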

 
 \begin{proof}\hfill{}
 
 A consequence of Lemma \ref{lemme-arete-gras} is that the diagram of a $Q$-irreducible $PV$ of parabolic type will never contain two circled roots which are connected by one or more edges. Therefore we will never consider such diagrams in this proof.
 \vskip 3pt
 
$\diamondsuit$ {\it  Let us first consider the case of classical simple Lie algebras.}

 As we do not consider irreducible $PV$'s, we assume that $\text{Card}(\Psi\setminus \theta)\geq 2.$
 
 \vskip 5pt $\bullet$ The case $A_{n}$.

 a) Suppose that    $\text{Card}(\Psi\setminus \theta)= 2$. Consider a diagram of the  type:
 \vskip 3pt
 $$\hskip -30pt \hbox to 9cm{\unitlength=0.5pt
  \begin{picture}(550,-20)
 \put(90,10){\circle*{10}}
 \put(95,10){\line (1,0){30}}
\put(130,10){\circle*{10}}
 
\put(140,10){\circle*{1}}
\put(145,10){\circle*{1}}
\put(150,10){\circle*{1}}
\put(155,10){\circle*{1}}
\put(160,10){\circle*{1}}
\put(165,10){\circle*{1}}
\put(170,10){\circle*{1}}
\put(175,10){\circle*{1}}
\put(180,10){\circle*{1}}
 \put(190,10){\circle*{10}}
 \put(155,-10){$p_{1}$}
 
\put(195,10){\line (1,0){30}}
\put(235,10){\circle*{10}}
 
\put(235,10){\circle{18}}
\put(245,10){\line (1,0){30}}
\put(280,10){\circle*{10}}
 
\put(290,10){\circle*{1}}
\put(295,10){\circle*{1}}
\put(300,10){\circle*{1}}
\put(305,10){\circle*{1}}
\put(310,10){\circle*{1}}
\put(315,10){\circle*{1}}
\put(320,10){\circle*{1}}
\put(325,10){\circle*{1}}
\put(330,10){\circle*{1}}
\put(305,-10){$p_{2}$}
\put(340,10){\circle*{10}}
 
\put(345,10){\line (1,0){30}}
\put(385,10){\circle*{10}}
\put(385,10){\circle{18}}
\put(395,10){\line (1,0){30}}
\put(430,10){\circle*{10}}

\put(435,10){\circle*{1}}
\put(440,10){\circle*{1}}
\put(445,10){\circle*{1}}
\put(450,10){\circle*{1}}
\put(455,10){\circle*{1}}
\put(460,10){\circle*{1}}
\put(465,10){\circle*{1}}
\put(470,10){\circle*{1}}
\put(475,10){\circle*{1}}
\put(485,10){\circle*{10}}
\put(490,10){\line (1,0){30}}
\put(525,10){\circle*{10}}
 \put(450,-10){$p_{3}$}
  \end{picture}
}\leqno {(1)}$$
  \vskip 3pt

 which is supposed to be Q-irreducible. If $p_{1}\geq p_{2}$, Lemma \ref{lemme-GXGL} implies that either the subdiagram 
 $$ \hbox to 9cm{\unitlength=0.5pt
  \begin{picture}(550,-20)
 \put(90,10){\circle*{10}}
 \put(95,10){\line (1,0){30}}
\put(130,10){\circle*{10}}
 
\put(140,10){\circle*{1}}
\put(145,10){\circle*{1}}
\put(150,10){\circle*{1}}
\put(155,10){\circle*{1}}
\put(160,10){\circle*{1}}
\put(165,10){\circle*{1}}
\put(170,10){\circle*{1}}
\put(175,10){\circle*{1}}
\put(180,10){\circle*{1}}
 \put(190,10){\circle*{10}}
 \put(155,-10){$p_{1}$}
 
\put(195,10){\line (1,0){30}}
\put(235,10){\circle*{10}}
 
\put(235,10){\circle{18}}
\put(245,10){\line (1,0){30}}
\put(280,10){\circle*{10}}
 
\put(290,10){\circle*{1}}
\put(295,10){\circle*{1}}
\put(300,10){\circle*{1}}
\put(305,10){\circle*{1}}
\put(310,10){\circle*{1}}
\put(315,10){\circle*{1}}
\put(320,10){\circle*{1}}
\put(325,10){\circle*{1}}
\put(330,10){\circle*{1}}
\put(305,-10){$p_{2}$}
\put(340,10){\circle*{10}}
 
   \end{picture}
  }
$$
is either regular or  the generic isotropy subgroup contains a nontrivial unipotent subgroup which is included in $SL(p_{1}+1)$. Therefore, in the second case,  this unipotent subgroup  will be included in the generic isotropy of the diagram $(1)$. Hence, in the second case the diagram $(1)$,  will not be regular.
Therefore we have necessarily $p_{1}< p_{2}$. The same arguments show that we have also $p_{3}<p_{2}.$ But then, from  Lemma \ref{lemme-GLXGLXG} we obtain that this $PV$ is regular if and only if $p_{3}=p_{2}$, and in this case it is $1$-irreducible.
\vskip 5pt 
b) Suppose that    $\text{Card}(\Psi\setminus \theta)> 2$. Suppose that the following diagram is $Q$-irreducible:
 \vskip 3pt
 $$ \hskip -120pt \hbox to 9cm{\unitlength=0.5pt
  \begin{picture}(550,-20)
 \put(90,10){\circle*{10}}
 \put(95,10){\line (1,0){30}}
\put(130,10){\circle*{10}}
 
\put(140,10){\circle*{1}}
\put(145,10){\circle*{1}}
\put(150,10){\circle*{1}}
\put(155,10){\circle*{1}}
\put(160,10){\circle*{1}}
\put(165,10){\circle*{1}}
\put(170,10){\circle*{1}}
\put(175,10){\circle*{1}}
\put(180,10){\circle*{1}}
 \put(190,10){\circle*{10}}
 \put(155,-10){$p_{1}$}
 
\put(195,10){\line (1,0){30}}
\put(235,10){\circle*{10}}
 
\put(235,10){\circle{18}}
\put(245,10){\line (1,0){30}}
\put(280,10){\circle*{10}}
 
\put(290,10){\circle*{1}}
\put(295,10){\circle*{1}}
\put(300,10){\circle*{1}}
\put(305,10){\circle*{1}}
\put(310,10){\circle*{1}}
\put(315,10){\circle*{1}}
\put(320,10){\circle*{1}}
\put(325,10){\circle*{1}}
\put(330,10){\circle*{1}}
\put(305,-10){$p_{2}$}
\put(340,10){\circle*{10}}
 
\put(345,10){\line (1,0){30}}
\put(385,10){\circle*{10}}
\put(385,10){\circle{18}}
\put(395,10){\line (1,0){30}}
\put(430,10){\circle*{10}}

\put(435,10){\circle*{1}}
\put(440,10){\circle*{1}}
\put(445,10){\circle*{1}}
\put(450,10){\circle*{1}}
\put(455,10){\circle*{1}}
\put(460,10){\circle*{1}}
\put(465,10){\circle*{1}}
\put(470,10){\circle*{1}}
\put(475,10){\circle*{1}}
\put(480,10){\circle*{1}}
\put(485,10){\circle*{1}}
\put(490,10){\circle*{1}}
\put(495,10){\circle*{1}}
\put(500,10){\circle*{1}}
\put(505,10){\circle*{1}}
\put(510,10){\circle*{1}}

\put(525,10){\circle*{10}}
\put(530,10){\line (1,0){30}}
\put(565,10){\circle*{10}}
\put(565,10){\circle{18}}
\put(575,10){\line (1,0){30}}
\put(610,10){\circle*{10}}
\put(620,10){\circle*{1}}
\put(625,10){\circle*{1}}
\put(630,10){\circle*{1}}
\put(635,10){\circle*{1}}
\put(640,10){\circle*{1}}
\put(645,10){\circle*{1}}
\put(650,10){\circle*{1}}
\put(655,10){\circle*{1}}
\put(660,10){\circle*{1}}
\put(665,10){\circle*{10}}
\put(670,10){\line (1,0){30}}
\put(705,10){\circle*{10}}
\put(620,-15){$p_{n}\,(n\geq 4)$}

\end{picture}
}\leqno {(2)}$$
  \vskip 3pt
  As before  Lemma \ref{lemme-GXGL} implies that $p_{1}<p_{2}$ and $p_{n}< p_{n-1}$. By induction the same argument shows that there exists $i\in \{2,\dots,n-1\}$ such that $p_{i-1}<p_{i}$ and $p_{i+1}<p_{i}$. If $p_{i-1}\neq p_{i+1}$  Lemma \ref{lemme-GLXGLXG} implies that there exists a normal unipotent subgroup in the generic isotropy subgroup of the subdiagram
   \vskip 3pt
 $$\hskip -30pt \hbox to 9cm{\unitlength=0.5pt
  \begin{picture}(550,-20)
 \put(90,10){\circle*{10}}
 \put(95,10){\line (1,0){30}}
\put(130,10){\circle*{10}}
 
\put(140,10){\circle*{1}}
\put(145,10){\circle*{1}}
\put(150,10){\circle*{1}}
\put(155,10){\circle*{1}}
\put(160,10){\circle*{1}}
\put(165,10){\circle*{1}}
\put(170,10){\circle*{1}}
\put(175,10){\circle*{1}}
\put(180,10){\circle*{1}}
 \put(190,10){\circle*{10}}
 \put(155,-10){$p_{i-1}$}
 
\put(195,10){\line (1,0){30}}
\put(235,10){\circle*{10}}
 
\put(235,10){\circle{18}}
\put(245,10){\line (1,0){30}}
\put(280,10){\circle*{10}}
 
\put(290,10){\circle*{1}}
\put(295,10){\circle*{1}}
\put(300,10){\circle*{1}}
\put(305,10){\circle*{1}}
\put(310,10){\circle*{1}}
\put(315,10){\circle*{1}}
\put(320,10){\circle*{1}}
\put(325,10){\circle*{1}}
\put(330,10){\circle*{1}}
\put(305,-10){$p_{i}$}
\put(340,10){\circle*{10}}
 
\put(345,10){\line (1,0){30}}
\put(385,10){\circle*{10}}
\put(385,10){\circle{18}}
\put(395,10){\line (1,0){30}}
\put(430,10){\circle*{10}}

\put(435,10){\circle*{1}}
\put(440,10){\circle*{1}}
\put(445,10){\circle*{1}}
\put(450,10){\circle*{1}}
\put(455,10){\circle*{1}}
\put(460,10){\circle*{1}}
\put(465,10){\circle*{1}}
\put(470,10){\circle*{1}}
\put(475,10){\circle*{1}}
\put(485,10){\circle*{10}}
\put(490,10){\line (1,0){30}}
\put(525,10){\circle*{10}}
 \put(450,-10){$p_{i+1}$}
  \end{picture}
}$$
  \vskip 7pt
which is included in $SL(p_{i})$. But this subgroup will still be included in the generic isotropy of the diagram $(2)$, and hence the diagram $(2)$ would not be regular.

If $p_{i-1}=p_{i+1}$ Lemma \ref{lemme-GLXGLXG} implies that the subdiagram above is regular, hence diagram $(2)$ is never $Q$-irreducible.

 \vskip 5pt
  $\bullet$ The case $B_{n}$.
  
   a) Suppose that    $\text{Card}(\Psi\setminus \theta)= 2$. Suppose that the diagram

  $$ \hbox to 9,5cm {\unitlength=0.5pt
  \begin{picture}(550,-20)
 \put(90,10){\circle*{10}}
 \put(95,10){\line (1,0){30}}
\put(130,10){\circle*{10}}
 
\put(140,10){\circle*{1}}
\put(145,10){\circle*{1}}
\put(150,10){\circle*{1}}
\put(155,10){\circle*{1}}
\put(160,10){\circle*{1}}
\put(165,10){\circle*{1}}
\put(170,10){\circle*{1}}
\put(175,10){\circle*{1}}
\put(180,10){\circle*{1}}
 \put(190,10){\circle*{10}}
 \put(155,-10){$p_{1}$}
 
\put(195,10){\line (1,0){30}}
\put(235,10){\circle*{10}}
 
\put(235,10){\circle{18}}
\put(245,10){\line (1,0){30}}
\put(280,10){\circle*{10}}
 
\put(290,10){\circle*{1}}
\put(295,10){\circle*{1}}
\put(300,10){\circle*{1}}
\put(305,10){\circle*{1}}
\put(310,10){\circle*{1}}
\put(315,10){\circle*{1}}
\put(320,10){\circle*{1}}
\put(325,10){\circle*{1}}
\put(330,10){\circle*{1}}
\put(305,-10){$p_{2}$}
\put(340,10){\circle*{10}}
 
\put(345,10){\line (1,0){30}}
\put(385,10){\circle*{10}}
\put(385,10){\circle{18}}
\put(395,10){\line (1,0){30}}
\put(430,10){\circle*{10}}

\put(435,10){\circle*{1}}
\put(440,10){\circle*{1}}
\put(445,10){\circle*{1}}
\put(450,10){\circle*{1}}
\put(455,10){\circle*{1}}
\put(460,10){\circle*{1}}
\put(465,10){\circle*{1}}
\put(470,10){\circle*{1}}
\put(475,10){\circle*{1}}
\put(485,10){\circle*{10}}
\put(484,12){\line (1,0){41}}
\put(484,8){\line (1,0){41}}
\put(500,5.5){$>$}
\put(530,10){\circle*{10}}
 
 \put(450,-10){$p_{3}$}
 
 \end{picture}
}\leqno (3)$$
\vskip 3pt
is $Q$-irreducible. As in the case $A_{n} \, a)$ before, Lemma \ref{lemme-GXGL} implies that $p_{1}<p_{2}$ and $2p_{3}+1< p_{2}+1$. Then Lemma \ref{lemme-GLXGLXG} implies that the diagram $(3)$ is $Q$-irreducible if and only if $2p_{3}+1=p_{1}+1$, which is the  condition in Table $1$.

   b) Suppose that    $\text{Card}(\Psi\setminus \theta)> 2$.  Suppose that the following diagram is $Q$-irreducible:
 \vskip 3pt
 $$ \hskip -120pt \hbox to 9cm{\unitlength=0.5pt
  \begin{picture}(550,-20)
 \put(90,10){\circle*{10}}
 \put(95,10){\line (1,0){30}}
\put(130,10){\circle*{10}}
 
\put(140,10){\circle*{1}}
\put(145,10){\circle*{1}}
\put(150,10){\circle*{1}}
\put(155,10){\circle*{1}}
\put(160,10){\circle*{1}}
\put(165,10){\circle*{1}}
\put(170,10){\circle*{1}}
\put(175,10){\circle*{1}}
\put(180,10){\circle*{1}}
 \put(190,10){\circle*{10}}
 \put(155,-10){$p_{1}$}
 
\put(195,10){\line (1,0){30}}
\put(235,10){\circle*{10}}
 
\put(235,10){\circle{18}}
\put(245,10){\line (1,0){30}}
\put(280,10){\circle*{10}}
 
\put(290,10){\circle*{1}}
\put(295,10){\circle*{1}}
\put(300,10){\circle*{1}}
\put(305,10){\circle*{1}}
\put(310,10){\circle*{1}}
\put(315,10){\circle*{1}}
\put(320,10){\circle*{1}}
\put(325,10){\circle*{1}}
\put(330,10){\circle*{1}}
\put(305,-10){$p_{2}$}
\put(340,10){\circle*{10}}
 
\put(345,10){\line (1,0){30}}
\put(385,10){\circle*{10}}
\put(385,10){\circle{18}}
\put(395,10){\line (1,0){30}}
\put(430,10){\circle*{10}}

\put(435,10){\circle*{1}}
\put(440,10){\circle*{1}}
\put(445,10){\circle*{1}}
\put(450,10){\circle*{1}}
\put(455,10){\circle*{1}}
\put(460,10){\circle*{1}}
\put(465,10){\circle*{1}}
\put(470,10){\circle*{1}}
\put(475,10){\circle*{1}}
\put(480,10){\circle*{1}}
\put(485,10){\circle*{1}}
\put(490,10){\circle*{1}}
\put(495,10){\circle*{1}}
\put(500,10){\circle*{1}}
\put(505,10){\circle*{1}}
\put(510,10){\circle*{1}}

\put(525,10){\circle*{10}}
\put(530,10){\line (1,0){30}}
\put(565,10){\circle*{10}}
\put(565,10){\circle{18}}
\put(575,10){\line (1,0){30}}
\put(610,10){\circle*{10}}
\put(620,10){\circle*{1}}
\put(625,10){\circle*{1}}
\put(630,10){\circle*{1}}
\put(635,10){\circle*{1}}
\put(640,10){\circle*{1}}
\put(645,10){\circle*{1}}
\put(650,10){\circle*{1}}
\put(655,10){\circle*{1}}
\put(660,10){\circle*{1}}
\put(665,10){\circle*{10}}
\put(664,12){\line (1,0){41}}
\put(664,8){\line (1,0){41}}
\put(675,5.5){$>$}

\put(705,10){\circle*{10}}
\put(620,-15){$p_{n}\,(n\geq 4)$}

\end{picture}
}\leqno {(4)}$$
  \vskip 3pt
  Then as before Lemma \ref{lemme-GXGL} implies that $p_{1}<p_{2}$ and $2p_{n}<p_{n-1}$. There are then two possibilities:
  
  \hskip 15pt - either there exists $i\in \{2,\dots,n-2\}$ such that $p_{i-1}<p_{i}$ and $p_{i+1}<p_{i},$
  
   \hskip 15pt - or $p_{n-2}<p_{n-1}$ and $2p_{n}< p_{n-1}$.
   
   In both cases Lemma \ref{lemme-GLXGLXG} implies that either   diagramm $(4)$ contains a regular subdiagram or it is not regular. We have showed that   diagram $(4)$ is never Q-irreducible.

  \vskip 5pt
  $\bullet$ The cases $C_{n}$ and $D_{n}^{1}$.

  These cases   can be treated in the same way as the cases $A_{n}$ and  $B_{n} $. It must be noticed that in the $C_{n}$ case one cannot have a diagram where the root $\alpha_{n}$ is circled. This is because the subdiagram 
$$  
 \hbox{\unitlength=0.5pt
\begin{picture}(300,30)(-82,0)
\put(10,10){\circle*{10}}

\put(15,10){\line (1,0){30}}
\put(50,10){\circle*{10}}
\put(55,10){\line (1,0){30}}
\put(90,10){\circle*{10}}
\put(95,10){\circle*{1}}
\put(100,10){\circle*{1}}
\put(105,10){\circle*{1}}
\put(110,10){\circle*{1}}
\put(115,10){\circle*{1}}
\put(120,10){\circle*{1}}
\put(125,10){\circle*{1}}
\put(130,10){\circle*{1}}
\put(135,10){\circle*{1}}
\put(140,10){\circle*{10}}
\put(145,10){\line (1,0){30}}
\put(180,10){\circle*{10}}
\put(184,12){\line (1,0){41}}
\put(184,8){\line(1,0){41}}
\put(190,5.5){$<$}
\put(220,10){\circle*{10}}
\put(220,10){\circle{18}}

\end{picture}
}
$$
would be regular (see  the list of the irreductible regular $PV$'s of parabolic type in \cite{rub-kyoto} or in \cite{rub-bouquin-PV}).
  
  \vskip 5pt
  $\bullet$ The case   $D_{n}^{2}$.   
  
      \vskip 3pt
   a) Suppose that    $\text{Card}(\Psi\setminus \theta)= 2$. Suppose that the diagram

$$ \hskip -40pt\hbox to 9cm {\unitlength=0.5pt
  \begin{picture}(550,-20)
 \put(90,10){\circle*{10}}
 \put(95,10){\line (1,0){30}}
\put(130,10){\circle*{10}}
 
\put(140,10){\circle*{1}}
\put(145,10){\circle*{1}}
\put(150,10){\circle*{1}}
\put(155,10){\circle*{1}}
\put(160,10){\circle*{1}}
\put(165,10){\circle*{1}}
\put(170,10){\circle*{1}}
\put(175,10){\circle*{1}}
\put(180,10){\circle*{1}}
 \put(190,10){\circle*{10}}
 \put(155,-10){$p_{1}$}
 
\put(195,10){\line (1,0){30}}
\put(235,10){\circle*{10}}
 
\put(235,10){\circle{18}}
\put(245,10){\line (1,0){30}}
\put(280,10){\circle*{10}}
 
\put(290,10){\circle*{1}}
\put(295,10){\circle*{1}}
\put(300,10){\circle*{1}}
\put(305,10){\circle*{1}}
\put(310,10){\circle*{1}}
\put(315,10){\circle*{1}}
\put(320,10){\circle*{1}}
\put(325,10){\circle*{1}}
\put(330,10){\circle*{1}}
\put(380,-10){$p_{2}$}
\put(340,10){\circle*{10}}
 
\put(345,10){\line (1,0){34}}
\put(385,10){\circle*{10}}
 
\put(390,10){\line (1,0){34}}
\put(430,10){\circle*{10}}

\put(435,10){\circle*{1}}
\put(440,10){\circle*{1}}
\put(445,10){\circle*{1}}
\put(450,10){\circle*{1}}
\put(455,10){\circle*{1}}
\put(460,10){\circle*{1}}
\put(465,10){\circle*{1}}
\put(470,10){\circle*{1}}
\put(475,10){\circle*{1}}
\put(485,10){\circle*{10}}
\put(490,14){\line (1,1){20}}
\put(515,36){\circle*{10}}
\put(515,36){\circle{18}}
\put(490,6){\line (1,-1){20}}
\put(515,-16){\circle*{10}}
\end{picture}
}\leqno (5)$$
is $Q$-irreducible. Then $p_{2}$ is even because if $p_{2}$ is   odd the subdiagram 
\vskip 3pt

$$\hskip -80 pt \hbox{\unitlength=0.5pt
\begin{picture}(300,40)(-82,-10)
 \put(90,10){\circle*{10}}
\put(95,10){\line (1,0){30}}
\put(130,10){\circle*{10}}
\put(140,10){\circle*{1}}
\put(145,10){\circle*{1}}
\put(150,10){\circle*{1}}
\put(155,10){\circle*{1}}
\put(160,10){\circle*{1}}
\put(170,10){\circle*{10}}
\put(175,10){\line (1,0){30}}
\put(210,10){\circle*{10}}
\put(215,14){\line (1,1){20}}
\put(240,36){\circle*{10}}
\put(240,36){\circle{18}}
\put(215,6){\line (1,-1){20}}
\put(240,-16){\circle*{10}}

\put(270,10){$D_{p_{2}+1}$}

 \end{picture}
}
$$
would be regular  (see  the list of the irreductible regular $PV$'s of parabolic type in \cite{rub-kyoto} or in \cite{rub-bouquin-PV}). 

On the other hand from Lemma \ref{lemme-GXGL} we get that $p_{2}>p_{1}$. Then Lemma \ref{lemme-GLXGL-in-D(1)} implies that only the case where $p_{1}=p_{2}-1$ corresponds to a $Q$-irreducible $PV$.

b) Suppose that    $\text{Card}(\Psi\setminus \theta)> 2$.  Suppose that the following diagram is $Q$-irreducible:
 \vskip 3pt
 $$ \hskip -120pt \hbox to 9cm{\unitlength=0.5pt
  \begin{picture}(550,-20)
 \put(90,10){\circle*{10}}
 \put(95,10){\line (1,0){30}}
\put(130,10){\circle*{10}}
 
\put(140,10){\circle*{1}}
\put(145,10){\circle*{1}}
\put(150,10){\circle*{1}}
\put(155,10){\circle*{1}}
\put(160,10){\circle*{1}}
\put(165,10){\circle*{1}}
\put(170,10){\circle*{1}}
\put(175,10){\circle*{1}}
\put(180,10){\circle*{1}}
 \put(190,10){\circle*{10}}
 \put(155,-10){$p_{1}$}
 
\put(195,10){\line (1,0){30}}
\put(235,10){\circle*{10}}
 
\put(235,10){\circle{18}}
\put(245,10){\line (1,0){30}}
\put(280,10){\circle*{10}}
 
\put(290,10){\circle*{1}}
\put(295,10){\circle*{1}}
\put(300,10){\circle*{1}}
\put(305,10){\circle*{1}}
\put(310,10){\circle*{1}}
\put(315,10){\circle*{1}}
\put(320,10){\circle*{1}}
\put(325,10){\circle*{1}}
\put(330,10){\circle*{1}}
\put(305,-10){$p_{2}$}
\put(340,10){\circle*{10}}
 
\put(345,10){\line (1,0){30}}
\put(385,10){\circle*{10}}
\put(385,10){\circle{18}}
\put(395,10){\line (1,0){30}}
\put(430,10){\circle*{10}}

\put(435,10){\circle*{1}}
\put(440,10){\circle*{1}}
\put(445,10){\circle*{1}}
\put(450,10){\circle*{1}}
\put(455,10){\circle*{1}}
\put(460,10){\circle*{1}}
\put(465,10){\circle*{1}}
\put(470,10){\circle*{1}}
\put(475,10){\circle*{1}}
\put(480,10){\circle*{1}}
\put(485,10){\circle*{1}}
\put(490,10){\circle*{1}}
\put(495,10){\circle*{1}}
\put(500,10){\circle*{1}}
\put(505,10){\circle*{1}}
\put(510,10){\circle*{1}}

\put(525,10){\circle*{10}}
\put(530,10){\line (1,0){30}}
\put(565,10){\circle*{10}}
\put(565,10){\circle{18}}
\put(575,10){\line (1,0){30}}
\put(610,10){\circle*{10}}
\put(620,10){\circle*{1}}
\put(625,10){\circle*{1}}
\put(630,10){\circle*{1}}
\put(635,10){\circle*{1}}
\put(640,10){\circle*{1}}
\put(645,10){\circle*{1}}
\put(650,10){\circle*{1}}
\put(655,10){\circle*{1}}
\put(660,10){\circle*{1}}
\put(665,10){\circle*{10}}
 \put(670,14){\line (1,1){20}}
\put(695,36){\circle*{10}}
\put(695,36){\circle{18}}
\put(670,6){\line (1,-1){20}}
\put(695,-16){\circle*{10}}

\put(580,-15){$p_{n}(n\geq 3)$}

\end{picture}
}\leqno {(6)}$$
  \vskip 7pt
  For the same reason as for the diagram $(5)$, we necessarily have $p_{n}$ even.
  Then from Lemma \ref{lemme-GXGL} we get $p_{1}<p_{2} $ and from Lemma \ref{lemme-GLXGL-in-D(1)} we get $p_{n}\leq p_{n-1}$. If $p_{n}=p_{n-1}$  diagram $(6)$ would contain the regular subdiagram 
   \vskip 3pt
 $$ \hskip -10pt \hbox to 9cm{\unitlength=0.5pt
  \begin{picture}(550,-20)
 \put(90,10){\circle*{10}}
 \put(95,10){\line (1,0){30}}
\put(130,10){\circle*{10}}
 
\put(140,10){\circle*{1}}
\put(145,10){\circle*{1}}
\put(150,10){\circle*{1}}
\put(155,10){\circle*{1}}
\put(160,10){\circle*{1}}
\put(165,10){\circle*{1}}
\put(170,10){\circle*{1}}
\put(175,10){\circle*{1}}
\put(180,10){\circle*{1}}
 \put(190,10){\circle*{10}}
 \put(155,-10){$p_{n-1}$}
 
\put(195,10){\line (1,0){30}}
\put(235,10){\circle*{10}}
 
\put(235,10){\circle{18}}
\put(245,10){\line (1,0){30}}
\put(280,10){\circle*{10}}
 
\put(290,10){\circle*{1}}
\put(295,10){\circle*{1}}
\put(300,10){\circle*{1}}
\put(305,10){\circle*{1}}
\put(310,10){\circle*{1}}
\put(315,10){\circle*{1}}
\put(320,10){\circle*{1}}
\put(325,10){\circle*{1}}
\put(330,10){\circle*{1}}
\put(290,-10){$p_{n-1}$}
\put(340,10){\circle*{10}}
\put(345,10){\line (1,0){30}}
\put(380,10){\circle*{10}}

\end{picture}
}$$
\vskip 5pt
Hence $p_{1}<p_{2}$ and $p_{n}<p_{n-1}$. There exists then $i\in \{2,\dots,n\}$ such that $p_{i-1}<p_{i}$ and $p_{i+1}<p_{i}$. From Lemma \ref{lemme-GLXGLXG} we obtain that either the diagram $(6)$ is not $Q$-irreducible (if $p_{i-1}=p_{i+1}$), or non regular (if $p_{i-1}\neq p_{i+1}$). In any case  diagram $(6)$ is never $Q$-irreducible.

\vskip 5pt
  $\bullet$ The case   $D_{n}^{3}$.   
  
  a) Suppose that    $\text{Card}(\Psi\setminus \theta)= 2$. It is easy to prove that the subdiagram 
\vskip 3pt

$$\hskip -80 pt \hbox{\unitlength=0.5pt
\begin{picture}(300,40)(-82,-10)
 \put(90,10){\circle*{10}}
\put(95,10){\line (1,0){30}}
\put(130,10){\circle*{10}}
\put(140,10){\circle*{1}}
\put(145,10){\circle*{1}}
\put(150,10){\circle*{1}}
\put(155,10){\circle*{1}}
\put(160,10){\circle*{1}}
\put(170,10){\circle*{10}}
\put(175,10){\line (1,0){30}}
\put(210,10){\circle*{10}}
\put(215,14){\line (1,1){20}}
\put(240,36){\circle*{10}}
\put(240,36){\circle{18}}
\put(215,6){\line (1,-1){20}}
\put(240,-16){\circle*{10}}
\put(240,-16){\circle{18}}

\put(270,10){$D_{n}$}

 \end{picture}
}
$$
 is regular  if and only if $n=3$ , and then $D_{3}=A_{3}$ and the corresponding diagram was already considered in the $A_{n}$ case.
 \vskip 3pt 
 
   b) Suppose that    $\text{Card}(\Psi\setminus \theta)= 3$. Suppose that the following diagram is $Q$-irreducible.
    \vskip 3pt
 $$ \hskip -320pt \hbox to 9cm{\unitlength=0.5pt
  \begin{picture}(550,-20)
   
 \put(430,10){\circle*{10}}
 \put(435,10){\line (1,0){30}}
\put(470,10){\circle*{10}}

 \put(480,10){\circle*{1}}
\put(485,10){\circle*{1}}
\put(485,-15){$p_{1}$}
\put(490,10){\circle*{1}}
\put(495,10){\circle*{1}}
\put(500,10){\circle*{1}}
\put(505,10){\circle*{1}}
\put(510,10){\circle*{1}}
\put(515,10){\circle*{1}}

\put(525,10){\circle*{10}}
\put(530,10){\line (1,0){30}}
\put(565,10){\circle*{10}}
\put(565,10){\circle{18}}
\put(575,10){\line (1,0){30}}
\put(610,10){\circle*{10}}
\put(620,10){\circle*{1}}
\put(625,10){\circle*{1}}
 \put(630,10){\circle*{1}}
\put(635,10){\circle*{1}}
\put(640,10){\circle*{1}}
\put(645,10){\circle*{1}}
\put(650,10){\circle*{1}}
\put(655,10){\circle*{1}}
\put(665,10){\circle*{10}}
\put(670,10){\line (1,0){30}}
\put(705,10){\circle*{10}}

\put(665,10){\circle*{10}}
 \put(710,14){\line (1,1){20}}
\put(735,36){\circle*{10}}
\put(735,36){\circle{18}}
\put(710,6){\line (1,-1){20}}
\put(735,-16){\circle*{10}}
\put(735,-16){\circle{18}}

\put(630,-15){$p_{2}$}

\end{picture}
}\leqno {(7)}$$
\vskip 3pt
We know from Lemma \ref{lemme-GLXGL-in-D(2)} that if $p_{2}>p_{1}$ and $p_{1}\neq 1$, diagram $(7)$ is not regular. If $p_{2}>p_{1}$ and $p_{1}=1$, the same Lemma implies that diagram $(7)$ is $Q$-irreducible.

If $p_{1}=p_{2}$, diagram $(7)$ contains obviously an $A_{n-2}$ regular irreducible subdiagram.

If $p_{1}>p_{2}$, diagram $(7)$ cannot be regular, as shown by Lemma \ref{lemme-GXGL}.

 \vskip 3pt 
 
   c) Suppose that    $\text{Card}(\Psi\setminus \theta)> 3$. The corresponding diagram is the following:
    \vskip 3pt
 $$ \hskip -120pt \hbox to 9cm{\unitlength=0.5pt
  \begin{picture}(550,-20)
 \put(90,10){\circle*{10}}
 \put(95,10){\line (1,0){30}}
\put(130,10){\circle*{10}}
 
\put(140,10){\circle*{1}}
\put(145,10){\circle*{1}}
\put(150,10){\circle*{1}}
\put(155,10){\circle*{1}}
\put(160,10){\circle*{1}}
\put(165,10){\circle*{1}}
\put(170,10){\circle*{1}}
\put(175,10){\circle*{1}}
\put(180,10){\circle*{1}}
 \put(190,10){\circle*{10}}
 \put(155,-10){$p_{1}$}
 
\put(195,10){\line (1,0){30}}
\put(235,10){\circle*{10}}
 
\put(235,10){\circle{18}}
\put(245,10){\line (1,0){30}}
\put(280,10){\circle*{10}}
 
\put(290,10){\circle*{1}}
\put(295,10){\circle*{1}}
\put(300,10){\circle*{1}}
\put(305,10){\circle*{1}}
\put(310,10){\circle*{1}}
\put(315,10){\circle*{1}}
\put(320,10){\circle*{1}}
\put(325,10){\circle*{1}}
\put(330,10){\circle*{1}}
\put(305,-10){$p_{2}$}
\put(340,10){\circle*{10}}
 
\put(345,10){\line (1,0){30}}
\put(385,10){\circle*{10}}
\put(385,10){\circle{18}}
\put(395,10){\line (1,0){30}}
\put(430,10){\circle*{10}}

\put(435,10){\circle*{1}}
\put(440,10){\circle*{1}}
\put(445,10){\circle*{1}}
\put(450,10){\circle*{1}}
\put(455,10){\circle*{1}}
\put(460,10){\circle*{1}}
\put(465,10){\circle*{1}}
\put(470,10){\circle*{1}}
\put(475,10){\circle*{1}}
\put(480,10){\circle*{1}}
\put(485,10){\circle*{1}}
\put(490,10){\circle*{1}}
\put(495,10){\circle*{1}}
\put(500,10){\circle*{1}}
\put(505,10){\circle*{1}}
\put(510,10){\circle*{1}}

\put(525,10){\circle*{10}}
\put(530,10){\line (1,0){30}}
\put(565,10){\circle*{10}}
\put(565,10){\circle{18}}
\put(575,10){\line (1,0){30}}
\put(610,10){\circle*{10}}
\put(620,10){\circle*{1}}
\put(625,10){\circle*{1}}
\put(630,10){\circle*{1}}
\put(635,10){\circle*{1}}
\put(640,10){\circle*{1}}
\put(645,10){\circle*{1}}
\put(650,10){\circle*{1}}
\put(655,10){\circle*{1}}
\put(660,10){\circle*{1}}
\put(665,10){\circle*{10}}
 \put(670,14){\line (1,1){20}}
\put(695,36){\circle*{10}}
\put(695,36){\circle{18}}
\put(670,6){\line (1,-1){20}}
\put(695,-16){\circle*{10}}
\put(695,-16){\circle{18}}

\put(580,-15){$p_{n}(n\geq 3)$}

\end{picture}
}\leqno {(8)}$$
  \vskip 7pt
  From  Lemma \ref{lemme-GXGL} and Lemma \ref{lemme-GLXGL-in-D(2)} we deduce that if this diagram would be $Q$-irreducible, we would have $p_{1}<p_{2}$ and $p_{n}<p_{n-1}$. Then, using the same method as in the $A_{n}$ case, one proves that diagram $(8)$ is never $Q$-irreducible.
  \vskip 3pt 
  $\diamondsuit$ {\it  Let us now  consider the case of exceptional  simple Lie algebras.}
  
  We only give the proof for $E_{6}$. The cases of $E_{7}$, $E_{8}$, $F_{4}$ and $G_{2}$ are analogous.
  
  We begin by writing  down all  possible diagrams in which  at least two roots are circled. The only (important) constraint comes from Lemma \ref{lemme-arete-gras} which excludes diagrams having two circled roots which are connected. If a diagram contains a regular subdiagram, we will write the subdiagram on the same line. Taking into account the symmetry of the Dynkin diagram of $E_{6}$, the list is as follows:
 
  $\begin{array}{clcl}
  1)& \hbox{\unitlength=0.5pt
\begin{picture}(300,60)(-82,-5)
\put(10,0){\circle*{10}}
\put(10,0){\circle{18}}
\put(15,0){\line  (1,0){30}}
\put(50,0){\circle*{10}}
\put(55,0){\line  (1,0){30}}
\put(90,0){\circle*{10}}
\put(90,-5){\line  (0,-1){20}}
\put(90,-30){\circle*{10}}
\put(90,-30){\circle{18}}
\put(95,0){\line  (1,0){30}}
\put(130,0){\circle*{10}}
\put(135,0){\line  (1,0){30}}
\put(170,0){\circle*{10}}
  \end{picture}
  }
&&\\

 \end{array}$

$\begin{array}{clcl}
2)& \hbox{\unitlength=0.5pt
\begin{picture}(300,60)(-82,-5)
\put(10,0){\circle*{10}}
 
\put(15,0){\line  (1,0){30}}
\put(50,0){\circle*{10}}
\put(50,0){\circle{18}}
\put(55,0){\line  (1,0){30}}
\put(90,0){\circle*{10}}
\put(90,-5){\line  (0,-1){20}}
\put(90,-30){\circle*{10}}
\put(90,-30){\circle{18}}
\put(95,0){\line  (1,0){30}}
\put(130,0){\circle*{10}}
\put(135,0){\line  (1,0){30}}
\put(170,0){\circle*{10}}
  \end{picture}
  }
&&\\

 \end{array}$

$\begin{array}{clcl}
3)& \hbox{\unitlength=0.5pt
\begin{picture}(300,60)(-82,-5)
\put(10,0){\circle*{10}}
\put(10,0){\circle{18}} 
\put(15,0){\line  (1,0){30}}
\put(50,0){\circle*{10}}
 
\put(55,0){\line  (1,0){30}}
\put(90,0){\circle*{10}}
\put(90,-5){\line  (0,-1){20}}
\put(90,-30){\circle*{10}}
\put(90,-30){\circle{18}}
\put(95,0){\line  (1,0){30}}
\put(130,0){\circle*{10}}
\put(130,0){\circle{18}}
\put(135,0){\line  (1,0){30}}
\put(170,0){\circle*{10}}
  \end{picture}
  }
&\supset&
 \hskip -20pt\hbox{\unitlength=0.5pt
\begin{picture}(300,60)(-82,-5)
\put(10,0){\circle*{10}}
\put(10,0){\circle{18}} 
\put(15,0){\line  (1,0){30}}
\put(50,0){\circle*{10}}
 
\put(55,0){\line  (1,0){30}}
\put(90,0){\circle*{10}}

\put(95,0){\line  (1,0){30}}
\put(130,0){\circle*{10}}
\put(130,0){\circle{18}}
 
  \end{picture}
  }\\
  
   \end{array}$

$\begin{array}{clcl}
  4)& \hbox{\unitlength=0.5pt
\begin{picture}(300,60)(-82,-5)
\put(10,0){\circle*{10}}
\put(10,0){\circle{18}} 
\put(15,0){\line  (1,0){30}}
\put(50,0){\circle*{10}}
 
\put(55,0){\line  (1,0){30}}
\put(90,0){\circle*{10}}
\put(90,-5){\line  (0,-1){20}}
\put(90,-30){\circle*{10}}
\put(90,-30){\circle{18}}
\put(95,0){\line  (1,0){30}}
\put(130,0){\circle*{10}}
 
\put(135,0){\line  (1,0){30}}
\put(170,0){\circle*{10}}
\put(170,0){\circle{18}}
  \end{picture}
  }
&\supset&
 \hskip -20pt\hbox{\unitlength=0.5pt
\begin{picture}(300,60)(-82,-5)
\put(10,0){\circle*{10}}
 \put(10,0){\circle{18}}
\put(15,0){\line  (1,0){30}}
\put(50,0){\circle*{10}}
 
\put(55,0){\line  (1,0){30}}
\put(90,0){\circle*{10}}

\put(95,0){\line  (1,0){30}}
\put(130,0){\circle*{10}}
 
\put(135,0){\line  (1,0){30}}
\put(170,0){\circle*{10}}
\put(170,0){\circle{18}}
  \end{picture}
  }\\
  
   \end{array}$

$\begin{array}{clcl}
  5)& \hbox{\unitlength=0.5pt
\begin{picture}(300,60)(-82,-5)
\put(10,0){\circle*{10}}
 
\put(15,0){\line  (1,0){30}}
\put(50,0){\circle*{10}}
 \put(50,0){\circle{18}}
\put(55,0){\line  (1,0){30}}
\put(90,0){\circle*{10}}
\put(90,-5){\line  (0,-1){20}}
\put(90,-30){\circle*{10}}
\put(90,-30){\circle{18}}
\put(95,0){\line  (1,0){30}}
\put(130,0){\circle*{10}}
 \put(130,0){\circle{18}}
\put(135,0){\line  (1,0){30}}
\put(170,0){\circle*{10}}
 
  \end{picture}
  }
&\supset&
 \hskip -20pt\hbox{\unitlength=0.5pt
\begin{picture}(300,60)(-82,-5)
\put(10,0){\circle*{10}}
 
\put(15,0){\line  (1,0){30}}
\put(50,0){\circle*{10}}
 \put(50,0){\circle{18}}
\put(55,0){\line  (1,0){30}}
\put(90,0){\circle*{10}}

   \end{picture}
  }\\
   \end{array}$

$\begin{array}{clcl}
  
    6)& \hbox{\unitlength=0.5pt
\begin{picture}(300,60)(-82,-5)
\put(10,0){\circle*{10}}
 \put(10,0){\circle{18}}
\put(15,0){\line  (1,0){30}}
\put(50,0){\circle*{10}}
 
\put(55,0){\line  (1,0){30}}
\put(90,0){\circle*{10}}
\put(90,0){\circle{18}}
\put(90,-5){\line  (0,-1){20}}
\put(90,-30){\circle*{10}}
 
\put(95,0){\line  (1,0){30}}
\put(130,0){\circle*{10}}
 
\put(135,0){\line  (1,0){30}}
\put(170,0){\circle*{10}}
 
  \end{picture}
  }
&\supset&
 \hskip -20pt\hbox{\unitlength=0.5pt
\begin{picture}(300,60)(-82,-5)

\put(10,0){\circle*{10}}
 
\put(15,0){\line  (1,0){30}}
\put(50,0){\circle*{10}}
\put(50,0){\circle{18}}
\put(50,-5){\line  (0,-1){20}}
\put(50,-30){\circle*{10}}
 
\put(55,0){\line  (1,0){30}}
\put(90,0){\circle*{10}}
 
\put(95,0){\line  (1,0){30}}
\put(130,0){\circle*{10}}
 
  \end{picture}
  }
\\
 \end{array}$

$\begin{array}{clcl}

  7)& \hbox{\unitlength=0.5pt
\begin{picture}(300,60)(-82,-5)
\put(10,0){\circle*{10}}
 \put(10,0){\circle{18}}
\put(15,0){\line  (1,0){30}}
\put(50,0){\circle*{10}}
 
\put(55,0){\line  (1,0){30}}
\put(90,0){\circle*{10}}
 
\put(90,-5){\line  (0,-1){20}}
\put(90,-30){\circle*{10}}
 
\put(95,0){\line  (1,0){30}}
\put(130,0){\circle*{10}}
\put(130,0){\circle{18}} 
\put(135,0){\line  (1,0){30}}
\put(170,0){\circle*{10}}
 
  \end{picture}
  }
&\supset&
 \hskip -20pt\hbox{\unitlength=0.5pt
\begin{picture}(300,60)(-82,-5)

\put(10,0){\circle*{10}}
 
\put(15,0){\line  (1,0){30}}
\put(50,0){\circle*{10}}
 
\put(50,-5){\line  (0,-1){20}}
\put(50,-30){\circle*{10}}
 
\put(55,0){\line  (1,0){30}}
\put(90,0){\circle*{10}}
 \put(90,0){\circle{18}}
\put(95,0){\line  (1,0){30}}
\put(130,0){\circle*{10}}
 
  \end{picture}
  }
\\
 \end{array}$

$\begin{array}{clcl}

  8)& \hbox{\unitlength=0.5pt
\begin{picture}(300,60)(-82,-5)
\put(10,0){\circle*{10}}
 \put(10,0){\circle{18}}
\put(15,0){\line  (1,0){30}}
\put(50,0){\circle*{10}}
 
\put(55,0){\line  (1,0){30}}
\put(90,0){\circle*{10}}
 
\put(90,-5){\line  (0,-1){20}}
\put(90,-30){\circle*{10}}
 
\put(95,0){\line  (1,0){30}}
\put(130,0){\circle*{10}}
 
\put(135,0){\line  (1,0){30}}
\put(170,0){\circle*{10}}
\put(170,0){\circle{18}}
 
  \end{picture}
  }
&\supset&
 \hskip -20pt\hbox{\unitlength=0.5pt
\begin{picture}(300,60)(-82,-5)

\put(10,0){\circle*{10}}
 
\put(15,0){\line  (1,0){30}}
\put(50,0){\circle*{10}}
 
\put(50,-5){\line  (0,-1){20}}
\put(50,-30){\circle*{10}}
 
\put(55,0){\line  (1,0){30}}
\put(90,0){\circle*{10}}
 
\put(95,0){\line  (1,0){30}}
\put(130,0){\circle*{10}}
\put(130,0){\circle{18}}
 
  \end{picture}
  }
\\
 \end{array}$

$\begin{array}{clcl}
  9)& \hbox{\unitlength=0.5pt
\begin{picture}(300,60)(-82,-5)
\put(10,0){\circle*{10}}
 
\put(15,0){\line  (1,0){30}}
\put(50,0){\circle*{10}}
\put(50,0){\circle{18}}
 
\put(55,0){\line  (1,0){30}}
\put(90,0){\circle*{10}}
 
\put(90,-5){\line  (0,-1){20}}
\put(90,-30){\circle*{10}}
 
\put(95,0){\line  (1,0){30}}
\put(130,0){\circle*{10}}
\put(130,0){\circle{18}}
 
\put(135,0){\line  (1,0){30}}
\put(170,0){\circle*{10}}

  \end{picture}
  }
&&
 \\
 \end{array}$

$\begin{array}{clcl}
 10)& \hbox{\unitlength=0.5pt
\begin{picture}(300,60)(-82,-5)
\put(10,0){\circle*{10}}
 \put(10,0){\circle{18}}
\put(15,0){\line  (1,0){30}}
\put(50,0){\circle*{10}}
 
\put(55,0){\line  (1,0){30}}
\put(90,0){\circle*{10}}
\put(90,0){\circle{18}}
\put(90,-5){\line  (0,-1){20}}
\put(90,-30){\circle*{10}}
 
\put(95,0){\line  (1,0){30}}
\put(130,0){\circle*{10}}
 
\put(135,0){\line  (1,0){30}}
\put(170,0){\circle*{10}}
\put(170,0){\circle{18}}
 
  \end{picture}
  }
&\supset&
 \hskip -20pt\hbox{\unitlength=0.5pt
\begin{picture}(300,60)(-82,-5)

\put(10,0){\circle*{10}}
 
\put(15,0){\line  (1,0){30}}
\put(50,0){\circle*{10}}
\put(50,0){\circle{18}} 
\put(50,-5){\line  (0,-1){20}}
\put(50,-30){\circle*{10}}
 
\put(55,0){\line  (1,0){30}}
\put(90,0){\circle*{10}}

  \end{picture}
  }
\\

\end{array}$
\vskip 20pt
Let us consider the case $1)$ in the list above. The corresponding $PV$ is infinitesimally equivalent to $(G,V)$ where $G=GL(5)\times {\bb C}^*$, $V=M(5,1)\oplus Skew(5)$ and the action is given by:
$(g,a)(X,Y)=(agX,gY\,^t\hskip -2pt g)$  where $\, a\in {\bb C}^*, g\in GL(5), X\in M(5,1), Y\in Skew(5)$.

Define $J=\left[\hskip -5pt\begin{array}{cc}0&I_{2}\\
  -I_{2}&0
  \end{array}\hskip -5pt\right]\in Skew(4)$. Let then 
  $$X_{0}=\left[\hskip -5pt\begin{array}{c}0\\
  1
  \end{array}\hskip -5pt\right] \in M(5,1) \text{ and } Y_{0}=\left[\hskip -5pt\begin{array}{cc}J&0\\
  0&0
  \end{array}\hskip -5pt\right]\in Skew(5).$$
  An easy computation shows that $(X_{0},Y_{0})$ is generic and that its isotropy subgroup is the set of pairs of matrices of the form
  $(\left[\hskip -5pt\begin{array}{cc}A&0\\
  0&a
  \end{array}\hskip -5pt\right], a^{-1})$, where $A\in Sp(2), a\in {\bb C}^*$. Hence the $PV$ is regular and one easily shows that the unique fundamental relative invariant is given by    
  $$f(X,Y)=Pf(\left[\hskip -5pt\begin{array}{cc}Y&X\\
  -(^t\hskip -2pt X)&0
  \end{array}\hskip -5pt\right]).$$
  
  For the cases $2)$ and $9)$ one computes a generic isotropy isotropy subgroup and one observes that it is not reductive.

 \end{proof}
 
 A consequence of the preceding classification is the following statement.
 
 \begin{prop}\label{prop-Q=1} The $Q$-irreducible $PV$'s of parabolic type   are $1$-irreducible. In other words the three definitions of irreducibility given in Definition \ref{def-irreductibles} are equivalent for $PV$'s of parabolic type. 
 
  \end{prop}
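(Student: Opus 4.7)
By Proposition~\ref{prop-irreductibilite-implications}, the chain of implications
$$(G,V) \text{ is } 1\text{-irreducible}\ \Longrightarrow\ (G,V) \text{ is } 2\text{-irreducible}\ \Longrightarrow\ (G,V) \text{ is } Q\text{-irreducible}$$
holds with no additional hypothesis. To prove Proposition~\ref{prop-Q=1} it is therefore enough to establish the converse implication $Q$-irreducible $\Rightarrow$ $1$-irreducible for $PV$'s of parabolic type. If $({\go l}_\theta,d_1(\theta))$ is in addition irreducible in the usual sense, this is exactly the remark recorded after Definition~\ref{def-irreductibles}: an irreducible regular $PV$ admits at most one fundamental relative invariant. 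So we may assume $({\go l}_\theta,d_1(\theta))$ is reducible, and then the classification Theorem~\ref{th-Q-irred} reduces the problem to checking each entry of the finite Table~1 separately.

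The plan is to use the following well-known criterion. Let $G_1$ denote the subgroup of $L_\theta$ generated by a generic isotropy and by the derived group $L_\theta'$. Then the rational characters of $L_\theta$ that are trivial on $G_1$ are precisely the characters of relative invariants, and the number of fundamental relative invariants equals $\dim(L_\theta/G_1)$. Since $L_\theta=L_\theta'\oplus {\go h}_\theta$, this number is bounded above by the number of circled roots; the claim needed is that in every entry of Table~1 it equals exactly one, so that $({\go l}_\theta,d_1(\theta))$ has a unique fundamental relative invariant and is $1$-irreducible.

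Most of the work has already been done inside the proof of Theorem~\ref{th-Q-irred}: in the $Q$-irreducible cases of Lemmas~\ref{lemme-GLXGLXG}(2), \ref{lemme-GLXGL-in-D(1)}(1) and \ref{lemme-GLXGL-in-D(2)}(1) an explicit generic isotropy was produced, and it was observed that $L_\theta/G_1$ is one-dimensional, with explicit fundamental relative invariants $\det(YX)$, $Pf({}^tX\!.\!Y\!.\!X)$ and $\det(YX)$ respectively. The $E_6$ entry was treated in the same spirit at the end of the proof of Theorem~\ref{th-Q-irred}. For the remaining exceptional entries (the surviving diagrams in $E_7,E_8,F_4,G_2$ left as ``analogous'' in the proof of Theorem~\ref{th-Q-irred}), one performs exactly the same computation on a convenient matrix model of $({\go l}_\theta,d_1(\theta))$: write down a candidate generic element, compute its isotropy, verify reductivity (hence regularity, via Proposition~\ref{prop.PV-regulier-non-connexe}) and check that $\dim(L_\theta/G_1)=1$. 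The main obstacle is thus not conceptual but bookkeeping: one must handle each surviving exceptional diagram by hand. This is tractable because Lemma~\ref{lemme-arete-gras} already forbids adjacent circled roots and the presence of any regular proper subdiagram eliminates a candidate, leaving only a very short list of diagrams to inspect.
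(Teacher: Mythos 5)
Your proposal is correct and follows essentially the same route as the paper: the paper states this Proposition as "a consequence of the preceding classification," meaning exactly that one inspects Theorem~\ref{th-Q-irred} and observes that in each surviving case the generic isotropy computation already carried out there (via the subgroup $G_1$ generated by a generic isotropy and the derived group, with $\dim(L_\theta/G_1)=1$) yields a unique fundamental relative invariant. Your write-up merely makes explicit the reduction to Table~1 and the character-counting criterion that the paper uses implicitly inside Lemmas~\ref{lemme-GLXGLXG}, \ref{lemme-GLXGL-in-D(1)}, \ref{lemme-GLXGL-in-D(2)} and the $E_6$ computation.
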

 
\begin{rem} The exceptional $Q$-irreducible $PV$'s arising in $E_{6}$, $E_{7}$ and $E_{8}$ are particular cases of families of $Q$-irreducible $PV$'s which are not parabolic in general. More precisely the representations
 $$(GL(n)\times {\bb C}^*, [\Lambda_{1}(GL(n))\otimes \Box]\oplus [\Lambda_{2}(GL(n))\otimes Id]) (n \text{ odd})  $$
 and
$$(GL(n)\times GL(n-1), [\Lambda_{1}(GL(n))\otimes \Lambda_{1}(GL(n-1))]\oplus [\Lambda_{1}(GL(n))\otimes Id])$$
are $1$-irreducible $PV$'s. (Here $\Box$ denotes the one dimensional representation of ${\bb C}^*$ on ${\bb C}$ by multiplications).
The first representation is an extension of the $E_{6}$ and $E_{8}$ cases, the second one is an extension of the $E_{7}$ case. 

For  the first representation the fundamental relative invariant is given by 
$$f(X,Y)=Pf(\left[\hskip -5pt\begin{array}{cc}Y&X\\
  -(^t\hskip -2pt X)&0
  \end{array}\hskip -5pt\right]), (X\in M(n,1), Y\in Skew(n))$$
  and for the second representation  it is given by
  $$f(X,Y)=\det(\left[\hskip -5pt\begin{array}{cc}X&Y\\
   
  \end{array}\hskip -5pt\right]), (X\in M(n,n-1), Y\in M(n,1)).$$
  
  Note that the first $PV$ above is example 8) p. 95 of \cite{Kimura-2} and that the second $PV$ is 
  
  \begin{rem}\footnote{I would like to thank Tatsuo Kimura for providing me  this example.}
 Let $G= (GL(n))^{p+1} $, and let $V=(M(n))^p$. We denote by $g=(g_{i})$ an element in $G$ and by $v=(v_{j})$ an element in $V$. Consider the representation of $G$ on $V$ defined by $(g_{i}).(v_{j})= (g_{j}v_{j}g_{j+1}^{-1})$. Then $(G,V)$ is a regular $PV$ with $p$ fundamental relative invariants given by $f_{j}(v)=\det(v_{j})$. Let $N= pn^2=\dim(V)$ and consider the castling transformation (see \cite{Sato-Kimura} or \cite{Kimura}) of $(G,V)$ given by  $(G\times GL(N-1),V\otimes \C^{N-1})$. It is known (\cite{Sato-Kimura} p.67-68, and Remark 26 p. 73), that the regularity and the number of fundamental relative invariants does not change under castling transformation, therefore $(G\times GL(N-1), V\otimes \C^{N-1})$ is regular and has also $p$ fundamental relative invariants. But it is easy to see that any proper  $ G\times GL(N-1)$-invariant subspace of $V\otimes \C^{N-1}$ is of the form $U\otimes \C^{N-1}$, with $\dim(U)< N-1$, and then $(G\times GL(N-1),U\otimes \C^{N-1})$ is a so-called  {\it trivial} $PV$, which has no fundamental relative invariant. Hence  $(G\times GL(N-1),V\otimes \C^{N-1})$ is a $Q$-irreducible $PV$ which is not $1$-irreducible. Therefore Proposition \ref{prop-Q=1} is no longer true for non parabolic $PV$'s.
  
  \end{rem}

  \begin{rem} We have proved in \cite{rub-note-PV} that an irreducible $PV$ of parabolic type is regular if and only if the corresponding grading element $H_{\theta}$ (see section 4.1) is the semi-simple element of an ${\go {sl}}_{2}$-triple. As the weighted Dynkin diagrams of type   $E_{6}$, $E_{7}$,  $E_{8}$  in Table 1 below do not appear  in tables 18, 19, 20 of \cite{Dynkin}, such a result is no longer true for $Q$-irreducible $PV$'s of parabolic type.
  
  \end{rem}

\end{rem}

\vfill\eject

\begin{tabular} {|c|c|}
\hline
\multicolumn{2}{|c|}{}\\
\multicolumn{2}{|c|}{ \rm Table 1: non irreducible, $Q$-irreducible $PV$'s of parabolic type} \\
\multicolumn{2}{|c|}{}\\
 \hline
 {$\begin{array}{c}\\ A_{n}\\ (p_{2}>p_{1}\geq0)\end{array}$}&\hskip -40pt \hbox to 9cm{\unitlength=0.5pt
  \begin{picture}(550,-20)
 \put(90,10){\circle*{10}}
 \put(95,10){\line (1,0){30}}
\put(130,10){\circle*{10}}
 
\put(140,10){\circle*{1}}
\put(145,10){\circle*{1}}
\put(150,10){\circle*{1}}
\put(155,10){\circle*{1}}
\put(160,10){\circle*{1}}
\put(165,10){\circle*{1}}
\put(170,10){\circle*{1}}
\put(175,10){\circle*{1}}
\put(180,10){\circle*{1}}
 \put(190,10){\circle*{10}}
 \put(155,-10){$p_{1}$}
 
\put(195,10){\line (1,0){30}}
\put(235,10){\circle*{10}}
 
\put(235,10){\circle{18}}
\put(245,10){\line (1,0){30}}
\put(280,10){\circle*{10}}
 
\put(290,10){\circle*{1}}
\put(295,10){\circle*{1}}
\put(300,10){\circle*{1}}
\put(305,10){\circle*{1}}
\put(310,10){\circle*{1}}
\put(315,10){\circle*{1}}
\put(320,10){\circle*{1}}
\put(325,10){\circle*{1}}
\put(330,10){\circle*{1}}
\put(305,-10){$p_{2}$}
\put(340,10){\circle*{10}}
 
\put(345,10){\line (1,0){30}}
\put(385,10){\circle*{10}}
\put(385,10){\circle{18}}
\put(395,10){\line (1,0){30}}
\put(430,10){\circle*{10}}

\put(435,10){\circle*{1}}
\put(440,10){\circle*{1}}
\put(445,10){\circle*{1}}
\put(450,10){\circle*{1}}
\put(455,10){\circle*{1}}
\put(460,10){\circle*{1}}
\put(465,10){\circle*{1}}
\put(470,10){\circle*{1}}
\put(475,10){\circle*{1}}
\put(485,10){\circle*{10}}
\put(490,10){\line (1,0){30}}
\put(525,10){\circle*{10}}
 \put(450,-10){$p_{1}$}
  \end{picture}
}\\
\hline
 {$\begin{array}{c}\\B_{n}\\ p_{2}>p_{1},\\ 2p_{3}=p_{1}, p_{3}\geq0)\end{array}$}&\hskip -40pt \hbox to 9,5cm {\unitlength=0.5pt
  \begin{picture}(550,-20)
 \put(90,10){\circle*{10}}
 \put(95,10){\line (1,0){30}}
\put(130,10){\circle*{10}}
 
\put(140,10){\circle*{1}}
\put(145,10){\circle*{1}}
\put(150,10){\circle*{1}}
\put(155,10){\circle*{1}}
\put(160,10){\circle*{1}}
\put(165,10){\circle*{1}}
\put(170,10){\circle*{1}}
\put(175,10){\circle*{1}}
\put(180,10){\circle*{1}}
 \put(190,10){\circle*{10}}
 \put(155,-10){$p_{1}$}
 
\put(195,10){\line (1,0){30}}
\put(235,10){\circle*{10}}
 
\put(235,10){\circle{18}}
\put(245,10){\line (1,0){30}}
\put(280,10){\circle*{10}}
 
\put(290,10){\circle*{1}}
\put(295,10){\circle*{1}}
\put(300,10){\circle*{1}}
\put(305,10){\circle*{1}}
\put(310,10){\circle*{1}}
\put(315,10){\circle*{1}}
\put(320,10){\circle*{1}}
\put(325,10){\circle*{1}}
\put(330,10){\circle*{1}}
\put(305,-10){$p_{2}$}
\put(340,10){\circle*{10}}
 
\put(345,10){\line (1,0){30}}
\put(385,10){\circle*{10}}
\put(385,10){\circle{18}}
\put(395,10){\line (1,0){30}}
\put(430,10){\circle*{10}}

\put(435,10){\circle*{1}}
\put(440,10){\circle*{1}}
\put(445,10){\circle*{1}}
\put(450,10){\circle*{1}}
\put(455,10){\circle*{1}}
\put(460,10){\circle*{1}}
\put(465,10){\circle*{1}}
\put(470,10){\circle*{1}}
\put(475,10){\circle*{1}}
\put(485,10){\circle*{10}}
\put(484,12){\line (1,0){41}}
\put(484,8){\line (1,0){41}}
\put(500,5.5){$>$}
\put(530,10){\circle*{10}}
 
 \put(450,-10){$p_{3}$}
 
 \end{picture}
}\\
\hline
 {$\begin{array}{c}\\C_{n}\\ p_{2}>p_{1},\\ 2p_{3}=p_{1}+1, p_{3}>0\\
 p_{2}\text{ odd}\end{array}$}& \hskip -40pt\hbox to 9,5cm {\unitlength=0.5pt
  \begin{picture}(550,-20)
 \put(90,10){\circle*{10}}
 \put(95,10){\line (1,0){30}}
\put(130,10){\circle*{10}}
 
\put(140,10){\circle*{1}}
\put(145,10){\circle*{1}}
\put(150,10){\circle*{1}}
\put(155,10){\circle*{1}}
\put(160,10){\circle*{1}}
\put(165,10){\circle*{1}}
\put(170,10){\circle*{1}}
\put(175,10){\circle*{1}}
\put(180,10){\circle*{1}}
 \put(190,10){\circle*{10}}
 \put(155,-10){$p_{1}$}
 
\put(195,10){\line (1,0){30}}
\put(235,10){\circle*{10}}
 
\put(235,10){\circle{18}}
\put(245,10){\line (1,0){30}}
\put(280,10){\circle*{10}}
 
\put(290,10){\circle*{1}}
\put(295,10){\circle*{1}}
\put(300,10){\circle*{1}}
\put(305,10){\circle*{1}}
\put(310,10){\circle*{1}}
\put(315,10){\circle*{1}}
\put(320,10){\circle*{1}}
\put(325,10){\circle*{1}}
\put(330,10){\circle*{1}}
\put(305,-10){$p_{2}$}
\put(340,10){\circle*{10}}
 
\put(345,10){\line (1,0){30}}
\put(385,10){\circle*{10}}
\put(385,10){\circle{18}}
\put(395,10){\line (1,0){30}}
\put(430,10){\circle*{10}}

\put(435,10){\circle*{1}}
\put(440,10){\circle*{1}}
\put(445,10){\circle*{1}}
\put(450,10){\circle*{1}}
\put(455,10){\circle*{1}}
\put(460,10){\circle*{1}}
\put(465,10){\circle*{1}}
\put(470,10){\circle*{1}}
\put(475,10){\circle*{1}}
\put(485,10){\circle*{10}}
\put(484,12){\line (1,0){41}}
\put(484,8){\line (1,0){41}}
\put(500,5.5){$<$}
\put(530,10){\circle*{10}}
 
 \put(450,-10){$p_{3}$}
 
 \end{picture}
}\\
\hline
  {$\begin{array}{c}\\D_{n}^1\\ p_{2}>p_{1},\\ 2p_{3}=p_{1}+1, p_{3}\geq 2\\
 p_{2}\text{ even}\end{array}$}& \hskip -40pt\hbox to 9cm {\unitlength=0.5pt
  \begin{picture}(550,-20)
 \put(90,10){\circle*{10}}
 \put(95,10){\line (1,0){30}}
\put(130,10){\circle*{10}}
 
\put(140,10){\circle*{1}}
\put(145,10){\circle*{1}}
\put(150,10){\circle*{1}}
\put(155,10){\circle*{1}}
\put(160,10){\circle*{1}}
\put(165,10){\circle*{1}}
\put(170,10){\circle*{1}}
\put(175,10){\circle*{1}}
\put(180,10){\circle*{1}}
 \put(190,10){\circle*{10}}
 \put(155,-10){$p_{1}$}
 
\put(195,10){\line (1,0){30}}
\put(235,10){\circle*{10}}
 
\put(235,10){\circle{18}}
\put(245,10){\line (1,0){30}}
\put(280,10){\circle*{10}}
 
\put(290,10){\circle*{1}}
\put(295,10){\circle*{1}}
\put(300,10){\circle*{1}}
\put(305,10){\circle*{1}}
\put(310,10){\circle*{1}}
\put(315,10){\circle*{1}}
\put(320,10){\circle*{1}}
\put(325,10){\circle*{1}}
\put(330,10){\circle*{1}}
\put(305,-10){$p_{2}$}
\put(340,10){\circle*{10}}
 
\put(345,10){\line (1,0){30}}
\put(385,10){\circle*{10}}
\put(385,10){\circle{18}}
\put(395,10){\line (1,0){30}}
\put(430,10){\circle*{10}}

\put(435,10){\circle*{1}}
\put(440,10){\circle*{1}}
\put(445,10){\circle*{1}}
\put(450,10){\circle*{1}}
\put(455,10){\circle*{1}}
\put(460,10){\circle*{1}}
\put(465,10){\circle*{1}}
\put(470,10){\circle*{1}}
\put(475,10){\circle*{1}}
\put(485,10){\circle*{10}}
\put(490,14){\line (1,1){20}}
\put(515,36){\circle*{10}}
\put(490,6){\line (1,-1){20}}
\put(515,-16){\circle*{10}}

 \put(450,-10){$p_{3}$}
 
 \end{picture}
 }\\
 \hline
  {$\begin{array}{c}\\D_{n}^2\\ p_{2}\geq 2,\\ p_{1}=p_{2}-1, p_{2} \text{ even}\end{array}$}& \hskip -40pt\hbox to 9cm {\unitlength=0.5pt
  \begin{picture}(550,-20)
 \put(90,10){\circle*{10}}
 \put(95,10){\line (1,0){30}}
\put(130,10){\circle*{10}}
 
\put(140,10){\circle*{1}}
\put(145,10){\circle*{1}}
\put(150,10){\circle*{1}}
\put(155,10){\circle*{1}}
\put(160,10){\circle*{1}}
\put(165,10){\circle*{1}}
\put(170,10){\circle*{1}}
\put(175,10){\circle*{1}}
\put(180,10){\circle*{1}}
 \put(190,10){\circle*{10}}
 \put(155,-10){$p_{1}$}
 
\put(195,10){\line (1,0){30}}
\put(235,10){\circle*{10}}
 
\put(235,10){\circle{18}}
\put(245,10){\line (1,0){30}}
\put(280,10){\circle*{10}}
 
\put(290,10){\circle*{1}}
\put(295,10){\circle*{1}}
\put(300,10){\circle*{1}}
\put(305,10){\circle*{1}}
\put(310,10){\circle*{1}}
\put(315,10){\circle*{1}}
\put(320,10){\circle*{1}}
\put(325,10){\circle*{1}}
\put(330,10){\circle*{1}}
\put(380,-10){$p_{2}$}
\put(340,10){\circle*{10}}
 
\put(345,10){\line (1,0){34}}
\put(385,10){\circle*{10}}
 
\put(390,10){\line (1,0){34}}
\put(430,10){\circle*{10}}

\put(435,10){\circle*{1}}
\put(440,10){\circle*{1}}
\put(445,10){\circle*{1}}
\put(450,10){\circle*{1}}
\put(455,10){\circle*{1}}
\put(460,10){\circle*{1}}
\put(465,10){\circle*{1}}
\put(470,10){\circle*{1}}
\put(475,10){\circle*{1}}
\put(485,10){\circle*{10}}
\put(490,14){\line (1,1){20}}
\put(515,36){\circle*{10}}
\put(515,36){\circle{18}}
\put(490,6){\line (1,-1){20}}
\put(515,-16){\circle*{10}}
\end{picture}
}\\
\hline
  {$ \begin{array}{c}
\\ D_{n}^3\\ p_{2}> 1\\
\\
 \end{array}$}& \hskip -40pt \hbox to 9cm {\unitlength=0.5pt
\begin{picture}(550,-20)
  \put(90,10){\circle*{10}}
 \put(95,10){\line (1,0){30}}
\put(133,10){\circle*{10}}
\put(133,10){\circle{18}}
 \put(140,10){\line (1,0){40}}
  \put(185,10){\circle*{10}}
\put(190,10){\line (1,0){40}}
\put(235,10){\circle*{10}}
\put(240,10){\line (1,0){35}}
\put(280,10){\circle*{10}}
\put(290,10){\circle*{1}}
\put(295,10){\circle*{1}}
\put(300,10){\circle*{1}}
\put(305,10){\circle*{1}}
\put(310,10){\circle*{1}}
\put(315,10){\circle*{1}}
\put(320,10){\circle*{1}}
\put(325,10){\circle*{1}}
\put(330,10){\circle*{1}}
\put(350,-10){$p_{2}$}
\put(340,10){\circle*{10}}
\put(345,10){\line (1,0){34}}
\put(385,10){\circle*{10}}
\put(390,10){\line (1,0){34}}
\put(430,10){\circle*{10}}
\put(435,10){\circle*{1}}
\put(440,10){\circle*{1}}
\put(445,10){\circle*{1}}
\put(450,10){\circle*{1}}
\put(455,10){\circle*{1}}
\put(460,10){\circle*{1}}
\put(465,10){\circle*{1}}
\put(470,10){\circle*{1}}
\put(475,10){\circle*{1}}
\put(485,10){\circle*{10}}
\put(490,14){\line (1,1){20}}
\put(515,36){\circle*{10}}
\put(515,36){\circle{18}}
\put(490,6){\line (1,-1){20}}
\put(515,-16){\circle*{10}}
\put(515,-16){\circle{18}}
\end{picture}
}\\
\hline
$\begin{array}{c}E_{6}\\
{}
\end{array}$& \hbox{\unitlength=0.5pt
\begin{picture}(300,60)(-82,-43)
\put(10,0){\circle*{10}}
\put(10,0){\circle{18}}
\put(15,0){\line  (1,0){30}}
\put(50,0){\circle*{10}}
\put(55,0){\line  (1,0){30}}
\put(90,0){\circle*{10}}
\put(90,-5){\line  (0,-1){30}}
\put(90,-40){\circle*{10}}
\put(90,-40){\circle{18}}
\put(95,0){\line  (1,0){30}}
\put(130,0){\circle*{10}}
\put(135,0){\line  (1,0){30}}
\put(170,0){\circle*{10}}
  
\end{picture}
}
 \\
 \hline
 $\begin{array}{c}E_{7}\\
{}
\end{array}$& \hbox{\unitlength=0.5pt
\begin{picture}(300,60)(-82,-43)
\put(10,0){\circle*{10}}
 
\put(15,0){\line  (1,0){30}}
\put(50,0){\circle*{10}}
\put(55,0){\line  (1,0){30}}
\put(90,0){\circle*{10}}
\put(90,-5){\line  (0,-1){30}}
\put(90,-40){\circle*{10}}
\put(90,-40){\circle{18}}
\put(95,0){\line  (1,0){30}}
\put(130,0){\circle*{10}}
\put(130,0){\circle{18}}
\put(135,0){\line  (1,0){30}}
\put(170,0){\circle*{10}}
\put(175,0){\line  (1,0){30}}
\put(210,0){\circle*{10}}

\end{picture}
}
 \\
 \hline
  $\begin{array}{c}E_{8}\\
{}
\end{array}$& \hbox{\unitlength=0.5pt
\begin{picture}(300,60)(-82,-43)
\put(10,0){\circle*{10}}
\put(10,0){\circle{18}}

\put(15,0){\line  (1,0){30}}
\put(50,0){\circle*{10}}
\put(55,0){\line  (1,0){30}}
\put(90,0){\circle*{10}}
\put(90,-5){\line  (0,-1){30}}
\put(90,-40){\circle*{10}}
\put(90,-40){\circle{18}}
\put(95,0){\line  (1,0){30}}
\put(130,0){\circle*{10}}
 
\put(135,0){\line  (1,0){30}}
\put(170,0){\circle*{10}}
\put(175,0){\line  (1,0){30}}
\put(210,0){\circle*{10}}
\put(215,0){\line  (1,0){30}}
\put(250,0){\circle*{10}}

\end{picture}
}
 \\
 \hline

\end{tabular}


\end{document}